\theoremstyle{plain}
\newtheorem{theorem}{Theorem}[section]
\newtheorem{corollary}{Corollary}[section]
\newtheorem{lemma}{Lemma}[section]
\theoremstyle{remark}
\newtheorem{definition}{Definition}[section]
\newtheorem{remark}{Remark}[section]
\newtheorem{examples}{Examples}[section]
\DeclareMathOperator*{\esssup}{ess\,sup}
\DeclareMathOperator{\dist}{dist}
\DeclareMathOperator{\supp}{supp}
\DeclareMathOperator{\clos}{clos}
\DeclareMathOperator{\lin}{span}
\newcommand{\round}[1]{{\ooalign{\hfil\raise .10ex\hbox{\scriptsize#1}\hfil\crcr\mathhexbox20D}}}
\newcommand{\ff}{\mathcal{F}}
\newcommand{\eng}{\mathcal{E}}
\newcommand{\cal}[1]{\ensuremath{\mathcal{#1}}}
\newcommand{\loc}{\text{loc}}
\title{Metrics and spectral triples for Dirichlet and resistance forms}
\author[Hinz]{Michael Hinz$^{1,2}$}
\address{Department of Mathematics, Bielefeld University, Postfach 100131, 33501 Bielefeld, Germany}
\email{mhinz@math.uni-bielefeld.de}
\thanks{$^1$Research supported in part by the Alexander von Humboldt Foundation Feodor (Lynen Research Fellowship Program)}
\author[Kelleher]{Daniel J. Kelleher$^2$}
\address{Department of Mathematics, University of Connecticut, Storrs, CT 06269-3009 USA}
\email{daniel.kelleher@uconn.edu}
\thanks{$^2$Research supported in part by NSF grant DMS-0505622}
\author[Teplyaev]{Alexander Teplyaev$^2$}
\address{Department of Mathematics, University of Connecticut, Storrs, CT 06269-3009 USA}
\email{alexander.teplyaev@uconn.edu}
\begin{document}

\begin{abstract}
The article deals with intrinsic metrics, Dirac operators and spectral triples induced by regular Dirichlet and resistance forms. 
We show, in particular, that if a local resistance form is given and the space is compact in resistance metric, then the intrinsic metric yields a geodesic space. Given a regular Dirichlet form, we consider Dirac operators within the framework of differential $1$-forms proposed by Cipriani and Sauvageot, and comment on its spectral properties. 
If the Dirichlet form admits a carr\'e operator and the generator has discrete spectrum, then we can construct a related spectral triple, and in the compact and strongly local case the associated Connes distance coincides with the intrinsic metric. We finally give a description of the intrinsic metric in terms of vector fields.
\tableofcontents
\end{abstract}
\keywords{intrinsic metric, Dirac operator, spectral triple, vector fields and differential $1$-forms for Dirichlet and resistance forms,  Connes distance} 
\subjclass[2010]{Primary: 81Q35; secondary: 28A80, 31C25, 46L87, 53C23, 60J25.}
\maketitle

\section{Introduction}

In this article we study intrinsic metrics, Dirac operators and spectral triples associated with Dirichlet and resistance forms. A regular symmetric Dirichlet form on a locally compact space $X$ allows the localization of energy by means of energy measures $\Gamma(f)$, $f\in\mathcal{F}$, in the sense of Fukushima \cite{FOT94} and LeJan \cite{LJ78}. These energy measures may or may not be absolutely continuous with respect to the given reference measure, but it is always possible to find measures $m$ that are energy dominant, i.e. such that the energy measure $\Gamma(f)$ of every function $f\in\mathcal{F}$ is absolutely continuous with respect to $m$. 
If the given Dirichlet form is strongly local, \cite[Section 3.2]{FOT94}, then for any energy dominant measure $m$ we can consider an intrinsic distance $d_{\Gamma,m}$ that generalizes the classical expression
\[d(x,y)=\sup\left\lbrace f(x)-f(y)\ | \ f\in C^1 \text{ such that } |\nabla f|\leq 1\right\rbrace.\]
The definition of $d_{\Gamma,m}$ depends on the choice of $m$, and in general different measures $m$ will lead to different metrics. The first papers that studied intrinsic metrics induced by strongly local regular Dirichlet forms on locally compact spaces were \cite{BM95, Stu94, Stu95}, 
and 
more recent references are \cite{BdMLSt, KZh, Sto10}. Intrinsic metrics for non-local forms have been studied for the first time in \cite{FLW}, where in particular a Rademacher type theorem is proved for general regular Dirichlet forms, \cite[Theorem 4.9]{FLW}. Intrinsic metrics in infinite dimensional situations are considered in \cite{Hi03}.
A typical question is whether a locally compact space $X$ equipped with the intrinsic metric coming from a strongly local Dirichlet form is geodesic or at least a length space, i.e. such that the intrinsic metric coincides with the shortest path metric. Some of the corresponding results of Sturm \cite{Stu94, Stu95} have later been simplified by Stollmann, \cite{Sto10}. In these references it is assumed that the original reference measure is energy dominant and that the topology induced by the intrinsic metric coincides with the original topology. Under these assumptions the space, equipped with the intrinsic metric, is a length space, \cite{Sto10}. The same arguments allow to prove this result also for intrinsic metrics $d_{\Gamma,m}$ with respect to an arbitrary energy dominant measure $m$. The question whether or not the topology induced by $d_{\Gamma,m}$ coincides with the original topology on $X$ is known to be characterized by a compact embedding of a ball of Lipschitz functions into the space of continuous functions, see Theorems \ref{T:general} and \ref{T:converse} below. This result is not new, in more abstract context it has been shown by Rieffel \cite{R98}, Pavlovi\'c \cite{Pa98} and Latr\'emoli\`ere \cite{La07,Lat}. For expository reasons we quote a version in the language of Dirichlet forms.

The second setup we investigate is that of resistance forms $(\mathcal{E},\mathcal{F})$ in the sense of Kigami \cite{Kig01, Kig03, Kig12}. One of the most prominent examples for a resistance form is the standard energy form on the Sierpinski gasket, see for instance \cite{Kig93, Kig01}. Neither a topology nor a measure are needed to define a resistance form on a set $X$, and every resistance form determines a metric $d_R$ on $X$, the so-called resistance metric. A resistance form gives rise to a Dirichlet form in the sense of \cite{FOT94} if a suitable reference measure $m$ is specified, and under some conditions the resulting form will be regular (with respect to the topology induced by $d_R$). If $(X, d_R)$ is compact, the Dirichlet form will always be regular. In this case we may proceed as before and consider the intrinsic metric $d_{\Gamma, m}$. It turns out that in the local case the space $(X, d_{\Gamma,m})$ is always a length space. To our knowledge this result is new. Since compactness in $d_R$ implies compactness in $d_{\Gamma,m}$ any such space is even geodesic, i.e. any two points $x$ and $y$ can be joined by a path of length $d_{\Gamma,m}(x,y)$. We also give an example for a space $X$ that is compact in intrinsic 
metric$d_{\Gamma,m}$, but non-compact in the resistance metric~$d_R$. A~very special situation arises for resistance forms on dendrites (topological trees), \cite{Kig95}. If the dendrite is compact in resistance metric, then any intrinsic metric will itself be a resistance metric.

Another question we are interested in is the existence of Dirac operators and spectral triples associated with Dirichlet forms. In noncommutative geometry spectral triples are used to encode geometric information \cite{C94, GVF}. Recently several authors have begun to discuss spectral triples for fractals, \cite{CIL, CIS, GI03}. It would be interesting to see how these objects are related to recent research in mathematical physics on fractals, see for instance \cite{ADT1, ADT2}. In \cite{CGIS12} the authors investigate spectral triples for the Sierpinski gasket. They consider a parametrized family of spectral triples associated with non-local operators, first on circles and later on the gasket. The singularity of the energy with respect to the self-similar Hausdorff measure is described in terms of an energy dimension, defined by a trace formula, that differers from the Hausdorff dimension. In \cite{PB} spectral triples on ultrametric spaces are constructed. There the authors represent the ultrametric space in terms of a uniquely associated tree and base their construction on a notion of choices. Later on, they use Dirac operators, traces and integration over the space of choices to define associated Dirichlet forms. Further results related to spectral triples can be found in \cite{BMR}, where dynamical systems are studied and \cite{PaB}, where tiling spaces are investigated.

At first, 
we take a rather abstract point of view upon the existence of Dirac operators and spectral triples, as proposed by Cipriani and Sauvageot in \cite{CS03, CS09}. Given a regular symmetric Dirichlet form $(\mathcal{E},\mathcal{F})$ we follow this approach to a first order calculus  and consider a Hilbert space $\mathcal{H}$ of $L_2$-differential $1$-forms and a first order derivation $\partial$ associated with  $(\mathcal{E},\mathcal{F})$. Typical examples arise from Dirichlet forms on fractals such as Sierpinski carpets or gaskets, others from (relativistic) Schr\"odinger operators or Schr\"odinger operators associated with L\'evy processes. 
We recall the definition of a related Dirac operator $\mathbb{D}$ from \cite{HTb}, and for the special case that the generator $L$ of $(\mathcal{E},\mathcal{F})$ has pure point spectrum we describe the spectral representation for $\mathbb{D}$. 

Our interest in pure point spectrum arises, in particular, 
from its appearance in the study  of Laplacians  
on finitely ramified symmetric fractals and related graphs, see \cite{BK,MT,Sab,T98}. These studies are influenced by the mathematical theory of Anderson localization for random Hamiltonians, 
see \cite{AM,CL,SW} and references therein. It is an interesting question, which we do not address, whether  our results can be related to the pure point spectrum of random orthogonal
polynomials on the circle (see \cite{T3,GT,S-OPUC} and references therein) and the quasi-circles considered in \cite{C94}.

To consider spectral triples we partially follow the definition used in \cite{CGIS12}, it differs slightly from the classical one, cf. \cite{GVF}. This seems reasonable, because we are particularly interested in spectral triples on fractal spaces, and the latter may roughly speaking have an infinite dimensional first cohomology. A precise statement can be found in \cite[Theorem 3.9]{CGIS13}, see also \cite{IRT}.
If the original reference measure itself is energy dominant for $(\mathcal{E},\mathcal{F})$ and the generator $L$ has discrete spectrum, then we can verify the existence of a spectral triple $(\mathbb{A},\mathbb{H},\mathbb{D})$ for the $C^\ast$-algebra $\mathbb{A}$ obtained as the uniform closure of the collection $\mathbb{A}_0$ of continuous compactly supported functions of finite energy that have essentially bounded energy densities. See formula (\ref{E:A}) and Theorem \ref{T:triple}. To verify this result we make use of a direct integral representation for $\mathcal{H}$ from \cite{HRT}, stated in Theorem \ref{T:measurablefield}. In the strongly local case and under some conditions (for instance if $X$ is compact), a related Connes distance $d_{\mathbb{D}}$ on $X$ coincides with the intrinsic metric $d_{\Gamma,m}$, see Theorem \ref{T:Connesmetric}.

We would like to point out that our spectral triples differ from the spectral triples on Cantor sets considered in \cite{PB}. More precisely, the authors in \cite{PB} first construct a spectral triple and then derive a regular Dirichlet form from this spectral triple. If our method is applied to this regular Dirichlet form then one obtains a spectral triple that is different from the one they started with.

As a last item, we provide  a description of the intrinsic metric $d_{\Gamma,m}$ in terms of bounded vector fields. 
This resembles the situation in sub-Riemannian geometry. 
Our interest in vector fields arises, in particular, 
from the study of gradients on self-similar fractals \cite{DRS,Hino10,PT,Str,T00} and, 
more generally, on  fractafolds (see \cite{St03,ST}). 

Section \ref{S:Dirichlet} is concerned with intrinsic metrics for regular Dirichlet forms, and Section \ref{S:resistance} with the resistance form case. Dirac operators and spectral triples are investigated in Sections \ref{S:Dirac} and \ref{S:triples}, respectively. Section \ref{S:gradient} rephrases the definition of the intrinsic metric in terms of vector fields. Two straightforward facts about composition and multiplication of energy finite functions are stated in a short appendix.

When dealing with symmetric bilinear or conjugate symmetric sesquilinear expressions $(f,g)\mapsto Q(f,g)$ we write  $Q(f):=Q(f,f)$ to shorten notation.

\subsection*{Acknowledgments } 
{The authors are very  grateful to Jean Bellissard  for  important  and helpful discussions leading to this paper. They also thank Joe P. Chen, Naotaka Kajino, Daniel Lenz and the anonymous referees for valuable suggestions.}

\section{Length spaces for local regular Dirichlet forms}\label{S:Dirichlet}

Let $(X,d)$ be a locally compact separable metric space and $\mu$ a nonnegative Radon measure on $X$ such that $\mu(U)>0$ for any nonempty open set $U\subset X$. A pair  $(\mathcal{E},\mathcal{F})$ is called a symmetric \emph{Dirichlet form} on $L_2(X,\mu)$ if it satisfies the following conditions:
\begin{enumerate}
\item[(DF1)] $\mathcal{E}:\mathcal{F}\times\mathcal{F}\to \mathbb{R}$ is a nonnegative definite bilinear form on a dense subspace $\mathcal{F}$ of $L_2(X,\mu)$,
\item[(DF2)] $(\mathcal{E},\mathcal{F})$ is closed, i.e. $(\mathcal{F},\mathcal{E}_1)$, where $\mathcal{E}_1(f,g):=\mathcal{E}(f,g)+\left\langle f,g\right\rangle_{L_2(X,\mu)}$, is a Hilbert space,
\item[(DF3)] $(\mathcal{E},\mathcal{F})$ has the \emph{Markov property}, i.e. $u\in\mathcal{F}$ implies $(0\vee u)\wedge 1\in\mathcal{F}$ and 
\[\mathcal{E}((0\vee u)\wedge 1)\leq \mathcal{E}(u).\]
\end{enumerate}
A Dirichlet form $(\mathcal{E},\mathcal{F})$ is called \emph{regular} if in addition 
\begin{enumerate}
\item[(DF4)] the space $\mathcal{C}:=C_c(X)\cap \mathcal{F}$ is both uniformly dense in the space of compactly supported continuous functions $C_c(X)$ and dense in $\mathcal{F}$ with respect to the Hilbert space norm $f\mapsto\mathcal{E}_1(f)^{1/2}$.
\end{enumerate}
See for instance \cite{FOT94, MR92}. Now let $(\mathcal{E},\mathcal{F})$ be a regular Dirichlet form on $L_2(X,\mu)$. Since 
\[\mathcal{E}(fg)^{1/2}\leq \left\|f\right\|_{L_\infty(X,\mu)}\mathcal{E}(g)^{1/2}+\left\|g\right\|_{L_\infty(X,\mu)}\mathcal{E}(g)^{1/2},\ \ f,g\in\mathcal{C},\]
cf. \cite[Corollary I.3.3.2]{BH91}, the space $\mathcal{C}$ is an algebra of bounded functions, usually referred to as the \emph{Dirichlet algebra}. From a representation theoretic point of view it has been studied in detail in \cite{C06}. For any $f\in\mathcal{C}$ we may define a nonnegative Radon measure $\Gamma(f)$ on $X$ by  
\[\int \varphi \ d\Gamma(f) = \eng(\varphi f,f)-\frac12\eng(\varphi, f^2), \ \ \varphi\in\mathcal{C}.\]
The measure $\Gamma(f)$ is referred to as the \emph{energy measure} of $f$. Elements of the domain $\mathcal{F}$ represent finite energy configurations on $X$, and $\Gamma(f)$ may be regarded as the distribution of energy for the configuration $f\in\mathcal{C}$.

A nonnegative Radon measure $m$ on $X$ with $m(U)>0$ for any nonempty open $U\subset X$ is called \emph{energy dominant} for $(\mathcal{E},\mathcal{F})$ if all energy measures $\Gamma(f)$, $f\in\mathcal{C}$, are absolutely continuous with respect to $m$. Note that the original measure $\mu$ is energy dominant for $(\mathcal{E},\mathcal{F})$ if and only if $(\mathcal{E},\mathcal{F})$ admits a \emph{carr\'e du champ}, see \cite[Chapter I]{BH91}.

A sequence of functions $(f_n)_{n=1}^\infty\subset \mathcal{C}$ will be called a \emph{coordinate sequence for $(\mathcal{E},\mathcal{F})$ with respect to $m$} if the span of $\left\lbrace f_n\right\rbrace_{n=1}^\infty$ is $\mathcal{E}_1$-dense in $\mathcal{F}$ and $\Gamma(f_n)\leq m$ for any $n$. Here the notation $\Gamma(f)\leq m$ means that $\Gamma(f)$ is absolutely continuous with respect to $m$ with Radon-Nikodym derivative $d\Gamma(f)/dm$ bounded by one $m$-a.e. A coordinate sequence $(f_n)_{n=1}^\infty$ will be called \emph{point separating} if, as usual, for any distinct $x,y\in X$ there is some $f_n$ such that $f_n(x)\neq f_n(y)$. If $(f_n)_{n=1}^\infty$ is a point separating coordinate sequence, the mapping $\phi:X\to \mathbb{R}^\mathbb{N}$, given by
\[\phi(x):=(f_1(x), f_2(x),...)\]
is a bijection of $X$ onto its image $\phi(X)$. Related concepts of coordinates have already been used in \cite{HRT} and \cite{T08}. Energy dominant measures and point separating coordinate sequences can always be found. The basic idea of the following fact is standard, see for instance \cite{Hino08} or \cite{HRT}.

\begin{lemma}\label{L:coords}
Let $(\mathcal{E},\ff)$ be a regular symmetric Dirichlet form on $L_2(X,\mu)$. Then there exist a finite energy dominant measure $m_0$ and a point separating coordinate sequence $(f_n)_n$ for $(\mathcal{E},\mathcal{F})$ with respect to $m_0$.
\end{lemma}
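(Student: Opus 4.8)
The plan is to extract a countable family of functions that is at once dense in energy and rich enough to separate points, to build a single finite measure from their energy measures, and finally to rescale. Since $X$ is separable and $\mu$ Radon, $L_2(X,\mu)$ is separable, and the domain of a regular form $(\mathcal{F},\mathcal{E}_1)$ is then a separable Hilbert space; as $\mathcal{C}=C_c(X)\cap\mathcal{F}$ is $\mathcal{E}_1$-dense in it, I would first choose a countable set $\{a_k\}\subset\mathcal{C}$ that is $\mathcal{E}_1$-dense in $\mathcal{F}$. Independently, since $X$ is locally compact and second countable, $C_0(X)$ is separable in the uniform norm, and by (DF4) the algebra $\mathcal{C}$ is uniformly dense in $C_c(X)$, hence in $C_0(X)$; so I can select a countable $\{b_k\}\subset\mathcal{C}$ that is uniformly dense in $C_c(X)$. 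Putting $\{g_k\}:=\{a_k\}\cup\{b_k\}$ yields a countable subfamily of $\mathcal{C}$ whose span is $\mathcal{E}_1$-dense in $\mathcal{F}$, and which is point separating: for distinct $x,y$ Urysohn's lemma gives $h\in C_c(X)$ with $h(x)\neq h(y)$, and any $g_k$ with $\|g_k-h\|_\infty$ sufficiently small then satisfies $g_k(x)\neq g_k(y)$.

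Next I would assemble the dominant measure. Fixing a finite measure $\mu_1$ with $\mu_1(U)>0$ on every nonempty open $U$ (for instance $d\mu_1=w\,d\mu$ with a strictly positive $w\in L_1(X,\mu)$), I set
\[
m_0:=\mu_1+\sum_{k=1}^{\infty}c_k\,\Gamma(g_k),\qquad c_k:=\frac{2^{-k}}{1+\Gamma(g_k)(X)},
\]
so that $m_0(X)\leq\mu_1(X)+1<\infty$, while the summand $\mu_1$ forces $m_0(U)>0$ for nonempty open $U$, as required by the definition of an energy dominant measure. The decisive point is to promote domination from the generators to all of $\mathcal{C}$, i.e.\ to show $\Gamma(f)\ll m_0$ for every $f\in\mathcal{C}$. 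Given a Borel set $A$ with $m_0(A)=0$, the inequality $c_k\Gamma(g_k)\leq m_0$ gives $\Gamma(g_k)(A)=0$ for all $k$. Choosing $g_{k_j}\to f$ in $\mathcal{E}_1$ and invoking the Minkowski-type inequality for energy measures on a fixed Borel set, $\Gamma(f)(A)^{1/2}\leq\Gamma(f-g_{k_j})(A)^{1/2}+\Gamma(g_{k_j})(A)^{1/2}$; the second term vanishes, while the first is at most $\Gamma(f-g_{k_j})(X)^{1/2}\leq C\,\mathcal{E}_1(f-g_{k_j})^{1/2}\to0$ by the standard bound $\Gamma(u)(X)\leq C\,\mathcal{E}(u)$. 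Hence $\Gamma(f)(A)=0$, so $\Gamma(f)\ll m_0$, and $m_0$ is finite and energy dominant.

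It then remains to fix the normalization of the coordinates. Setting $f_n:=\sqrt{c_n}\,g_n$ gives $\Gamma(f_n)=c_n\Gamma(g_n)\leq m_0$, hence $d\Gamma(f_n)/dm_0\leq1$ $m_0$-a.e., which is exactly the requirement $\Gamma(f_n)\leq m_0$ in the definition of a coordinate sequence. Multiplying by the nonzero scalars $\sqrt{c_n}$ alters neither the span of the family (still $\mathcal{E}_1$-dense) nor its point-separating property, so $(f_n)_n$ is the desired point separating coordinate sequence with respect to the finite energy dominant measure $m_0$. I expect the main obstacle to be the energy-dominance step above: everything else is soft (separability and Urysohn in step one, an elementary rescaling in the last step), whereas upgrading $\Gamma(g_k)\ll m_0$ to $\Gamma(f)\ll m_0$ for arbitrary $f\in\mathcal{C}$ genuinely relies on the two standard but essential properties of energy measures, namely their $\mathcal{E}_1$-continuity through $\Gamma(u)(X)\leq C\,\mathcal{E}(u)$ and the Minkowski inequality $\Gamma(u+v)(A)^{1/2}\leq\Gamma(u)(A)^{1/2}+\Gamma(v)(A)^{1/2}$ valid on each Borel set $A$.
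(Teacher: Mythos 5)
Your argument is correct and follows essentially the same route as the paper: a countable $\mathcal{E}_1$-dense family in $\mathcal{C}$ plus a countable point-separating family obtained from uniform density of $\mathcal{C}$ in $C_0(X)$, a weighted sum of their energy measures as $m_0$, and the inequality $|\Gamma(f)(A)^{1/2}-\Gamma(g)(A)^{1/2}|\leq \mathcal{E}(f-g)^{1/2}$ to upgrade absolute continuity from the generating family to all of $\mathcal{F}$. Your two refinements --- adding the background measure $\mu_1$ to guarantee $m_0(U)>0$ on nonempty open sets, and rescaling by $\sqrt{c_n}$ so that the densities $d\Gamma(f_n)/dm_0$ are genuinely bounded by one --- are welcome, since the paper's version of the construction leaves both of these points implicit.
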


\begin{proof} The separability of the Hilbert spaces $(\mathcal{F},\mathcal{E}_1)$ together with the regularity property (DF4) implies the existence of a countable collection of functions $\left\lbrace g_n\right\rbrace_n\subset\mathcal{C}$ such that $\lin (\left\lbrace g_n\right\rbrace_n)$ is $\mathcal{E}_1$-dense in $\mathcal{F}$. By (DF4), together with the uniform density of $C_c(X)$ in the space $C_0(X)$ of continuous functions on $X$ that vanish at infinity, any function $f\in C_0(X)$ can be uniformly approximated by a sequence of functions from $\mathcal{C}$. The Stone-Weierstrass theorem implies that $C_0(X)$ is separable, and therefore we can find a countable family $\left\lbrace h_n\right\rbrace_n\subset \mathcal{C}$ that separates the points of $X$. If a function $g_n$ has positive energy, set $\widetilde{g_n}:=\mathcal{E}(g_n)^{-1/2}\:g_n$, if it has zero energy, $\widetilde{g_n}:=g_n$. Similarly define functions $\widetilde{h_n}$. Let $(f_n)_n$ be a sequence obtained by relabelling the union $\left\lbrace \widetilde{g_n}\right\rbrace_n\cup\left\lbrace \widetilde{h_n}\right\rbrace_n$. For any summable sequence $(a_n)_n\subset (0,1)$ the sum of measures
\begin{equation}\label{E:m0}
m_0:=\sum_{n:\ \mathcal{E}(f_n)>0} a_n\:\Gamma(f_n).
\end{equation}
is a finite measure for $(\mathcal{E},\mathcal{F})$, and the energy densities satisfy $d\Gamma(f_n)/dm_0\leq 1$ $m_0$-a.e. for all $n$. It is energy dominant because $\lin (\left\lbrace f_n\right\rbrace_n)$ is dense in $\mathcal{F}$ and 
\begin{equation}\label{E:approxGamma}
|\Gamma(f)(A)^{1/2}-\Gamma(g)(A)^{1/2}|\leq \mathcal{E}(f-g)^{1/2}
\end{equation}
for any Borel set $A\subset X$ and any $f,g\in\mathcal{C}$, cf. \cite[Section 3.2]{FOT94}.
\end{proof}

\begin{remark}\mbox{}
\begin{itemize}
\item[(i)] Hino \cite{Hino08} calls an energy dominant measure for $(\mathcal{E},\mathcal{F})$ \emph{minimal} if
it is absolutely continuous with respect to any other energy dominant measure for $(\mathcal{E},\mathcal{F})$. Any two minimal energy dominant measures are mutually absolutely continuous. The measure $m_0$ as in (\ref{E:m0}) is minimal energy dominant.
\item[(ii)] Let $m$ be an energy dominant measure for $(\mathcal{E},\mathcal{F})$. It is straightforward to see that there exists a coordinate sequence for $(\mathcal{E},\mathcal{F})$ with respect to $m$ if and only if there are a countable collection of functions $\left\lbrace f_n\right\rbrace_n$ with $\lin (\left\lbrace f_n\right\rbrace_n)$ $\mathcal{E}_1$-dense in $\mathcal{F}$ and a sequence $(a_n)_n\subset (0,1)$ such that $\sum_n a_n\Gamma(f_n)\leq m$. If $m'$ is another energy dominant measure and $m\leq m'$ then trivially there is also a coordinate sequence for $m'$.
\item[(iii)] If $\mathcal{E}$ is closable with respect to $m$, we may change measure and establish $m$ as a new reference measure by time change arguments, see e.g. \cite[Section 6.2]{FOT94}. In this case $m$ may be seen as the distribution of volume.
\end{itemize}
\end{remark}

A regular symmetric Dirichlet form $(\mathcal{E},\mathcal{F})$ is called \emph{strongly local} if $\mathcal{E}(f,g)=0$ whenever $f\in\mathcal{C}$ is constant on a neighborhood of the support $\supp g$ of $g\in\mathcal{C}$, cf. \cite[Section 3.2]{FOT94}. We are interested in the question whether for a strongly local regular Dirichlet form $(\mathcal{E},\mathcal{F})$ the set $X$, together with the intrinsic metric $d_{\Gamma,m}$ induced by $(\mathcal{E},\mathcal{F})$ and an energy dominant measure $m$, forms a length space.  

Let $(\mathcal{E},\mathcal{F})$ be a strongly local Dirichlet form on $L_2(X,\mu)$. Then we can define (Radon) energy measures $\Gamma(f)$ for functions $f$ from
\begin{multline}
\mathcal{F}_\loc=\left\lbrace f\in L_{2,\loc}(X,\mu)\ | \text{ for any $K\subset X$ compact}\right.\notag\\
\left. \text{there exists some $u\in \mathcal{F}$ with $u|_K=f|_K$ $\mu$-a.e.} \right\rbrace
\end{multline}
by setting $\Gamma(f):=\Gamma(u)$, seen as a measure on $K$, if $u\in \mathcal{F}_{\loc}$ is such that $u|_K=f|_K$ $\mu$-a.e. See \cite{FOT94, LJ78, Sto10, Stu94, Stu95} for details. Now let $m$ be an energy dominant measure for $(\mathcal{E},\mathcal{F})$. For simplicity we use the symbol $\Gamma(f)$ also to denote the density  $d\Gamma(f)/dm$ of the energy measure $\Gamma(f)$ of $f\in\mathcal{F}_\loc$ with respect to this fixed measure $m$. Set 
\begin{equation}\label{E:A}
\mathcal{A}:=\left\lbrace f\in\ff_\loc\cap C(X)\ | \ \Gamma(f)\in L_\infty(X,m) \right\rbrace. 
\end{equation}
The \emph{intrinsic metric} or \emph{Carnot-Caratheodory metric} induced by $(\mathcal{E},\ff)$ and $m$ is defined by 
\begin{equation}\label{E:CCmetric}
d_{\Gamma, m}(x,y):=\sup\left\lbrace f(x)-f(y)\ |\ f\in \cal{A} \text{ with } \Gamma(f)\leq m\right\rbrace
\end{equation}
for any $x,y\in X$. If a point separating coordinate sequence for $(\mathcal{E},\mathcal{F})$ with respect to $m$ exists, then the intrinsic metric $d_{\Gamma,m}$ is a \emph{metric in the wide sense}, \cite{Sto10}, i.e. it satisfies the axioms of a metric but may attain the value $+\infty$. For investigations of $d_{\Gamma, m}$ in the context of strongly local Dirichlet forms see for instance \cite{BM95, BdMLSt, KZh, Sto10, Stu94, Stu95}. These references assume that the original reference measure $\mu$ itself is energy dominant and use it in place of $m$. However, actually one can allow arbitrary energy dominant measures $m$, and the value of $d_{\Gamma,m}$ will depend on the choice of $m$. 

A common hypothesis in the existing literature on intrinsic metrics is to require that $d_{\Gamma,m}$ induces the original topology of $X$, cf. \cite{Sto10, Stu94, Stu95}. Theorems \ref{T:general} and \ref{T:converse} below sketch a criterion for the coincidence of these topologies. They exist in various formulations and are well known. For the classical situation see for instance \cite[Theorem 11.3.3]{Dudley}. In an operator theoretic context these statements were first proved in \cite[Corollary 5.2]{Pa98} and in an abstract form for general seminorms on normed spaces in \cite[Theorems 1.8 and 1.9]{R98}. Other versions can be found in \cite{OR05, R99}. A full generalization of these statements to non-unital $C^\ast$ algebras respectively locally compact Hausdorff spaces was given in \cite[Theorem 4.1]{La07}. We restate Theorems \ref{T:general} and \ref{T:converse} for Dirichlet forms to emphasize their close connection to arguments in \cite{Sto10}.

Let $C_0(X)$ denote the space of continuous functions on $X$ that vanish at infinity and consider the space
\begin{equation}\label{E:A0}
\mathcal{A}_0:=\left\lbrace f\in\ff_\loc\cap C_0(X)\ | \ \Gamma(f)\in L_\infty(X,m) \right\rbrace. 
\end{equation}
Set $\mathcal{A}_0^1:=\left\lbrace f\in\mathcal{A}_0: \Gamma(f)\leq m\right\rbrace$. According to the proof of \cite[Lemma 5.4 (ii)]{Sto10} the set $\mathcal{A}_0^1$ is a closed subset of $C_0(X)$.

\begin{theorem}\label{T:general}
Let $(\mathcal{E},\mathcal{F})$ be a strongly local Dirichlet form on $L_2(X,\mu)$, let $m$ be an energy dominant measure for $(\mathcal{E},\mathcal{F})$ and assume there exists a point separating coordinate sequence for $(\mathcal{E},\mathcal{F})$ with respect to $m$. If  $\mathcal{A}_0^1$ is compact in $C_0(X)$, then  $d_{\Gamma, m}$ induces the original topology.
\end{theorem}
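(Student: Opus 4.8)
\emph{Overall plan.}
The claim is that the topology induced by $d_{\Gamma,m}$ agrees with the original topology on $X$, and I would prove this by establishing the two corresponding inclusions of topologies separately. The engine for one inclusion is the point separating coordinate sequence, while the other is exactly where the compactness of $\mathcal{A}_0^1$ enters. Throughout I would record the elementary but crucial remark that every $f\in\mathcal{A}$ with $\Gamma(f)\le m$ is continuous for the original topology and satisfies $|f(x)-f(y)|\le d_{\Gamma,m}(x,y)$; thus such $f$ are $1$-Lipschitz for the intrinsic metric, and in particular $d_{\Gamma,m}(x_0,\cdot)$, being a supremum of originally continuous functions, is lower semicontinuous for the original topology.

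\emph{The soft inclusion (original topology $\subseteq$ intrinsic topology).}
First I would show that $d_{\Gamma,m}$-convergence implies original convergence. Let $(f_n)_n$ be the assumed point separating coordinate sequence, normalized so that $\Gamma(f_n)\le m$; each $f_n$ lies in $\mathcal{C}\subset C_0(X)$, separates points, and is $1$-Lipschitz for $d_{\Gamma,m}$. Consequently the coordinate map $\phi=(f_1,f_2,\dots)$ is an embedding of $X$, with its original topology, into $\mathbb{R}^{\mathbb{N}}$: since the $f_n\in C_0(X)$ separate points, $\phi$ extends to a continuous injection of the one-point compactification (sending $\infty\mapsto 0$), which, being a continuous injection of a compact space into a Hausdorff space, is a homeomorphism onto its image. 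Now if $d_{\Gamma,m}(x_k,x_0)\to 0$ then $|f_n(x_k)-f_n(x_0)|\le d_{\Gamma,m}(x_k,x_0)\to 0$ for every $n$, so $\phi(x_k)\to\phi(x_0)$, and the embedding gives $x_k\to x_0$ originally. This inclusion uses only the point separating coordinate sequence, not compactness.

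\emph{The hard inclusion (intrinsic topology $\subseteq$ original topology).}
Here I would argue that originally convergent sequences are $d_{\Gamma,m}$-convergent. Suppose $x_k\to x_0$ originally but, for contradiction, $d_{\Gamma,m}(x_k,x_0)\ge\varepsilon$ along a subsequence, and pick $f_k\in\mathcal{A}$ with $\Gamma(f_k)\le m$ and $f_k(x_k)-f_k(x_0)\ge\varepsilon/2$. The difficulty is that the competitors $f_k$ range over $\mathcal{A}\subset C(X)$ (possibly unbounded, not vanishing at infinity), whereas the hypothesis concerns $\mathcal{A}_0^1\subset C_0(X)$. I would bridge this by localization: fix a relatively compact original neighborhood containing $x_0$ and eventually all $x_k$, choose a cutoff $\chi\in\mathcal{C}$ with $0\le\chi\le1$, $\chi\equiv1$ there and compactly supported, and set $g_k:=(f_k-f_k(x_0))\chi$, which lies in $C_c(X)\cap\mathcal{F}\subset\mathcal{A}_0$ by the product rule for energy-finite functions recalled in the appendix. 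By strong locality $\Gamma(g_k)=\Gamma(f_k)\le m$ where $\chi\equiv1$, while on $\supp\chi$ the Leibniz-type estimate for energy measures yields
\[\Gamma(g_k)\le 2\chi^2\,\Gamma(f_k)+2\,(f_k-f_k(x_0))^2\,\Gamma(\chi),\]
so $\Lambda_k:=\esssup_X d\Gamma(g_k)/dm<\infty$ and $g_k/\Lambda_k\in\mathcal{A}_0^1$. Since $x_k\to x_0$, eventually $g_k=f_k-f_k(x_0)$ near both points, so $g_k(x_k)-g_k(x_0)=f_k(x_k)-f_k(x_0)\ge\varepsilon/2$. Using the compactness of $\mathcal{A}_0^1$ in $C_0(X)$, pass to a uniformly convergent subsequence $g_k/\Lambda_k\to g$; uniform convergence together with $x_k\to x_0$ and continuity of $g$ forces $(g_k(x_k)-g_k(x_0))/\Lambda_k\to g(x_0)-g(x_0)=0$, which contradicts the lower bound $\varepsilon/(2\Lambda_k)$ as soon as the $\Lambda_k$ remain bounded.

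\emph{Main obstacle and conclusion.}
I expect the only genuinely delicate point to be the uniform control of the rescaling constants $\Lambda_k$, equivalently a uniform bound over all competitors $f_k$ with $\Gamma(f_k)\le m$ on the oscillation term $\sup_{\supp\chi}|f_k-f_k(x_0)|^2$ entering the estimate above. Establishing this from the hypotheses (it amounts to a uniform oscillation, i.e.\ a finite $d_{\Gamma,m}$-diameter of $\supp\chi$ for the relevant class of functions) is the crux of the locally compact case; in the compact case one has $\mathcal{A}=\mathcal{A}_0$, no localization is needed, and the contradiction argument runs directly on $\mathcal{A}_0^1$. Once both inclusions are in place the two topologies coincide, which is the assertion. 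Apart from the cutoff and energy bookkeeping just described, the argument follows the standard Rieffel--Latr\'emoli\`ere pattern quoted before the statement.
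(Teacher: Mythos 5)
Your architecture is the right one, and the argument is essentially complete when $X$ is compact, but in the general locally compact case both halves have a gap --- one you flag yourself and one you do not. In the soft inclusion, the claim that $\phi$ extends to a continuous \emph{injection} of the one-point compactification is unjustified: sending $\infty\mapsto 0$ is injective only if the coordinate sequence vanishes nowhere, i.e.\ for every $x\in X$ some $f_n(x)\neq 0$. That is not among the hypotheses (the paper invokes ``vanishing nowhere'' only as an extra assumption in a later remark, to conclude $\mathbb{A}=C_0(X)$). If all coordinates vanish at some $x_0$ and $x_k\to\infty$, then $\phi(x_k)\to 0=\phi(x_0)$ and the embedding claim fails; excluding $d_{\Gamma,m}(x_k,x_0)\to 0$ in that situation needs a separate argument --- this is exactly the non-unital subtlety that leads Latr\'emoli\`ere to work with bounded-Lipschitz distances. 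In the hard inclusion, you candidly leave the uniform bound on the renormalization constants $\Lambda_k$ unproved; observe that this bound amounts to a finite $d_{\Gamma,m}$-radius of $\supp\chi$ around $x_0$ over the full competitor class $\mathcal{A}$, which is not an a priori consequence of the compactness of $\mathcal{A}_0^1$ (that only controls $d_{\Gamma,m}^0$, the metric defined with $\mathcal{A}_0$ in place of $\mathcal{A}$). There is also a harmless normalization slip: since $\Gamma(cg)=c^2\Gamma(g)$, you should divide $g_k$ by $\Lambda_k^{1/2}$ rather than $\Lambda_k$.

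For comparison, the paper does not give a direct proof at all: it notes that the cited results of Rieffel, Pavlovi\'c and Latr\'emoli\`ere yield the statement for $d_{\Gamma,m}^0$, and then transfers it to $d_{\Gamma,m}$ via Proposition 1(a) in Appendix 4.2 of \cite{Stu94}, which shows that the two metrics induce the original topology simultaneously. Your two gaps correspond precisely to those two citations: the passage from $\mathcal{A}_0^1$ to all of $\mathcal{A}$ (your $\Lambda_k$ problem) is Sturm's comparison, and the behaviour at infinity in the soft direction is what the non-unital theory of \cite{La07} handles. So the sketch is a reasonable outline of what a self-contained proof would have to accomplish, but as written the crux is asserted rather than proved.
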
 

Under the hypotheses of Theorem \ref{T:general} the cited results in \cite{La07, Pa98, R98} imply that the metric $d_{\Gamma_\mu}^0$, defined similarly as $d_{\Gamma,\mu}$ but with $\mathcal{A}_0$ in place of $\mathcal{A}$, induces the original topology on $X$.
On the other hand it follows from \cite[Appendix 4.2, Proposition 1 (a) and its proof]{Stu94} that $d_{\Gamma,m}^0$ induces the original topology on $X$ if and only if $d_{\Gamma,m}$ does. 

\begin{remark}
In the next section we will consider resistance forms, for which this coincidence of topologies can be verified directly.
\end{remark}

If $d'$ is a given metric in the wide sense on a set $X$ and $\gamma:[a,b]\to X$ is a \emph{path} in X, i.e. a continuous mapping from a closed interval $[a,b]\subset \mathbb{R}$ into $(X,d)$, then the \emph{length} of $\gamma$ is defined as
\[l(\gamma):=\sup \sum_k d'(\gamma(t_{k+1}),\gamma(t_k))\]
with the supremum taken over all finite partitions $a=t_0<t_1<...<t_N=b$ of $[a,b]$. The \emph{path metric} $d_l$ is defined by
\[d_l(x,y):=\inf\left\lbrace l(\gamma)\ |\ \text{$\gamma:[a,b]\to X$ is a path in $X$ with $x,y\in\gamma([a,b])$}\right\rbrace, \]
with $d_l(x,y):=+\infty$ if the infimum is taken over the empty set. The path metric $d_l$ always dominates the original metric, $d_l\geq d'$. The space $(X,d')$ is called a \emph{length space} if $d_l=d'$.

Now assume that $(X,d)$ is a locally compact separable metric space and $(\mathcal{E},\mathcal{F})$ is a strongly local Dirichlet form.
If the reference measure $\mu$ itself is energy dominant and the topologies induced by $d$ and $d_{\Gamma,\mu}$ coincide, then a result of Stollmann \cite[Theorem 5.2]{Sto10} implies that $(X,d_{\Gamma,\mu})$ is a length space. However, a look at the proofs of \cite[Theorems 5.1 and 5.2]{Sto10} and their background  (\cite[Lemma 5.4]{Sto10} and \cite[Lemma A.2 and Propositions A.4 and A.5]{BdMLSt}) reveals that this conclusion remains valid for any energy dominant measure $m$, provided a point separating coordinate sequence exists. A crucial ingredient seems to be a Rademacher type theorem, originally proved for general regular Dirichlet forms by Frank, Lenz and Wingert, see \cite[Theorem 4.9]{FLW} and \cite[Theorem 5.1]{Sto10}. Together with Theorem \ref{T:general} we obtain the following.

\begin{corollary}
Let $(\mathcal{E},\mathcal{F})$ be a strongly local Dirichlet form on $L_2(X,\mu)$, let $m$ be an energy dominant measure for $(\mathcal{E},\mathcal{F})$, and assume there exists a point separating coordinate sequence for $(\mathcal{E},\mathcal{F})$ with respect to $m$. If  $\mathcal{A}_0^1$ is compact in $C_0(X)$, then the metric space $(X,d_{\Gamma, m})$ is a length space.
\end{corollary}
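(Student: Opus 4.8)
The plan is to combine Theorem \ref{T:general} with (the proof of) Stollmann's length space theorem \cite[Theorem 5.2]{Sto10}, the point being that every step in the latter uses the reference measure only through its energy dominance. First I would apply Theorem \ref{T:general}: since $\mathcal{A}_0^1$ is compact in $C_0(X)$ and a point separating coordinate sequence with respect to $m$ exists, $d_{\Gamma,m}$ is a metric in the wide sense that induces the original topology of $X$. This coincidence of topologies is precisely the standing hypothesis under which the length-space results of \cite{Sto10, Stu94, Stu95} are formulated, so the corollary is reduced to verifying that Stollmann's argument remains valid when the energy dominant measure $\mu$ of loc.\ cit.\ is replaced by the general energy dominant measure $m$.

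Next I would recall the structure of that argument. The inequality $d_l \geq d_{\Gamma,m}$ is automatic, so the content is the reverse inequality: fixing $x,y\in X$, one must join them by paths of $d_{\Gamma,m}$-length arbitrarily close to $d_{\Gamma,m}(x,y)$. Two ingredients enter. The first is a Rademacher-type theorem, namely that a bounded function is Lipschitz with constant one for $d_{\Gamma,m}$ if and only if its energy density satisfies $\Gamma(f)\leq m$; for general regular Dirichlet forms this is \cite[Theorem 4.9]{FLW} (see also \cite[Theorem 5.1]{Sto10}), and it lets one realize $d_{\Gamma,m}(\cdot,y)$ as an admissible competitor in (\ref{E:CCmetric}). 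The second is the local description of the intrinsic balls carried out in \cite[Lemma 5.4]{Sto10} and \cite[Lemma A.2, Propositions A.4 and A.5]{BdMLSt}, which supplies the chaining/approximate-midpoint construction used to build the required paths.

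The key point to check, and the only place where the choice of measure could intervene, is that in both ingredients the measure appears exclusively through the energy measures $\Gamma(f)$, their densities relative to the fixed reference measure, and the absolute continuity $\Gamma(f)\ll m$ guaranteed by energy dominance. I would therefore go through the cited lemmas and confirm that no property of $\mu$ beyond being Radon, charging every nonempty open set, and being energy dominant is ever used; the point separating coordinate sequence is needed only to ensure that $d_{\Gamma,m}$ separates points and is finite, i.e.\ that it is a genuine metric in the wide sense. Granting this, the proof of \cite[Theorem 5.2]{Sto10} reproduces verbatim with $m$ in place of $\mu$, giving $d_l = d_{\Gamma,m}$ and hence that $(X,d_{\Gamma,m})$ is a length space.

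I expect the main obstacle to be the bookkeeping in the second ingredient. One must be sure that the path-construction arguments of \cite{Sto10, BdMLSt}, originally phrased with an energy dominant reference measure, do not covertly exploit the $L_2(X,\mu)$ Hilbert space structure, the self-adjoint generator, or the Markovian semigroup of $(\mathcal{E},\mathcal{F})$ — none of which need be available for an arbitrary energy dominant $m$, since $\mathcal{E}$ may fail to be closable with respect to $m$ (cf.\ Remark (iii) above). Isolating the genuinely metric and energy-measure-theoretic content of those proofs, and separating it from any auxiliary use of the probabilistic or spectral apparatus tied to $\mu$, is the step that requires care.
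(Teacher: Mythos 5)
Your proposal follows essentially the same route as the paper: apply Theorem \ref{T:general} to obtain the coincidence of topologies, then observe that the proofs of \cite[Theorems 5.1 and 5.2]{Sto10} and their background (\cite[Lemma 5.4]{Sto10}, \cite[Lemma A.2, Propositions A.4 and A.5]{BdMLSt}), resting on the Rademacher-type theorem \cite[Theorem 4.9]{FLW}, use the reference measure only through energy dominance and so go through verbatim with $m$ in place of $\mu$. This is exactly the argument the paper gives in the paragraph preceding the corollary, so there is nothing to correct.
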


\begin{remark} Assume $\mathcal{A}_0^1$ is compact in $C_0(X)$.
\begin{enumerate}
\item[(i)] Then for any two distinct points $x,y\in X$ with $d_{\Gamma,m}^0(x,y)<+\infty$ and any sequence $(f_n)_n\subset \mathcal{A}^1_0$ such that $d_{\Gamma,m}^0(x,y)=\lim_n (f_n(x)-f_n(y))$ there is a uniformly convergent subsequence $(f_{n_k})_k$ with limit $g\in C_0(X)$, hence $d_{\Gamma,m}^0(x,y)=g(x)-g(y)$.
\item[(ii)] Since the topologies induced by $d_{\Gamma,m}^0$ and $d$ and $d_{\Gamma,m}$ coincide by Theorem \ref{T:general} and \cite[Appendix 4.2, Proposition 1 (a)]{Stu94}, the distance function $d_x$ , given by
\[d_x(z):=d_{\Gamma,m}(x,z),\ \ z\in X,\]
is a maximizing element in (\ref{E:CCmetric}), i.e. $d_x(y)=d_{\Gamma,m}(x,y)$. This was shown by Sturm, see \cite[Lemma 1']{Stu94} and \cite[Lemma 1]{Stu95}. In general $d_x$ will not be an element of $\mathcal{A}_0$.
\end{enumerate}
\end{remark} 

For compact spaces a converse of Theorem \ref{T:general} is a simple consequence of the Arzel\`{a}-Ascoli theorem.
\begin{theorem}\label{T:converse}
Assume $(X,d)$ is compact. Let $(\mathcal{E},\mathcal{F})$ be a strongly local Dirichlet form on $L_2(X,\mu)$ and $m$ an energy dominant measure for $(\mathcal{E},\mathcal{F})$. If the topologies induced by $d$ and $d_{\Gamma,m}$ coincide, then $\mathcal{A}^1_0$ is compact in $C(X)$.
\end{theorem}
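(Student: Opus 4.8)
The plan is to deduce the statement from the Arzel\`a--Ascoli theorem. Since $(X,d)$ is compact we have $C_0(X)=C(X)$, so that $\mathcal{A}_0^1\subset C(X)$, and it suffices to verify that $\mathcal{A}_0^1$ is closed, equicontinuous and uniformly bounded. Closedness is already in hand: it was noted above that, by the proof of \cite[Lemma 5.4 (ii)]{Sto10}, the set $\mathcal{A}_0^1$ is closed in $C_0(X)=C(X)$. Thus the substance of the argument is the verification of \emph{equicontinuity}, and boundedness will come almost for free.

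For equicontinuity I would start from the very definition (\ref{E:CCmetric}) of the intrinsic metric. Each $f\in\mathcal{A}_0^1$ lies in $\mathcal{A}$ (again because $C_0(X)=C(X)$) and satisfies $\Gamma(f)\leq m$, hence it is an admissible competitor in the supremum defining $d_{\Gamma,m}$. Consequently $f(x)-f(y)\leq d_{\Gamma,m}(x,y)$ for all $x,y\in X$, and exchanging the roles of $x$ and $y$ yields the uniform Lipschitz bound
\[|f(x)-f(y)|\leq d_{\Gamma,m}(x,y),\qquad f\in\mathcal{A}_0^1,\quad x,y\in X.\]
Thus $\mathcal{A}_0^1$ is a family of functions that are $1$-Lipschitz for $d_{\Gamma,m}$. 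To convert this into equicontinuity for the original metric $d$ I would use the hypothesis that $d$ and $d_{\Gamma,m}$ induce the same topology: since this common topology is compact, the identity map from $(X,d)$ to $(X,d_{\Gamma,m})$ is a continuous map out of a compact metric space and hence uniformly continuous. Therefore, given $\varepsilon>0$, there is $\delta>0$ with $d(x,y)<\delta\Rightarrow d_{\Gamma,m}(x,y)<\varepsilon$; combined with the Lipschitz bound this gives $|f(x)-f(y)|<\varepsilon$ for all $f\in\mathcal{A}_0^1$ whenever $d(x,y)<\delta$, which is exactly $d$-equicontinuity.

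Finally, since the topologies coincide and $X$ is compact, $(X,d_{\Gamma,m})$ is compact as well, so its diameter $D:=\diam_{d_{\Gamma,m}}(X)$ is finite, and the Lipschitz bound forces a uniform control $\sup_X f-\inf_X f\leq D$ on the oscillation of every $f\in\mathcal{A}_0^1$; this furnishes the remaining boundedness input for Arzel\`a--Ascoli, which then yields compactness of $\mathcal{A}_0^1$ in $C(X)$. The step I expect to be the main obstacle is the passage from $d_{\Gamma,m}$-Lipschitz continuity to $d$-equicontinuity: this is precisely where both the coincidence of the two topologies and the compactness of $X$ are indispensable, since on a non-compact space the coincidence of topologies alone would not upgrade continuity of the identity to uniform continuity.
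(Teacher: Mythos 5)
Your route is the same as the paper's: the text offers no proof beyond the remark that the statement ``is a simple consequence of the Arzel\`a--Ascoli theorem,'' and your verification of closedness (via Stollmann) and of equicontinuity (every $f\in\mathcal{A}_0^1$ is an admissible competitor in (\ref{E:CCmetric}), hence $1$-Lipschitz for $d_{\Gamma,m}$, and the identity from the compact space $(X,d)$ to $(X,d_{\Gamma,m})$ is uniformly continuous because the topologies coincide) is exactly the intended argument. The equicontinuity step, which you correctly single out as the crux, is handled properly.

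The step that does not close as written is uniform boundedness. Arzel\`a--Ascoli requires the family to be bounded in $C(X)$, and a bound $\sup_X f-\inf_X f\leq \diam_{d_{\Gamma,m}}(X)$ on the oscillation does not deliver this: $\mathcal{A}_0^1$ is invariant under adding constants, and whenever $\mathbf{1}\in\mathcal{F}_{loc}$ with $\Gamma(\mathbf{1})=0$ (as holds for the standard strongly local forms on compact spaces such as the Sierpinski gasket or a compact manifold) every constant function belongs to $\mathcal{A}_0^1$, so the set is literally unbounded in $C(X)$ and cannot be compact. This is as much a defect of the statement as of your proof: the results of Rieffel, Pavlovi\'c and Latr\'emoli\`ere that the paper invokes are formulated for the image of the Lipschitz ball in the quotient modulo constants, equivalently for the normalized family $\left\lbrace f-f(x_0): f\in\mathcal{A}_0^1\right\rbrace$ with a fixed base point $x_0$, and your oscillation estimate is precisely the uniform bound needed to apply Arzel\`a--Ascoli to that family. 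So your argument becomes complete once you either normalize at a base point or pass to the quotient $C(X)/\mathbb{R}\mathbf{1}$; as literally written, the final paragraph asserts a boundedness that fails.
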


\section{Length spaces induced by resistance forms}\label{S:resistance}

In this section we investigate resistance forms in the sense of Kigami and show that in the compact case they always produce a geodesic space. 
We recall the definition, see \cite[Definition 2.8]{Kig03} or \cite[Definition 3.1]{Kig12}. Given a set $X$, a pair $(\mathcal{E},\mathcal{F})$ is called a \emph{resistance form} on $X$ if
\begin{enumerate}
\item[(RF1)] $\mathcal{E}:\mathcal{F}\times\mathcal{F}\to\mathbb{R}$ is a nonnegative definite symmetric bilinear form on a vector space $\mathcal{F}$ of real valued functions on $X$, and $\mathcal{E}(u)=0$ if and only if $u$ is constant on $X$,
\item[(RF2)] $(\mathcal{F}/\sim,\mathcal{E})$ is a Hilbert space; here $\sim$ is the equivalence relation on $\mathcal{F}$ given by $u\sim v$ if and only if $u-v$ is constant on $X$,
\item[(RF3)] $\mathcal{F}$ separates the points of $X$,
\item[(RF4)] For any $x,y\in X$ the expression
\[d_R(x,y) := \sup\left\lbrace |u(x)-u(y)|^2~:~u\in\ff,\ \eng(u)\leq 1\right\rbrace\]
is finite,
\item[(RF5)] $(\mathcal{E},\mathcal{F})$ has the \emph{Markov property}, i.e. $u\in\mathcal{F}$ implies $(0\vee u)\wedge 1\in\mathcal{F}$ and 
\[\mathcal{E}((0\vee u)\wedge 1)\leq \mathcal{E}(u).\]
\end{enumerate}
Comprehensive background can be found in \cite{Kig93, Kig01, Kig03}. For resistance forms the length space property can be verified independently of Theorem \ref{T:general}. 

Take $X$ to be a nonempty set and let $(\mathcal{E},\mathcal{F})$ be a resistance form on $X$. Then $d_R$ as defined in (RF4) is a metric on $X$, the so-called \emph{resistance metric} associated with $(\mathcal{E},\mathcal{F})$, cf. \cite[Definition 2.3.2]{Kig01} and \cite[Definition 2.11]{Kig03}. There is a one-to-one correspondence between resistance forms and resistance metrics, see \cite[Theorems 2.3.4 and 2.3.6]{Kig01}.

The inequality 
\begin{equation}\label{E:resistanceest}
|u(x)-u(y)|^2\leq d_R(x,y)\mathcal{E}(u)
\end{equation}
holds for any $x,y\in X$ and $u\in \ff$, showing that the space $\ff$ is a subspace of the space of $1/2$-H\"older continuous functions on $(X,d_R)$, cf. \cite[Section 2]{Kig03}. In particular, $\mathcal{F}\subset C(X)$. If the space $(X,d_R)$ is compact,
then the resistance form $(\mathcal{E},\mathcal{F})$ is seen to be \emph{regular}, i.e. $\mathcal{F}$ is uniformly dense in $C(X)$, \cite{Kig12}. Further, it is known, \cite[Section 2.3]{Kig01} that
\[
d_R^{1/2}(x,y) := \sup\left\lbrace |f(x)-f(y)|~:~\eng(f)\leq 1\right\rbrace
\]
is a metric which induces the same topology as $d_R$.

If the space $(X,d_R)$ is endowed with a suitable measure, a given resistance form $(\mathcal{E},\mathcal{F})$ induces a Dirichlet form in the sense of \cite{FOT94} as discussed in the preceding section. Let $\mu$ be a finite nonnegative Borel regular measure on the space $(X,d_R)$ with $\mu(U)>0$ for all nonempty open $U\subset X$. Then $\mathcal{F}\subset L_2(X,\mu)$, and $(\mathcal{E},\mathcal{F})$ is a regular symmetric Dirichlet form on $L_2(X,\mu)$. For a fixed measure $\mu$ we may therefore consider energy measures as in the previous section, and Lemma \ref{L:coords} remains valid. 

\begin{lemma}\label{L:separatepoints}
Let $(\mathcal{E},\ff)$ be a resistance form on $X$ such that $(X,d_R)$ is compact and let $m$ be a finite energy dominant measure on $(X,d_R)$. Then any coordinate sequence $(f_n)_{n=1}^\infty$ for $(\mathcal{E},\mathcal{F})$ with respect to $m$ is point separating.
\end{lemma}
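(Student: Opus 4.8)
The plan is to argue by contradiction, with the resistance estimate (\ref{E:resistanceest}) doing the essential work and the point separation axiom (RF3) providing the contradiction. Suppose the coordinate sequence $(f_n)_n$ were \emph{not} point separating. Then there would exist two distinct points $x,y\in X$ such that $f_n(x)=f_n(y)$ for every $n$. I want to show that such a pair of points is in fact invisible to \emph{all} of $\mathcal{F}$, contradicting (RF3).

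First I would note that, since $(X,d_R)$ is compact, the $1/2$-H\"older estimate recorded after (\ref{E:resistanceest}) gives $\mathcal{F}\subset C(X)$, so pointwise evaluation at $x$ and $y$ is meaningful and additive over finite linear combinations. Consequently every $g\in\lin(\{f_n\}_n)$ satisfies $g(x)=g(y)$. Next I would promote this from the span to all of $\mathcal{F}$ using the defining property of a coordinate sequence, namely that $\lin(\{f_n\}_n)$ is $\mathcal{E}_1$-dense in $\mathcal{F}$: given $u\in\mathcal{F}$, choose $g_k\in\lin(\{f_n\}_n)$ with $\mathcal{E}_1(u-g_k)\to 0$, hence $\mathcal{E}(u-g_k)\to 0$. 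Applying (\ref{E:resistanceest}) to the function $u-g_k$ and using $g_k(x)=g_k(y)$ yields
\[|u(x)-u(y)|^2=|(u-g_k)(x)-(u-g_k)(y)|^2\leq d_R(x,y)\,\mathcal{E}(u-g_k)\longrightarrow 0,\]
so that $u(x)=u(y)$ for every $u\in\mathcal{F}$. This contradicts (RF3), which guarantees a function in $\mathcal{F}$ separating the distinct points $x$ and $y$. Therefore no such pair exists and $(f_n)_n$ is point separating.

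I do not expect a genuine obstacle here; the argument is short and structural. The only point deserving care is the legitimacy and stability of pointwise evaluation under $\mathcal{E}_1$-approximation, and this is exactly where the hypotheses enter: compactness of $(X,d_R)$ secures $\mathcal{F}\subset C(X)$, while the resistance estimate (\ref{E:resistanceest}) converts energy convergence into pointwise control, allowing the equality $g(x)=g(y)$ on the dense span to pass to the $\mathcal{E}_1$-limit. The finiteness and energy dominance of $m$ serve only to make the notion of a coordinate sequence available in the first place and are not otherwise used in the proof.
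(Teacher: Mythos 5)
Your proof is correct and follows the same route as the paper's: assume $f_n(x)=f_n(y)$ for all $n$, pass by linearity and $\mathcal{E}_1$-approximation through the resistance estimate (\ref{E:resistanceest}) to conclude $u(x)=u(y)$ for all $u\in\mathcal{F}$, and derive a contradiction (the paper phrases it as contradicting $d_R(x,y)>0$, which is equivalent to your appeal to (RF3)). Your write-up simply spells out the details that the paper compresses into one sentence.
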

\begin{proof}
Assume that $x,y\in X$ are two distinct points with $f_n(x)=f_n(y)$ for all $n$. Then $f(x)=f(y)$ for all $f\in\mathcal{F}$ by linearity, approximation and (\ref{E:resistanceest}), contradicting $d_R(x,y)>0$.
\end{proof}

We introduce yet another metric $d_\phi$, now in terms of coordinates. For a fixed point separating coordinate sequence $(f_n)_{n=1}^\infty$ set
\[d_\phi(x,y):=\sup\left\lbrace|f_k(x)-f_k(y)|~:~k=1,2,\ldots\right\rbrace\ ,\ \ x,y\in X.\]
A resistance form $(\mathcal{E},\mathcal{F})$ is called \emph{local} if it is local (and therefore strongly local) in the Dirichlet form sense. Let $(\mathcal{E},\mathcal{F})$ be a local resistance form. If the topologies induced by $d_R$ and $d_{\Gamma,m}$ coincide we may again use \cite[Theorem 5.2]{Sto10} to conclude $(X,d_{\Gamma,m})$ is a length space. In order to ensure the required coincidence of topologies we will now compare the metrics $d_R^{1/2}$, $d_{\Gamma,m}$ and $d_\phi$. 

\begin{theorem}\label{T:geodesicresistance}
Suppose $(\mathcal{E},\ff)$ is a local resistance form on $X$ such that $(X,d_R)$ is compact, let $m$ be an energy dominant measure for $(\mathcal{E},\mathcal{F})$, and assume there exists a coordinate sequence for $(\mathcal{E},\mathcal{F})$ with respect to $m$. Then the topologies induced by $d_R$ and $d_{\Gamma,m}$ coincide, and $(X,d_{\Gamma,m})$ is a length space.
\end{theorem}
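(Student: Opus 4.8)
The plan is to derive the coincidence of topologies from a two-sided comparison of $d_{\Gamma,m}$ with the resistance metric, and then to read off the length-space property from Stollmann's theorem. The decisive tool, unavailable for general Dirichlet forms, is the resistance estimate (\ref{E:resistanceest}): together with compactness it converts the pointwise density constraint $\Gamma(f)\le m$ into a uniform modulus of continuity.

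First I would record the upper bound $d_{\Gamma,m}\le m(X)^{1/2}\,d_R^{1/2}$. Since $(X,d_R)$ is compact, taking $K=X$ in the definition of $\ff_\loc$ shows $\ff_\loc=\ff$, and by (\ref{E:resistanceest}) we have $\ff\subset C(X)$; hence the set $\mathcal{A}$ of (\ref{E:A}) is simply $\{f\in\ff:\Gamma(f)\in L_\infty(X,m)\}$. For an admissible $f$, i.e.\ one with $\Gamma(f)\le m$, finiteness of $m$ controls the total energy, $\mathcal{E}(f)=\Gamma(f)(X)=\int_X (d\Gamma(f)/dm)\,dm\le m(X)$, so (\ref{E:resistanceest}) gives $|f(x)-f(y)|^2\le m(X)\,d_R(x,y)$; taking the supremum over such $f$ in (\ref{E:CCmetric}) yields the claimed bound.

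Next I would use the coordinate sequence for the reverse control. By Lemma~\ref{L:separatepoints} the sequence $(f_n)_n$ is point separating, and since each $f_n\in C(X)$ satisfies $\Gamma(f_n)\le m$, both $f_n$ and $-f_n$ compete in (\ref{E:CCmetric}); hence $|f_n(x)-f_n(y)|\le d_{\Gamma,m}(x,y)$ for every $n$. In particular $d_{\Gamma,m}(x,y)>0$ whenever $x\ne y$, so $d_{\Gamma,m}$ is a genuine finite metric and $(X,d_{\Gamma,m})$ is Hausdorff. Combining with the upper bound, the identity map $\mathrm{id}\colon (X,d_R)\to(X,d_{\Gamma,m})$ is continuous, because $d_R^{1/2}$ induces the topology of $d_R$. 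A continuous bijection from the compact space $(X,d_R)$ onto a Hausdorff space is a homeomorphism, so the two topologies coincide.

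With the topologies identified, the length-space property follows from the literature. Taking $m$ itself as reference measure (it is finite, Borel regular and charges every nonempty open set) makes $(\mathcal{E},\ff)$ a regular, strongly local (by locality of the resistance form) Dirichlet form on $L_2(X,m)$ for which $m$ is energy dominant; since the topologies of $d_R$ and $d_{\Gamma,m}$ agree and a point separating coordinate sequence exists, the theorem of Stollmann \cite[Theorem 5.2]{Sto10}, resting on the Rademacher-type theorem \cite[Theorem 4.9]{FLW}, shows that $(X,d_{\Gamma,m})$ is a length space. I expect the only genuinely delicate point to be the first step: recognizing that the resistance estimate upgrades the $m$-a.e.\ density constraint into the uniform oscillation bound $|f(x)-f(y)|^2\le m(X)\,d_R(x,y)$, which is exactly what compactness (finiteness of $m(X)$) makes usable and what lets us bypass the abstract compactness hypothesis of Theorem~\ref{T:general}.
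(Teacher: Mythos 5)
Your proof is correct and follows essentially the same route as the paper: it sandwiches $d_{\Gamma,m}$ between the coordinate metric $d_\phi$ (via Lemma~\ref{L:separatepoints} and the fact that $\pm f_n$ compete in (\ref{E:CCmetric})) and a multiple of $d_R^{1/2}$ (via (\ref{E:resistanceest})), uses compactness and point separation to identify the topologies, and then invokes Stollmann's theorem exactly as the paper does. Your explicit constant $m(X)^{1/2}$ is in fact slightly more careful than the paper's unnormalized inclusion $\left\lbrace f:\Gamma(f)\leq m\right\rbrace\subset\left\lbrace f:\mathcal{E}(f)\leq 1\right\rbrace$, and your ``continuous bijection from a compact space onto a Hausdorff space'' shortcut is just a packaged form of the paper's explicit sequential-compactness argument for the same coincidence of topologies.
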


\begin{proof}
Let $(f_n)_{n=1}^\infty$ be a coordinate sequence for $(\mathcal{E},\ff)$. Because 
\[(f_n)_{n=1}^\infty\subset \left\lbrace f\in \mathcal{A}: \Gamma(f)\leq m\right\rbrace\subset \left\lbrace f \in\mathcal{F}\ | \ \mathcal{E}(f)\leq 1\right\rbrace,\]
the suprema over these sets increase, thus
\[d_\phi(x,y) \leq d_{\Gamma,m}(x,y) \leq d_R^{1/2}(x,y)\]
for all $x,y\in X$, and the open $d_\phi$-ball centered at $x_0$ with radius $r>0$ contains the open $d_R^{1/2}$-ball centered at $x_0$ with radius $r$, hence also an open $d_R$-ball centered at $x_0$. For arbitrary fixed $x_0$ on the other hand there must be some $\varepsilon>0$ such that an open $d_\phi$-ball centered at $x_0$ with radius $\varepsilon$ is contained in the open $d_R$-ball of radius one centered at $x_0$. If not, then we could find a sequence $(x_k)_{k=1}^\infty$ that does not converge in the $d_R$-ball with respect to $d_{R}$ but converges to $x_0$ with respect to $d_\phi$. Then by the definition of $d_\phi$ all the differences $|f_n(x_k)-f_n(x_0)|$ would go to zero. By compactness there exist a subsequence $(x_{k_l})_{l=1}^\infty$ of $(x_k)_{k=1}^\infty$ and some $x_\infty\neq x_0$ to which $(x_{k_l})_{l=1}^\infty$ converges with respect to $d_R$. The continuity of the coordinates $f_n$ with respect to $d_R$ implies that also $|f_n(x_{k_l})-f_n(x_\infty)|$ goes to zero for all $n$, what is a contradiction, because $(f_n)_n$ separates points.
\end{proof}

A metric space $(X,d)$ is called \emph{geodesic} if for any two distinct points $x,y\in X$ there exists a path $\gamma$ of length $d(x,y)$. If a metric space is a length space and complete, then it is a geodesic space, as was shown in \cite[Theorem 1]{Stu95}. 

\begin{corollary}
Suppose $(\mathcal{E},\ff)$ is a local resistance form on $X$ such that $(X,d_R)$ is compact, let $m$ be an energy dominant measure for $(\mathcal{E},\mathcal{F})$, and assume there exists a coordinate sequence for $(\mathcal{E},\mathcal{F})$ with respect to $m$. Then the space $(X,d_{\Gamma,m})$ is compact and therefore complete and geodesic.
\end{corollary}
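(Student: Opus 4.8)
The plan is to observe that the hypotheses of this corollary are word-for-word those of Theorem \ref{T:geodesicresistance}, so that essentially all of the substantive work has already been carried out. First I would invoke Theorem \ref{T:geodesicresistance} directly to obtain two conclusions at once: that the topologies induced by $d_R$ and by $d_{\Gamma,m}$ coincide, and that $(X,d_{\Gamma,m})$ is a length space.

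Next I would transfer compactness from $d_R$ to $d_{\Gamma,m}$. By hypothesis $(X,d_R)$ is compact, and compactness is a purely topological property; since Theorem \ref{T:geodesicresistance} establishes that the $d_R$- and $d_{\Gamma,m}$-topologies agree, the identity map $(X,d_R)\to(X,d_{\Gamma,m})$ is a homeomorphism, and it carries the compact-open-cover property across. Hence $(X,d_{\Gamma,m})$ is compact. This is the one step meriting a moment of care: it is essential that Theorem \ref{T:geodesicresistance} yields a genuine \emph{equality} of topologies rather than merely the one-sided chain of metric comparisons $d_\phi\leq d_{\Gamma,m}\leq d_R^{1/2}$ used in its proof, since the latter alone would not suffice to move compactness in the desired direction.

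Finally I would chain the two remaining implications. A compact metric space is automatically complete, so $(X,d_{\Gamma,m})$ is complete; combining completeness with the length space property already in hand, the result of Sturm quoted above (\cite[Theorem 1]{Stu95}) shows that $(X,d_{\Gamma,m})$ is geodesic, i.e.\ any two distinct points $x,y$ are joined by a path of length exactly $d_{\Gamma,m}(x,y)$. There is no serious obstacle here, as all the analytic content resides in Theorem \ref{T:geodesicresistance}; the only point deserving attention is the topological transfer of compactness described above.
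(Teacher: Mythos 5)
Your argument is correct and reaches the conclusion by valid steps, but the one point you single out as ``meriting a moment of care'' is actually backwards, and it is worth correcting because the paper's own proof is precisely the step you dismiss. The paper proves the corollary in one line: $(X,d_R)$-compactness gives $(X,d_R^{1/2})$-compactness, and then the one-sided inequality $d_{\Gamma,m}\leq d_R^{1/2}$ alone transfers compactness to $(X,d_{\Gamma,m})$. This \emph{does} move compactness in the desired direction: the inequality says the $d_{\Gamma,m}$-topology is coarser than the $d_R^{1/2}$-topology, so the identity map from the compact space $(X,d_R^{1/2})$ to $(X,d_{\Gamma,m})$ is continuous (indeed $1$-Lipschitz), and the continuous image of a compact space is compact; equivalently, any $d_R^{1/2}$-convergent subsequence is automatically $d_{\Gamma,m}$-convergent. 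Compactness always descends to coarser topologies --- it is the reverse transfer (from the coarser to the finer topology) that would require the full equality of topologies. So your route, which invokes the topological coincidence from Theorem \ref{T:geodesicresistance}, is valid but uses a strictly stronger input than necessary; the paper's argument is more economical and does not need the coincidence of topologies for the compactness step (only for the length-space step, which both proofs take from Theorem \ref{T:geodesicresistance}). The remaining chain --- compact implies complete, and complete length space implies geodesic by \cite[Theorem 1]{Stu95} --- matches the paper.
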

\begin{proof}
The Corollary follows because $(X,d_R)$-compactness implies $(X,d_R^{1/2})$-compactness and $d_R^{1/2}\geq d_{\Gamma,m}$.
\end{proof}

\begin{remark}
The converse conclusion is not valid: There are spaces carrying a regular resistance form that are not $d_R$-compact but can be equipped with a measure $m$ such that they become $d_{\Gamma,m}$-compact. See Example \ref{Ex:notvalid} below.
\end{remark}

A special situation arises if $(\mathcal{E},\mathcal{F})$ is a resistance form on a dendrite, \cite{Kig95}. A \emph{dendrite} (or \emph{tree}) is an arcwise connected topological space that has no subset homeomorphic to a circle, cf. \cite[Definition 0.6]{Kig95}. Given two points $x,y$ in a dendrite $X$ there exists a unique (up to reparametrization) path $\gamma_{x,y}:[0,1]\to X$ such that $\gamma_{x,y}(0)=x$ and $\gamma_{x,y}(1)=y$. A metric $d$ on a dendrite $X$ is called a \emph{shortest path metric} if for any $x,y\in X$ and any $z\in \gamma_{x,y}([0,1])$ we have 
\[d(x,y)=d(x,z)+d(z,y).\]

\begin{theorem}\label{T:dendrite}
Suppose that $X$ is a dendrite, $(\mathcal{E},\ff)$ is a local resistance form on $X$ such that $(X,d_R)$ is compact, $m$ is a finite energy dominant measure for $(\mathcal{E},\mathcal{F})$, and assume there exists a coordinate sequence for $(\mathcal{E},\mathcal{F})$ with respect to $m$. Then $d_{\Gamma,m}$ itself is a shortest path metric. 
\end{theorem}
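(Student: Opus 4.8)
The goal is to show, for arbitrary $x,y\in X$ and any $z\in\gamma_{x,y}([0,1])$, that
\[d_{\Gamma,m}(x,y)=d_{\Gamma,m}(x,z)+d_{\Gamma,m}(z,y).\]
The inequality ``$\le$'' is just the triangle inequality for the metric $d_{\Gamma,m}$, so the entire problem reduces to the reverse estimate. The plan is to exploit that, under the present hypotheses, $(X,d_{\Gamma,m})$ is a geodesic space: by Theorem \ref{T:geodesicresistance} the topologies induced by $d_R$ and $d_{\Gamma,m}$ coincide and $(X,d_{\Gamma,m})$ is a length space; since $(X,d_R)$ is compact, $(X,d_{\Gamma,m})$ is compact as well, hence complete, and a complete length space is geodesic by \cite[Theorem 1]{Stu95}. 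Thus for fixed $x,y$ I may choose a path $\gamma\colon[a,b]\to X$ with $\gamma(a)=x$, $\gamma(b)=y$ and $l(\gamma)=d_{\Gamma,m}(x,y)$, the length being measured with respect to $d_{\Gamma,m}$.

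The key geometric input is that this minimizing path must pass through $z$. Since $(X,d_{\Gamma,m})$ is a metric space it is Hausdorff, and in a Hausdorff space any two points joined by a path are joined by an \emph{arc} whose image is contained in the image of that path. Applying this to $\gamma$ produces an injective path from $x$ to $y$ inside $\gamma([a,b])$; by the defining uniqueness of arcs between two points of a dendrite (recalled from \cite{Kig95}) this arc must coincide with $\gamma_{x,y}$. Hence $\gamma_{x,y}([0,1])\subseteq\gamma([a,b])$, and in particular $z=\gamma(c)$ for some $c\in[a,b]$.

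It then remains to split the length of $\gamma$ at the parameter $c$. Length is additive under subdivision, so $l(\gamma)=l(\gamma|_{[a,c]})+l(\gamma|_{[c,b]})$, and each piece dominates the corresponding distance, $l(\gamma|_{[a,c]})\ge d_{\Gamma,m}(x,z)$ and $l(\gamma|_{[c,b]})\ge d_{\Gamma,m}(z,y)$. Combining these,
\[d_{\Gamma,m}(x,y)=l(\gamma)\ge d_{\Gamma,m}(x,z)+d_{\Gamma,m}(z,y),\]
which is exactly the missing inequality; together with the triangle inequality this yields the asserted additivity, so $d_{\Gamma,m}$ is a shortest path metric.

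I expect the only genuinely delicate point to be the topological step that places $z$ on the minimizing path. It rests on two facts: the classical ``path implies arc'' theorem in Hausdorff spaces, and the uniqueness of arcs between two points of a dendrite. Everything else is soft geodesic-space bookkeeping. Notably, one does not even need the full geodesic property here: a length-space argument using paths $\gamma$ with $l(\gamma)<d_{\Gamma,m}(x,y)+\varepsilon$ and a passage $\varepsilon\to0$ would give the same conclusion, but working with an actual minimizing geodesic keeps the estimate free of limiting arguments.
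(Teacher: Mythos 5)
Your proof is correct and follows essentially the same route as the paper's: both rest on the length-space property from Theorem \ref{T:geodesicresistance}, the uniqueness of arcs in a dendrite, and the additivity of path length under subdivision. The only difference is cosmetic --- you realize an actual minimizing geodesic and make explicit the classical ``path contains an arc'' reduction that forces it through $z$, whereas the paper works directly with the unique arc $\gamma_{x,y}$ and the identity $d_{\Gamma,m}(x,y)=l(\gamma_{x,y})$, leaving that topological step implicit.
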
 

\begin{proof}
By Theorem \ref{T:geodesicresistance} the space $(X,d_{\Gamma,m})$ is a length space. Hence if $x,y\in X$ are two distinct points and $\gamma_{x,y}$ is the unique path such that $\gamma_{x,y}(0)=x$ and $\gamma_{x,y}(1)=y$, we have $d_{\Gamma,m}(x,y)=l(\gamma_{x,y})$. If $z$ is yet another point on $\gamma_{x,y}([0,1])$ then we have
\[\gamma_{x,y}(t)=
\begin{cases} \gamma_{x,z}(2t)\ \ 0\leq t\leq \frac{1}{2}\\
\gamma_{z,y}(2t)\ \ \frac{1}{2}\leq t\leq 1,
\end{cases}
\]
where $\gamma_{x,z}$ and $\gamma_{z,y}$ are the uniquely determined paths joining $x$ and $z$, respectively $z$ and $y$, and the additivity of the path length yields
\[d_{\Gamma,m}(x,y)=l(\gamma_{x,y})=l(\gamma_{x,z})+l(\gamma_{z,y})=d_{\Gamma,m}(x,z)+d_{\Gamma,m}(z,y).\]
\end{proof}

\begin{remark}
Kigami has shown in \cite[Proposition 5.1]{Kig95} that any shortest path metric $d$ on a dendrite $X$ is a resistance metric on $X$. Under additional assumptions this implies that $d$ is a resistance metric  associated to a regular resistance form, \cite[Theorem 5.4]{Kig95}.
In the situation of Theorem \ref{T:dendrite} it follows that $d_{\Gamma,m}$ is a resistance metric in the sense of \cite[Definition 2.3.2]{Kig01}.
\end{remark}

\begin{examples}\label{Ex:notvalid}
Consider a dendrite $X$ consisting of countably many copies $[p,q_n]$, $n\in\mathbb{N}$, of the unit interval $[0,1]$, glued together at the left interval end point $p$. Set 
\[\mathcal{F}:=\left\lbrace f:X\to\mathbb{R}: \text{ $f$ is absolutely continuous on each $[p,q_n]$ and} \sum_n\int_p^{q_n} |f'(x)|^2dx<\infty\right\rbrace\]
and 
\[\mathcal{E}(f):=\sum_n\int_p^{q_n} |f'(x)|^2dx, \ \ f\in\mathcal{F}.\]
Then $\mathcal{E}$ is a resistance form on $X$. Obviously $d_R(p,q_n)=1$, and for distinct $m,n\in\mathbb{N}$ we have
\[d_R(q_m,q_n)=2.\]
Hence the sequence $(q_n)_n$ is $d_R$-bounded but has no $d_R$-convergent subsequence, so $X$ is not $d_R$-compact. On the other hand we can equip $X$ with a suitable measure $m$ such that it becomes $d_{\Gamma,m}$-compact. Let $(a_n)_n$ be a bounded sequence of positive real numbers converging to zero and set
\[m:=\sum_n a_n dx|_{[p,q_n]}.\]
Then $m$ is a finite energy dominant measure for $(\mathcal{E},\mathcal{F})$ and on $[p,q_m]$ the density of $\Gamma(f)$ with respect to $m$ 
is given by $a_m^{-1}|f'|^2$. Now let $(p_n)_n$ be a $d_{\Gamma,m}$-bounded sequence. If it has a subsequence that is entirely contained in one segment $[p, q_m]$ then it has a subsequence $(p_{n_k})_k$ that converges in Euclidean metric to some point $q\in[p,q_m]$. For any function $f\in\mathcal{F}$ with $\Gamma(f)\leq m$ we have $|f'| \leq \sqrt{a_m}$ a.e. on $[p,q_m]$ and therefore 
$|f(q)-f(p_{n_k})|\leq \sqrt{a_m}|q-p_{n_k}|$. Hence $(p_{n_k})_k$ $d_{\Gamma,m}$-converges to $q$. If no subsequence of $(p_n)_n$ is contained in a single segment then there must be a subsequence $(p_{n_k})_k$ such that $p_{n_k}\in [p, q_k]$ for any $k$. For any $k$ there is some $f$ with $|f_k'| \leq \sqrt{a_k}$ a.e. on $[p,q_k]$ such that 
\[d_{\Gamma,m}(p,p_{n_k})\leq |f_k(p)-f_k(p_{n_k})|+\frac1k\leq \sqrt{a_k}+\frac1k\ ,\]
hence $(p_{n_k})_k$ is $d_{\Gamma,m}$-convergent to $p$.
\end{examples}

\section[Dirac operators]{Dirac operators}\label{S:Dirac}

In this section we introduce Dirac operators and spectral triples related to Dirichlet forms. Our considerations are based on the first order theory proposed by Cipriani and Sauvageot in \cite{CS03, CS09, Sau89, Sau90} and developed in \cite{CGIS12, HRT, IRT}. Related constructions can be found in \cite{Eb99, W99, W00}.

As in Section \ref{S:Dirichlet} let $X$ be a locally compact separable metric space and $\mu$ be a nonnegative Radon measure on $X$ with $\mu(U)>0$ for any nonempty open $U\subset X$. Let $(\mathcal{E},\mathcal{F})$ be a regular symmetric Dirichlet form on $L_2(X,\mu)$. According to the Beurling-Deny decomposition the form $\mathcal{E}$ uniquely decomposes
into a strongly local, a pure jump and a killing part, see \cite[Theorem 3.2.1]{FOT94}. In this section we assume that the killing part of $\mathcal{E}$ is zero. Recall that we write $\mathcal{C}:=C_c(X)\cap \mathcal{F}$ and that the mutual energy measure of two functions $f,g\in\mathcal{C}$ is denoted by $\Gamma(f,g)$.  We equip the space $\mathcal{C}\otimes\mathcal{C}$ with a bilinear form, determined by
\[\left\langle a\otimes b,c\otimes d\right\rangle_\mathcal{H}=\int_Xbd\:d\Gamma(a,c).\]
The right hand side is the integral of the product $bd\in\mathcal{C}$ with respect to the mutual energy measure $\Gamma(a,c)$ of $a$ and $c$. This bilinear form is nonnegative definite, hence it defines a seminorm on $\mathcal{C}\otimes\mathcal{C}$. Let $\mathcal{H}$ denote the Hilbert space obtained by first factoring out zero seminorm elements and then completing. Following \cite{CS03} we
refer to it as the \emph{space of differential $1$-forms} associated with $(\mathcal{E},\mathcal{F})$. 

\begin{examples}\label{Ex:classical}
If $X=M$ is a smooth compact Riemannian manifold without boundary and $dvol$ the Riemannian volume on $M$, then the closure in $L_2(M, dvol)$ of 
\[\mathcal{E}(f)=\int_M \left\|df\right\|^2_{T^\ast M}\:dvol,\ \ f\in C^\infty(M),\]
where $df$ denotes the exterior derivative of $f$, is a strongly local Dirichlet form $(\mathcal{E},\mathcal{F})$ on $L_2(M, dvol)$. For a simple tensor $f\otimes g\in\mathcal{C}\otimes\mathcal{C}$ we observe that 
\[\left\|f\otimes g\right\|_\mathcal{H}^2=\int_M\left\|gdf\right\|_{T^\ast M}^2dvol.\]
In this case $\mathcal{H}$ is isometrically isomorphic to the space $L_2(M, T^\ast M, dvol)$ of $L_2$-differential $1$-forms.
\end{examples}

The space $\mathcal{H}$ can be made into a $\mathcal{C}$-$\mathcal{C}$-bimodule:  Setting 
\begin{equation}\label{E:actionsofC}
c(a\otimes b):=(ac)\otimes b-c\otimes (ab)\ \ \text{ and }\ \ (a\otimes b)c:=a\otimes (bc)
\end{equation}
for $a,b,c\in\mathcal{C}$ and extending linearly we observe the bounds
\[\left\|\sum_{i=1}^n c(a_i\otimes b_i)\right\|_\mathcal{H}\leq \sup_X |c| \left\|\sum_{i=1}^n a_i\otimes b_i\right\|_\mathcal{H}\]
and
\[\left\|\sum_{i=1}^n (a_i\otimes b_i)c\right\|_\mathcal{H}\leq \sup_X|c| \left\|\sum_{i=1}^n a_i\otimes b_i\right\|_\mathcal{H},\] 
and by continuity we can extend further to obtain uniformly bounded left and right actions of $\mathcal{C}$ on $\mathcal{H}$. See \cite{CS03, IRT}. 

The definition $$\partial a:=a\otimes\mathbf{1}$$ yields a derivation operator $\partial:\mathcal{C}\to\mathcal{H}$ such that 
\[\left\|\partial a\right\|^2_\mathcal{H}= \mathcal{E}(a)\] 
and the Leibniz rule holds,
\begin{equation}\label{E:Leibniz}
\partial(ab)=a\partial b+(\partial a)b, \ \ a,b\in\mathcal{C}.
\end{equation}

\begin{remark}
By approximation, the right action is also well defined for elements $c$ of the space $\mathcal{B}_b(X)$ of bounded Borel functions on $X$, and the space $\mathcal{H}$ agrees with the Hilbert space obtained by factoring $\mathcal{B}_b(X)\otimes \mathcal{C}$ and completing similarly as before.
\end{remark}

\begin{remark}
We give a short comment concerning the above construction in the case of purely non local Dirichlet forms. For simplicity assume that $X$ is compact such that $\mathbf{1}\in\mathcal{C}$.
A customary algebraic standard definition is to consider the tensor product $\mathcal{C}\otimes\mathcal{C}$, endowed with the $\mathcal{C}$-actions $c(a\otimes b):=(ca)\otimes b$ and $(a\otimes b)c:=a\otimes (bc)$, and a derivation $d:\mathcal{C}\to\mathcal{C}\otimes\mathcal{C}$, given by $da:=a\otimes \mathbf{1}-\mathbf{1}\otimes a$ up to a sign convention, see e.g. \cite{GVF}. As $\mathcal{C}$ is an algebra of functions, we have $(a\otimes b)(x,y)=a(x)b(y)$ for any $x,y,\in X$, and in particular
\[(cda)(x,y)=c(x)(a(x)-a(y)) \ \text{ and }\ \ ((da)c)(x,y)=(a(x)-a(y))c(y).\]
The difference of $(da)c-(\partial a)c$ has zero  
seminorm, $\left\|\mathbf{1}\otimes (ac)\right\|_\mathcal{H}^2=0$. The definition of the left action in (\ref{E:actionsofC}) produces the Leibniz rule (\ref{E:Leibniz}), note also that $c\partial a$, defined according to (\ref{E:actionsofC}), agrees in $\mathcal{C}\otimes\mathcal{C}$ with $cda$, defined using the left action in the present remark.
If for instance $(\mathcal{E},\mathcal{F})$ is a purely nonlocal Dirichlet form with jump measure $J$, cf. \cite[Theorem 3.2.1]{FOT94},
\[\mathcal{E}(f)=\frac12\int_X\int_X (f(x)-f(y))^ 2\:J(dxdy),\ \ f\in\mathcal{F},\]
then 
\[\left\|g\partial f\right\|_\mathcal{H}^ 2=\frac12\int_X g(x)^ 2\int_X (f(x)-f(y))^ 2\:J(dxdy),\]
and the difference $df$, given by $df(x,y)=f(x)-f(y)$,  is a representative of the $\mathcal{H}$-equivalence class $\partial f$. If moreover the jump measure $J$ is concentrated on 
\[\left\lbrace (x,y)\in X\times X: 0<d(x,y)<\varepsilon\right\rbrace,\]
$f$ is supported in a bounded set $A\subset X$ and $g$ is supported outside $\left\lbrace x\in X: \dist(x,A)<\varepsilon\right\rbrace$, then $g\partial f$ is zero in $\mathcal{H}$.
\end{remark}

Let the space $\mathcal{C}$ be equipped with the norm $\left\|f\right\|_{\mathcal{C}}:=\mathcal{E}_1(f)^{1/2}+\sup_{x\in X}|f(x)|$ and let $\mathcal{C}^\ast$ denote the dual space of $\mathcal{C}$, equipped with the usual norm. Note that $\mathcal{C}\subset L_2(X,\mu)\subset \mathcal{C}^\ast$. We write $\left\langle u,\varphi\right\rangle=u(\varphi)$ to denote the dual pairing of $u\in\mathcal{C}^\ast$ and $\varphi\in\mathcal{C}$. For $\omega\in\mathcal{H}$ let $\partial^\ast\omega$ be the element of $\mathcal{C}^\ast$ defined by 
\[(\partial^\ast\omega)(\varphi):=\left\langle \omega,\partial\varphi\right\rangle_\mathcal{H}, \ \ \varphi\in\mathcal{C}.\]
It is straightforward to see that $\partial^\ast$ is a bounded linear operator $\partial^\ast:\mathcal{H}\to\mathcal{C}^\ast$. The operator $\partial$ extends to a densely defined closed linear operator $\partial: L_2(X,\mu)\to\mathcal{H}$ with domain $dom\:\partial=\mathcal{F}$. The restriction of $\partial^\ast$ to 
\begin{multline}
dom\:\partial^ \ast=\Big\lbrace \omega\in\mathcal{H}: \text{ there exists }u^\ast\in L_2(X,\mu)\text{ such that}\notag\\
\text{ $\left\langle u^\ast,\varphi\right\rangle_{L_2(X,\mu)}=\left\langle \omega,\partial \varphi\right\rangle_\mathcal{H}$ for all $\varphi\in\mathcal{F}$}\Big\rbrace
\end{multline}
is the adjoint of $\partial$, i.e. the unbounded linear operator $\partial^\ast:\mathcal{H}\to L_2(X,\mu)$ such that for all $\omega\in dom\:\partial^\ast$ we have
\begin{equation}\label{E:IbP}
\left\langle \partial^\ast \omega, \varphi\right\rangle_{L_2(X,\mu)}=\left\langle \omega, \partial \varphi\right\rangle_\mathcal{H}, \ \ \varphi\in\mathcal{F}.
\end{equation}
By general theory $(\partial^\ast, dom\:\partial^\ast)$ is closed and densely defined. 

\begin{examples}
In the situation of Examples \ref{Ex:classical} the operator $\partial$ coincides with the exterior derivative $d$, seen as an unbounded closed linear operator from $L_2(M, dvol)$ into $L_2(M, T^\ast M, dvol)$.
\end{examples} 

Let $(L, dom\:L)$ denote the infinitesimal $L_2(X,\mu)$-generator of $(\mathcal{E},\mathcal{F})$, i.e. the nonpositive definite self-adjoint operator $L$ on $L_2(X,\mu)$ with domain $dom\:L\subset \mathcal{F}$ such that 
\[\mathcal{E}(f,g)=-\left\langle f, Lg\right\rangle_{L_2(X,\mu)}\]
for all $f\in\mathcal{F}$ and $g\in dom\:L$. Note that $\partial^\ast\partial g=-L g$, $g\in dom\:L$, as was already proved in \cite{CS09}. The image $Im\:\partial$ of $\partial$ is a closed subspace of $\mathcal{H}$: We have
\[ker\:L=\left\lbrace f\in L_2(X,\mu): \mathcal{E}(f)=0\right\rbrace,\]
and $\mathcal{F}$ decomposes orthogonally into $ker\:L$ and its complement in $\mathcal{F}$,
\[\mathcal{F}= ker\:L\oplus (\ker\:L)^\bot_\mathcal{F}.\]
The space $((\ker\:L)^\bot_\mathcal{F},\mathcal{E})$ is Hilbert, and therefore the the image of $(\ker\:L)^\bot_\mathcal{F}$ under $\partial$ is a closed subspace of $\mathcal{H}$. However, as $\partial f=0$ for all $f\in ker\:L$, this image is just $Im\:\partial$.
Consequently $\mathcal{H}$ decomposes orthogonally into $Im\:\partial$ and $ker\:\partial^\ast$,
\[\mathcal{H}=Im\:\partial \oplus ker\:\partial^\ast,\]
and we have $dom\:\partial^\ast=\left\lbrace \partial f: f\in dom\:L\right\rbrace \oplus ker\:\partial^\ast$.

From now on we consider the natural complexifications of $L_2(X,\mu)$, $\mathcal{E}$, $\mathcal{F}$, $\Gamma$, $\mathcal{H}$, $\mathcal{C}$ and the operators $\partial$ and $\partial^\ast$, and for simplicity we denote them by the same symbols. The algebra $\mathcal{C}$ becomes involutive by complex conjugation. 

The Hilbert space
\[\mathbb{H}:=L_2(X,\mu)\oplus \mathcal{H}\]
carries the natural scalar product 
\[\left\langle (f, \omega), (g, \eta)\right\rangle_{\mathbb{H}}:=\left\langle f,g\right\rangle_{L_2(X,\mu)}+\left\langle \omega,\eta\right\rangle_{\mathcal{H}}.\] 
Put $dom\:\mathbb{D}:=\mathcal{F}\oplus dom\:\partial^\ast$ and define an unbounded linear operator $\mathbb{D}:\mathbb{H}\to\mathbb{H}$ by 
\begin{equation}\label{E:Diracdef}
\mathbb{D}(f,\omega):=(\partial^\ast\omega, \partial f), \ \ (f,\omega)\in dom\:\mathbb{D}.
\end{equation}
To $\mathbb{D}$ we refer as the \emph{Dirac operator} associated with $(\mathcal{E},\mathcal{F})$. In matrix notation its definition reads
\[\mathbb{D}=\left(\begin{array}{rr}
0 &\partial^\ast\\
\partial &0
\end{array}\right).\]
This definition of a Dirac operator follows sign and complexity conventions often used in geometry and differs slightly from the definition in \cite{HTb}. 

\begin{lemma}
The operator $(\mathbb{D}, dom\:\mathbb{D})$ is self-adjoint on $\mathbb{H}$.
\end{lemma}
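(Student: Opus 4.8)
The plan is to exploit the off-diagonal block structure of $\mathbb{D}$ together with the fact, established just above, that $\partial^\ast$ is the Hilbert space adjoint of the closed, densely defined operator $\partial$. First I would check that $\mathbb{D}$ is symmetric. For $(f,\omega),(g,\eta)\in dom\:\mathbb{D}=\mathcal{F}\oplus dom\:\partial^\ast$ one has
\[\left\langle \mathbb{D}(f,\omega),(g,\eta)\right\rangle_{\mathbb{H}}=\left\langle \partial^\ast\omega,g\right\rangle_{L_2(X,\mu)}+\left\langle \partial f,\eta\right\rangle_{\mathcal{H}}.\]
Since $g\in\mathcal{F}=dom\:\partial$ and $\omega\in dom\:\partial^\ast$, the defining identity (\ref{E:IbP}) gives $\left\langle \partial^\ast\omega,g\right\rangle_{L_2(X,\mu)}=\left\langle \omega,\partial g\right\rangle_{\mathcal{H}}$, and likewise $\left\langle \partial f,\eta\right\rangle_{\mathcal{H}}=\left\langle f,\partial^\ast\eta\right\rangle_{L_2(X,\mu)}$ because $f\in dom\:\partial$ and $\eta\in dom\:\partial^\ast$. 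Rearranging shows the expression equals $\left\langle (f,\omega),\mathbb{D}(g,\eta)\right\rangle_{\mathbb{H}}$, so that $\mathbb{D}\subseteq\mathbb{D}^\ast$.

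It then remains to prove the reverse inclusion $dom\:\mathbb{D}^\ast\subseteq dom\:\mathbb{D}$. I would take $(g,\eta)\in dom\:\mathbb{D}^\ast$, so that there is $(h,\zeta)\in\mathbb{H}$ with $\left\langle \mathbb{D}(f,\omega),(g,\eta)\right\rangle_{\mathbb{H}}=\left\langle (f,\omega),(h,\zeta)\right\rangle_{\mathbb{H}}$ for every $(f,\omega)\in dom\:\mathbb{D}$. Choosing $f=0$ and letting $\omega$ range over $dom\:\partial^\ast$ yields $\left\langle \partial^\ast\omega,g\right\rangle_{L_2(X,\mu)}=\left\langle \omega,\zeta\right\rangle_{\mathcal{H}}$ for all such $\omega$; by the definition of the adjoint this means $g\in dom\:\partial^{\ast\ast}$ with $\partial^{\ast\ast}g=\zeta$. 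Choosing $\omega=0$ and letting $f$ range over $\mathcal{F}=dom\:\partial$ gives $\left\langle \partial f,\eta\right\rangle_{\mathcal{H}}=\left\langle f,h\right\rangle_{L_2(X,\mu)}$ for all $f\in dom\:\partial$, which means $\eta\in dom\:\partial^\ast$ with $\partial^\ast\eta=h$. Because $\partial$ is closed (and densely defined) we have $\partial^{\ast\ast}=\partial$, hence $g\in dom\:\partial=\mathcal{F}$ and $\partial g=\zeta$. Thus $(g,\eta)\in\mathcal{F}\oplus dom\:\partial^\ast=dom\:\mathbb{D}$ and $\mathbb{D}^\ast(g,\eta)=(h,\zeta)=(\partial^\ast\eta,\partial g)=\mathbb{D}(g,\eta)$, so $\mathbb{D}^\ast\subseteq\mathbb{D}$ and therefore $\mathbb{D}=\mathbb{D}^\ast$.

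This is a routine instance of the general fact that for any closed densely defined operator $T$ between Hilbert spaces the block operator $\left(\begin{smallmatrix}0&T^\ast\\ T&0\end{smallmatrix}\right)$ is self-adjoint on the direct sum, so no genuine obstacle arises. The one point that must be handled with care is the closedness of $\partial$: it is precisely closedness that forces $\partial^{\ast\ast}=\partial$ and thereby the crucial conclusion $g\in\mathcal{F}$. Without it one would only recover $g\in dom\:\partial^{\ast\ast}$, i.e. membership in the domain of the closure, and $dom\:\mathbb{D}^\ast$ could be strictly larger than $dom\:\mathbb{D}$. An equally short alternative would be to verify symmetry as above and then check that $\mathbb{D}+i$ and $\mathbb{D}-i$ have full range, which reduces to inverting the nonnegative self-adjoint operators $\partial^\ast\partial+1$ and $\partial\partial^\ast+1$; I would prefer the adjoint-domain computation, since it avoids these inversions.
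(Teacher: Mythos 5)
Your argument is correct and is essentially the paper's proof written out in full: both rest on the symmetry of the off-diagonal block operator and on the closedness of $\partial$, which forces $\partial^{\ast\ast}=\partial$. The only difference is in how the argument is closed — the paper cites \cite{HTb} for the symmetry computation and then appeals to the criterion that a closed symmetric operator whose adjoint is symmetric is self-adjoint (\cite[Theorem 5.20]{Wei80}), whereas you compute $dom\:\mathbb{D}^\ast$ directly by testing against $(f,0)$ and $(0,\omega)$ separately; your version is more self-contained and makes explicit exactly where the closedness and dense definedness of $\partial$ and $\partial^\ast$ enter, which the paper's sketch leaves implicit.
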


This lemma is not difficult to see: A direct calculation shows that $\mathbb{D}$ is symmetric, see \cite[Theorem 3.1]{HTb}, hence also $\overline{\mathbb{D}}$ is symmetric. By the closedness of $\partial$ and $\partial^\ast$ we have $\overline{\mathbb{D}}=\mathbb{D}$ and due to the matrix structure of $\mathbb{D}$ also $\mathbb{D}^\ast=\overline{\mathbb{D}}$. The symmetry of $\mathbb{D}^\ast$ then implies that $\overline{\mathbb{D}}$ is self-adjoint, see for instance \cite[Theorem 5.20]{Wei80}.

We are particularly interested in the special case where the generator $L$ of $(\mathcal{E},\mathcal{F})$ has pure point spectrum , i.e. there are an increasing sequence $0<\lambda_1\leq \lambda_2\leq ...$ of nonzero eigenvalues $\lambda_i$ of $-L$, with possibly infinite multiplicities taken into account, and an orthonormal basis $\left\lbrace \varphi_j\right\rbrace_{j=1}^\infty$ in $L_2(X,m)$ of corresponding eigenfunctions such that 
\begin{equation}\label{E:specL}
-Lf=\sum_{j=1}^\infty \lambda_j\left\langle f,\varphi_j\right\rangle_{L_2(X,\mu)}\varphi_j, \ \ f\in dom\:L,
\end{equation}
and zero itself may be an eigenvalue of infinite multiplicity. See \cite{RSI}. In this case the Dirac operator $\mathbb{D}$ rewrites as follows.

\begin{lemma}\label{L:Diracrep}
If the generator $L$ of $(\mathcal{E},\mathcal{F})$ has pure point spectrum with spectral representation (\ref{E:specL}) then $\mathbb{D}$ admits the spectral representation
\begin{equation}\label{E:Diracrep}
\mathbb{D}v=\sum_{j=1}^\infty \lambda_j^{1/2}\left\langle v,v_j\right\rangle_\mathbb{H} v_j - \sum_{j=1}^\infty \lambda_j^{1/2} \left\langle v,w_j\right\rangle_\mathbb{H} w_j, \ \ v\in dom\:\mathbb{D},
\end{equation}
where 
\[v_j=\frac{1}{\sqrt{2}}(\varphi_j, \lambda_j^{-1/2} \partial\varphi_j) \ \text{ and }\ \ w_j=\frac{1}{\sqrt{2}}(\varphi_j, -\lambda_j^{-1/2}\partial\varphi_j), \ \ j=1,2,...\]
In general, zero may be an eigenvalue of $\mathbb{D}$ of infinite multiplicity.
\end{lemma}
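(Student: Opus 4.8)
The plan is to show that the vectors $v_j,w_j$ are $\mathbb{D}$-eigenvectors with eigenvalues $\pm\lambda_j^{1/2}$, that $\{v_j\}\cup\{w_j\}$ is orthonormal, and that together with $\ker\mathbb{D}$ it is a complete orthonormal system in $\mathbb{H}$; the formula (\ref{E:Diracrep}) is then just the spectral expansion of the self-adjoint operator $\mathbb{D}$. First I would compute the action of $\mathbb{D}$. Since each eigenfunction $\varphi_j\in\dom L$ satisfies $-L\varphi_j=\lambda_j\varphi_j$ and $\partial^\ast\partial\varphi_j=-L\varphi_j$, one obtains $\partial^\ast(\lambda_j^{-1/2}\partial\varphi_j)=\lambda_j^{1/2}\varphi_j$, so from the matrix form (\ref{E:Diracdef}) directly $\mathbb{D}v_j=\lambda_j^{1/2}v_j$ and $\mathbb{D}w_j=-\lambda_j^{1/2}w_j$. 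Next I would record the identity $\langle\partial\varphi_j,\partial\varphi_k\rangle_\mathcal{H}=\mathcal{E}(\varphi_j,\varphi_k)=\lambda_k\langle\varphi_j,\varphi_k\rangle_{L_2(X,\mu)}=\lambda_k\delta_{jk}$; a short bilinear computation then gives $\langle v_j,v_k\rangle_\mathbb{H}=\langle w_j,w_k\rangle_\mathbb{H}=\delta_{jk}$ and $\langle v_j,w_k\rangle_\mathbb{H}=0$, so the system is orthonormal in $\mathbb{H}$.

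The substantive step is completeness. Put $e_j:=\lambda_j^{-1/2}\partial\varphi_j$; the same identity shows $\{e_j\}$ is orthonormal in $\mathcal{H}$, and I would argue it is an orthonormal basis of $\Im\partial$. Indeed $\partial$, which kills $\ker L$, is an isometry of $((\ker L)^\bot_\mathcal{F},\mathcal{E})$ onto the closed space $\Im\partial$, and $\lin\{\varphi_j\}$ is $\mathcal{E}$-dense in $(\ker L)^\bot_\mathcal{F}$ (by the spectral theorem for $L$, since for $f\in\dom L\cap(\ker L)^\bot$ the partial sums $\sum_{k\le N}\langle f,\varphi_k\rangle_{L_2(X,\mu)}\varphi_k$ satisfy $\mathcal{E}(f-f_N)=\sum_{k>N}\lambda_k\langle f,\varphi_k\rangle_{L_2(X,\mu)}^2\to 0$); this transports to $\mathcal{H}$-density of $\lin\{e_j\}$ in $\Im\partial$. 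Since $\{\varphi_j\}$ is an orthonormal basis of $(\ker L)^\bot$ in $L_2(X,\mu)$, the vectors $(\varphi_j,0)$ and $(0,e_j)$ form an orthonormal basis of $(\ker L)^\bot\oplus\Im\partial$. From $v_j=\tfrac1{\sqrt2}\big((\varphi_j,0)+(0,e_j)\big)$ and $w_j=\tfrac1{\sqrt2}\big((\varphi_j,0)-(0,e_j)\big)$ it follows that $\overline{\lin\{v_j,w_j\}}=(\ker L)^\bot\oplus\Im\partial$. Rearranging the orthogonal decompositions $L_2(X,\mu)=\ker L\oplus(\ker L)^\bot$ and $\mathcal{H}=\Im\partial\oplus\ker\partial^\ast$ yields
\[\mathbb{H}=\big(\ker L\oplus\ker\partial^\ast\big)\oplus\big((\ker L)^\bot\oplus\Im\partial\big)=\ker\mathbb{D}\oplus\overline{\lin\{v_j,w_j\}},\]
where $\ker\mathbb{D}=\ker L\oplus\ker\partial^\ast$ is read off from (\ref{E:Diracdef}).

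Finally I would invoke self-adjointness of $\mathbb{D}$. For $v\in\dom\mathbb{D}$ the element $\mathbb{D}v\in\mathbb{H}$ expands in the above orthonormal basis; its $\ker\mathbb{D}$-component vanishes because $\langle\mathbb{D}v,u\rangle_\mathbb{H}=\langle v,\mathbb{D}u\rangle_\mathbb{H}=0$ for every $u\in\ker\mathbb{D}$. Using $\langle\mathbb{D}v,v_j\rangle_\mathbb{H}=\langle v,\mathbb{D}v_j\rangle_\mathbb{H}=\lambda_j^{1/2}\langle v,v_j\rangle_\mathbb{H}$ and $\langle\mathbb{D}v,w_j\rangle_\mathbb{H}=\langle v,\mathbb{D}w_j\rangle_\mathbb{H}=-\lambda_j^{1/2}\langle v,w_j\rangle_\mathbb{H}$ gives precisely (\ref{E:Diracrep}), the series converging in $\mathbb{H}$ as the Fourier expansion of $\mathbb{D}v$. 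The closing remark on the kernel then follows at once, since both summands $\ker L$ and $\ker\partial^\ast$ of $\ker\mathbb{D}$ may be infinite dimensional. I expect the only genuine obstacle to be the completeness claim, namely verifying that $\{e_j\}$ spans $\Im\partial$; the eigenvector computations and the concluding expansion are routine once the spectral theorem for the self-adjoint $\mathbb{D}$ is applied.
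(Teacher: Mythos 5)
Your proposal is correct, but it takes a genuinely different route from the paper. The paper first diagonalizes $\mathbb{D}^2=\left(\begin{smallmatrix}\partial^\ast\partial&0\\0&\partial\partial^\ast\end{smallmatrix}\right)$, using the unitary $Uf=\partial((-L_\bot)^{-1/2}f)$ to show that $\omega_j=\lambda_j^{-1/2}\partial\varphi_j$ is an orthonormal basis of $\mathrm{Im}\,\partial$ diagonalizing $\Delta_1=\partial\partial^\ast$, and then proves the lemma by functional calculus: from $\int_{\mathbb{R}}x^2\,d\langle E_xv,v\rangle_{\mathbb{H}}=\langle v,\mathbb{D}^2v\rangle_{\mathbb{H}}$ it deduces that the spectral measures of $\mathbb{D}$ are supported on the discrete set $\{\pm\lambda_j^{1/2}\}_j\cup\{0\}$, and only afterwards identifies $v_j,w_j$ as the eigenvectors. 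You bypass $\mathbb{D}^2$ entirely and diagonalize $\mathbb{D}$ by hand: eigenvector check, orthonormality, completeness of $\{v_j,w_j\}$ modulo $\ker\mathbb{D}=\ker L\oplus\ker\partial^\ast$, and the Fourier expansion of $\mathbb{D}v$ via symmetry. The two arguments share all the essential ingredients --- your orthonormal system $\{e_j\}$ of $\mathrm{Im}\,\partial$ is exactly the paper's $\{\omega_j\}$, and the decomposition $\mathbb{H}=(\ker L\oplus\ker\partial^\ast)\oplus\bigl((\ker L)^\bot\oplus\mathrm{Im}\,\partial\bigr)$ is the same --- but yours is more elementary and self-contained, at the price of having to argue completeness of the eigensystem explicitly; the paper's detour through $\mathbb{D}^2$ yields the spectral representation of $\mathbb{D}^2$ (stated there as a separate lemma) along the way and localizes the spectrum of $\mathbb{D}$ without a separate completeness argument for the $v_j,w_j$. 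The one point you should tighten is the density of $\operatorname{span}\{\varphi_j\}$ in $((\ker L)^\bot_{\mathcal{F}},\mathcal{E})$: your tail estimate only covers $f\in\operatorname{dom}L\cap(\ker L)^\bot$, so you should add that $\operatorname{dom}L$ is an $\mathcal{E}_1$-core for $\mathcal{F}$ and that the spectral projection onto $(\ker L)^\bot$ preserves $\operatorname{dom}L$ and does not increase $\mathcal{E}_1$, whence such $f$ are $\mathcal{E}$-dense in $(\ker L)^\bot_{\mathcal{F}}$; this is the same level of detail the paper itself elides when asserting that the $\omega_j$ form an orthonormal basis of $\mathrm{Im}\,\partial$.
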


To prove the lemma we investigate the square $\mathbb{D}^2$ of $\mathbb{D}$. Set 
\[dom\:\Delta_{1}:=\left\lbrace \omega\in dom\:\partial^\ast: \partial^\ast\omega\in\mathcal{F}\right\rbrace\] 
and 
\[\Delta_{1}\omega:=\partial\partial^\ast \omega, \ \ \omega \in dom\:\Delta_{1}.\] 
The restriction $-L_\bot$ of $-L$ to $(ker\:L)^\bot_{L_2(X,\mu)}$ has a nonnegative and bounded inverse $(-L_\bot)^{-1}$, and by $(-L_\bot)^{-1/2}$ we denote its square root. Set
\[Uf:=\partial ((-L_\bot)^{-1/2}f),\ \ f\in L_2(X,\mu).\]
Then $U$ is a unitary transformation from $(ker\:L)^\bot_{L_2(X,\mu)}$ onto a subspace of $\mathcal{H}$, and it is not difficult to see that
\[U((ker\:L)^\bot_{dom\:L})= Im\:\partial \cap dom\:\Delta_1=\left\lbrace \partial g: Lg\in\mathcal{F}\right\rbrace.\] 
For $f\in (ker\:L)^\bot_{dom\:L}$ we have $\Delta_1 Uf=\partial ((-L)^{-1/2}f)=U L f$. Moreover, the $1$-forms
\[\omega_j:=U\varphi_i=\lambda_j^{-1/2}\partial \varphi_j, \ \ j=1,2,...\]
yield an orthonormal basis of $Im\:\partial$, and $\Delta_1 \omega_j=\lambda_j\omega_j$. By choosing a suitable orthonormal basis of $ker\:\partial^\ast$ (note that $\mathcal{H}$ is separable), we can obtain an orthonormal basis of $\mathcal{H}$ such that for any $\omega\in dom\:\Delta_1$ we have
\[\Delta_1\omega=\sum_{j=1}^\infty \lambda_j \left\langle \omega, \omega_j\right\rangle_{\mathcal{H}}\omega_j.\]
We also observe that
\[dom\:\Delta_1=\left\lbrace \omega\in\mathcal{H}: \sum_{j=1}^\infty \lambda_j |\left\langle \omega, \omega_i\right\rangle_\mathcal{H}|^2<+\infty \right\rbrace.\]
Therefore the operator $(\Delta_1, dom\:\Delta_1)$ is self-adjoint on $\mathcal{H}$ with eigenvalues $\lambda_1, \lambda_2, ...$, and possibly also zero is an eigenvalue.

\begin{remark}
It follows from the results in \cite{CS09, IRT} that for resistance forms on finitely ramified fractals (such as for instance the Sierpinski gasket) the space $ker\:\partial^\ast$ is infinite dimensional (even if zero is not an eigenvalue of $L$) and therefore zero is an eigenvalue of $\Delta_1$ of infinite multiplicity. A precise statement for the Sierpinski gasket is \cite[Theorem 3.9]{CGIS13}. See also \cite{HTa} for more general metric spaces.
\end{remark}

The square $\mathbb{D}^2$ of $\mathbb{D}$ is given by
\[\mathbb{D}^2=\left(\begin{array}{rr} 
\partial^\ast\partial & 0\\
0 &\partial\partial^\ast\end{array}\right),\]
and it is straightforward to see that its domain $dom\:\mathbb{D}^2:=\left\lbrace v\in\mathbb{H}: \mathbb{D}v\in dom\:\mathbb{D}\right\rbrace$ coincides with $dom\:L\oplus dom\:\Delta_1$.
By the spectral theorem also $(\mathbb{D}^2, dom\:\mathbb{D}^2)$ is self-adjoint on $\mathbb{H}$. Put
\[v_{j,0}:=(\varphi_j, 0) \text{ and }\ v_{j,1}:=(0,\omega_j),\ \ i=1,2,...\]

\begin{lemma}
Assume that $L$ has pure point spectrum with spectral representation (\ref{E:specL}). Then the operator $(\mathbb{D}^2, dom\:\mathbb{D}^2)$ admits the spectral representation 
\[\mathbb{D}^2 v= \sum_{i=0,1}\sum_{j=1}^\infty\lambda_j \left\langle v,v_{j,i}\right\rangle_{\mathbb{H}} v_{j,i}, \ \ v\in dom\:\mathbb{D}^2.\]
In general, zero may be an eigenvalue of $\mathbb{D}^2$ of infinite multiplicity.
\end{lemma}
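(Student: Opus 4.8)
The plan is to obtain the representation directly from the block-diagonal form of $\mathbb{D}^2$ together with the spectral expansions of its two diagonal entries, both of which are already available above. Recall
\[\mathbb{D}^2=\left(\begin{array}{rr} \partial^\ast\partial & 0\\ 0 &\partial\partial^\ast\end{array}\right),\qquad dom\:\mathbb{D}^2=dom\:L\oplus dom\:\Delta_1,\]
that $\partial^\ast\partial=-L$ on $dom\:L$ with $-Lf=\sum_j\lambda_j\left\langle f,\varphi_j\right\rangle_{L_2(X,\mu)}\varphi_j$, and that $\partial\partial^\ast=\Delta_1$ on $dom\:\Delta_1$ with $\Delta_1\omega=\sum_j\lambda_j\left\langle \omega,\omega_j\right\rangle_\mathcal{H}\omega_j$, the latter having been derived from the unitary $U$ and the identity $\Delta_1\omega_j=\lambda_j\omega_j$.

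First I would check that $\left\lbrace v_{j,0}\right\rbrace_j\cup\left\lbrace v_{j,1}\right\rbrace_j$ is orthonormal in $\mathbb{H}$. Since the summands $L_2(X,\mu)$ and $\mathcal{H}$ are mutually orthogonal in $\mathbb{H}$, we have $\left\langle v_{j,0},v_{k,1}\right\rangle_\mathbb{H}=0$, while $\left\langle v_{j,0},v_{k,0}\right\rangle_\mathbb{H}=\left\langle \varphi_j,\varphi_k\right\rangle_{L_2(X,\mu)}=\delta_{jk}$ and $\left\langle v_{j,1},v_{k,1}\right\rangle_\mathbb{H}=\left\langle \omega_j,\omega_k\right\rangle_\mathcal{H}=\delta_{jk}$, using that $\left\lbrace \varphi_j\right\rbrace_j$ is orthonormal in $L_2(X,\mu)$ and $\left\lbrace \omega_j\right\rbrace_j$ is orthonormal in $Im\:\partial$. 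Each $v_{j,i}$ is an eigenvector of $\mathbb{D}^2$ for the eigenvalue $\lambda_j$, because $\partial^\ast\partial\varphi_j=\lambda_j\varphi_j$ and $\Delta_1\omega_j=\lambda_j\omega_j$. Adjoining an orthonormal basis of $ker\:\mathbb{D}^2$ then completes this to an orthonormal basis of $\mathbb{H}$ that diagonalizes $\mathbb{D}^2$.

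The remainder is a matching of components. For $v=(f,\omega)\in dom\:\mathbb{D}^2$ one reads off $\left\langle v,v_{j,0}\right\rangle_\mathbb{H}=\left\langle f,\varphi_j\right\rangle_{L_2(X,\mu)}$ and $\left\langle v,v_{j,1}\right\rangle_\mathbb{H}=\left\langle \omega,\omega_j\right\rangle_\mathcal{H}$, whence
\[\sum_{j=1}^\infty\lambda_j\left\langle v,v_{j,0}\right\rangle_\mathbb{H}\,v_{j,0}=(\partial^\ast\partial f,\,0)\quad\text{and}\quad\sum_{j=1}^\infty\lambda_j\left\langle v,v_{j,1}\right\rangle_\mathbb{H}\,v_{j,1}=(0,\,\partial\partial^\ast\omega).\]
Summing over $i=0,1$ gives $(\partial^\ast\partial f,\partial\partial^\ast\omega)=\mathbb{D}^2v$, which is the asserted identity; convergence of both series in $\mathbb{H}$ is exactly the membership $v\in dom\:L\oplus dom\:\Delta_1=dom\:\mathbb{D}^2$. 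For the final assertion I would note that $\Delta_1\omega=0$ forces $\left\|\partial^\ast\omega\right\|_{L_2(X,\mu)}^2=\left\langle \Delta_1\omega,\omega\right\rangle_\mathcal{H}=0$, so that $ker\:\mathbb{D}^2=ker\:L\oplus ker\:\partial^\ast$, and then invoke the Remark above: for resistance forms on finitely ramified fractals $ker\:\partial^\ast$ is infinite dimensional, hence zero is then an eigenvalue of $\mathbb{D}^2$ of infinite multiplicity.

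With the block structure, the expansions of $-L$ and $\Delta_1$, and the description of $dom\:\mathbb{D}^2$ all in hand, there is no real obstacle here; the only point requiring a moment's care is that the eigenvectors coming from the two blocks, together with a basis of $ker\:\mathbb{D}^2$, form a \emph{complete} orthonormal system in $\mathbb{H}=L_2(X,\mu)\oplus\mathcal{H}$. This is immediate once one recalls that $\left\lbrace \varphi_j\right\rbrace_j$ (resp. $\left\lbrace \omega_j\right\rbrace_j$), completed by a basis of $ker\:L$ (resp. $ker\:\partial^\ast$), is already known to be complete in $L_2(X,\mu)$ (resp. in $\mathcal{H}$), combined with the orthogonality of the two direct summands.
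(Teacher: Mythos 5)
Your proof is correct and follows essentially the same route as the paper, which proves this lemma implicitly through the preceding discussion: the block-diagonal form of $\mathbb{D}^2$, the identification $\partial^\ast\partial=-L$ on $dom\:L$, the spectral representation of $\Delta_1=\partial\partial^\ast$ via the orthonormal system $\omega_j=\lambda_j^{-1/2}\partial\varphi_j$ of $Im\:\partial$, and the completion by a basis of $ker\:\partial^\ast$. Your additional verifications (orthonormality of the $v_{j,i}$, the component matching, and the identification $ker\:\mathbb{D}^2=ker\:L\oplus ker\:\partial^\ast$ with the infinite-dimensionality of the latter in the finitely ramified case) are all consistent with what the paper sets up.
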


Now Lemma \ref{L:Diracrep} is proved quickly.
\begin{proof} 
If
\[\mathbb{D}=\int_\mathbb{R} xdE_x\]
is the spectral representation of $\mathbb{D}$ then we have
\[\int_\mathbb{R} x^2\:d\left\langle E_xv,v\right\rangle_\mathbb{H}=\left\langle v,\mathbb{D}^2v\right\rangle_{\mathbb{H}}=\sum_{j=1}^\infty \lambda_j|\left\langle v,v_j\right\rangle_\mathbb{H}|^2\]
for any $v\in dom\:\mathbb{D}^2$. By functional calculus it follows that the measures $d\left\langle E_xv,v\right\rangle_\mathbb{H}$ are supported on the discrete set 
\[\left\lbrace -\lambda_j^{1/2}\right\rbrace_{j=1}^\infty\cup\left\lbrace 0\right\rbrace\cup \left\lbrace \lambda_j^{1/2}\right\rbrace_{j=1}^\infty,\] 
and a direct calculation shows that  $v_j$ and $w_j$ are the eigenvectors corresponding to $\lambda_j^{1/2}$ and $-\lambda_j^{1/2}$, respectively.
\end{proof}

\begin{remark}\label{R:measurable}
It is not difficult to prove versions of these results in the measurable setup. Let $(X,\mathcal{X},\mu)$ be a $\sigma$-finite measure space and $(\mathcal{E},\mathcal{F})$ a Dirichlet form on $L_2(X,\mu)$. Then the collection $\mathcal{B}:=\mathcal{F}\cap L_\infty(X,\mu)$ of (equivalence classes of) bounded energy finite functions on $X$ provides a (normed) algebra. If $(\mathcal{E},\mathcal{F})$ admits a carr\'e du champ, \cite[Chapter I]{BH91}, then we can use $\mathcal{B}$ and $\mathcal{B}^\ast$ in place of $\mathcal{C}$ and $\mathcal{C}^\ast$, respectively, to introduce the spaces $\mathcal{H}$ and $\mathbb{H}$ and the operators $\partial$, $\partial^\ast$ and $\mathbb{D}$ in a similar manner as before.
\end{remark}

\begin{examples}\label{Ex:Dirac}\mbox{}
We collect some examples that satisfy the hypotheses of this section.
\begin{enumerate}
\item[(i)] Consider the Sierpinski gasket $X=SG$, equipped with the resistance metric and the natural self-similar normalized Hausdorff measure $\mu$ and let $(\mathcal{E},\mathcal{F})$ be the local regular Dirichlet form on $L_2(SG,\mu)$, determined by the standard energy form on $SG$, cf. \cite[Theorems 3.4.6 and 3.4.7]{Kig01}. It is known that the generator of $(\mathcal{E},\mathcal{F})$ has discrete spectrum. This can also be observed for more general resistance forms on p.c.f. self-similar sets. See \cite[Theorems 2.4.1, 2.4.2, 3.4.6 and 3.4.7]{Kig01}.
\item[(ii)] Generators of local regular Dirichlet forms on generalized Sierpinski carpets, considered with the natural normalized Hausdorff measure, have pure point spectrum, see \cite{BB89, BBKT} and in particular \cite[Proposition 6.15]{BB99}.
\item[(iii)] Let $X=\mathbb{R}^n$ and let $(\mathcal{E},\mathcal{F})$ be the quadratic form associated with a Schr\"odinger operator $H=-\Delta+V$. Under some conditions on the potential $V$ (for instance continuity and nonnegativity) the associated form will be a Dirichlet form, and under further conditions on $V$ (for instance unboundedness at infinity) the operator $H$ will have discrete spectrum. See for example \cite[XIII.6.7-XIII.6.9]{RSIV}. This also applies to relativistic Schr\"odinger operators and, more generally, to Schr\"odinger operators associated with L\'evy processes, \cite{CMS90}.
\end{enumerate}
\end{examples}

\section[Spectral triples]{Spectral triples}\label{S:triples}

In this section we consider spectral triples associated with the Dirac operators $\mathbb{D}$ defined by formula (\ref{E:Diracdef}) in the preceding section.

Let $X$ be a locally compact separable metric space and $\mu$ a nonnegative Radon measure on $X$ with $\mu(U)>0$ for any nonempty open $U\subset X$. Let $(\mathcal{E},\mathcal{F})$ be a regular symmetric Dirichlet form on $L_2(X,\mu)$. As before we assume that $(\mathcal{E},\mathcal{F})$ has no killing part. As in the previous section we consider the natural complexifications of $\mathcal{E}$, $\mathcal{F}$, $\Gamma$ etc.

Since the kernel $ker\:\mathbb{D}$ may be infinite dimensional, we discuss a generalized notion of spectral triple similar to the one proposed in \cite[Definition 2.1]{CGIS12}.

\begin{definition}\label{D:triple}
A (possibly kernel degenerate) \emph{spectral triple} for an involutive algebra $A$ is a triple $(A,H,D)$ where $H$ is a Hilbert space and $(D, dom\:D)$ a self-adjoint operator on $H$  such that
\begin{enumerate}
\item[(i)] there is a faithful $\ast$-representation $\pi:A\to L(H)$,
\item[(ii)] there is a dense $\ast$-subalgebra $A_0$ of $A$ such that for all $a\in A_0$ the commutator $[D,\pi(a)]$ is well defined as a bounded linear operator on $H$,
\item[(iii)] the operator $(1+D)^{-1}$ is compact on $(ker\:D)^\bot$.
\end{enumerate} 
\end{definition}

If the reference measure $\mu$ is an energy dominant measure for $(\mathcal{E},\mathcal{F})$, i.e. if $(\mathcal{E},\mathcal{F})$ admits a carr\'e du champ, \cite{BH91}, then the space 
\begin{equation}\label{E:AA0}
\mathbb{A}_0:=\left\lbrace f\in\mathcal{C}:\Gamma(f)\in L_\infty(X,\mu)\right\rbrace
\end{equation}
is well defined and, according to Lemma \ref{L:coords},  $\mathcal{E}_1$-dense in $\mathcal{F}$. The Markov property of $(\mathcal{E},\mathcal{F})$ implies that $\mathbb{A}_0$ is an involutive algebra of functions, see Corollary \ref{C:multGamma} in the Appendix. Let $\mathbb{A}$ be the $C^\ast$-subalgebra of $C_0(X)$ obtained as the closure of $\mathbb{A}_0$,
\begin{equation}\label{E:fatA}
\mathbb{A}:=\clos_{C_0(X)}(\mathcal{A}_0). 
\end{equation}

\begin{remark}\mbox{}
\begin{enumerate}
\item[(i)] By definition any coordinate sequence for $(\mathcal{E},\mathcal{F})$ with respect to $\mu$ is contained in the algebra $\mathbb{A}_0$. The Stone-Weierstrass theorem implies that if there exists a point separating coordinate sequence $(f_n)_n$ that vanishes nowhere (i.e. such that for any $x\in X$ there exists some $f_n$ with $f_n(x)\neq 0$) then $\mathbb{A}$ agrees with the space $C_0(X)$.
\item[(ii)] If $m$ is a given nonnegative Radon measure on $X$ with $m(U)>0$ for any nonempty open $U\subset X$ and such that $\left\lbrace f\in\mathcal{C}: \Gamma(f)\in L_\infty(X,m)\right\rbrace$ is $\mathcal{E}$-dense in $\mathcal{F}$, then $m$ is energy dominant for $(\mathcal{E},\mathcal{F})$. This follows from (\ref{E:approxGamma}).
\end{enumerate}
\end{remark}

If in addition the generator $L$ of $(\mathcal{E},\mathcal{F})$ has discrete spectrum, i.e. if there exists a monotonically increasing sequence $0\leq \lambda_1\leq \lambda_2\leq ...$ of isolated eigenvalues $\lambda_j$ of $-L$ with finite multiplicity and $\lim_{j\to\infty}\lambda_j=+\infty$, together with an orthonormal basis $\left\lbrace \varphi_j\right\rbrace_j$ of corresponding eigenfunctions in $L_2(X,\mu)$, then the Dirac operator $\mathbb{D}$ on the Hilbert space $\mathbb{H}$ gives rise to a spectral triple for $\mathbb{A}$. 

\begin{theorem}\label{T:triple}
Let $(\mathcal{E},\mathcal{F})$ be a regular symmetric Dirichlet form on $L_2(X,\mu)$ and assume $\mu$ is energy dominant for $(\mathcal{E},\mathcal{F})$. Then we have the following.
\begin{itemize}
\item[(i)] There is a faithful representation $\pi:\mathbb{A}\to L(\mathbb{H})$,
\item[(ii)] For any $a\in\mathbb{A}_0$ the commutator $[\mathbb{D}, \pi(a)]$ is a bounded linear operator on $\mathbb{H}$,
\item[(iii)] If the $L_2(X,\mu)$-generator $L$ of $(\mathcal{E},\mathcal{F})$ has discrete spectrum then $(1+\mathbb{D})^{-1}$ is compact on $(ker\:\mathbb{D})^\bot$, and $(\mathbb{A},\mathbb{H},\mathbb{D})$ is a spectral triple for $\mathbb{A}$.
\end{itemize}
\end{theorem}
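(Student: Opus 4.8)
The plan is to establish the three claims of Theorem \ref{T:triple} in order, drawing on the orthogonal decompositions and spectral representations developed for $\mathbb{D}$ in Section \ref{S:Dirac}. For (i), the natural candidate representation is the diagonal action of $\mathbb{A}$ on $\mathbb{H}=L_2(X,\mu)\oplus\mathcal{H}$: a function $a\in\mathbb{A}$ acts by multiplication on the first summand and by the right action of Borel functions on $\mathcal{H}$ (recalling the earlier remark that the right action extends to $\mathcal{B}_b(X)$). I would set $\pi(a)(f,\omega):=(af, \omega a)$ and check that this is a $\ast$-homomorphism into $L(\mathbb{H})$ that is faithful. Faithfulness follows because the first component already reproduces multiplication on $L_2(X,\mu)$: if $\pi(a)=0$ then $af=0$ for all $f\in L_2(X,\mu)$, forcing $a=0$ $\mu$-a.e., hence $a=0$ in $C_0(X)$ since $\mu(U)>0$ on nonempty open sets. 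The $\ast$-property uses that the involution on $\mathbb{A}$ is complex conjugation and that both actions are compatible with it.

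For (ii), I would compute the commutator $[\mathbb{D},\pi(a)]$ for $a\in\mathbb{A}_0$ using the matrix form of $\mathbb{D}$. On a suitable core, $[\mathbb{D},\pi(a)](f,\omega)$ unwinds into expressions involving $\partial(af)-(\partial a)f-a\,\partial f$ and adjoint terms; the Leibniz rule (\ref{E:Leibniz}) for $\partial$ is exactly what makes the dangerous unbounded pieces cancel, leaving an operator built from the bounded action of $\partial a$. Concretely, the commutator should reduce to an operator whose norm is controlled by $\left\|\partial a\right\|$-type quantities, and the crucial input is that $a\in\mathbb{A}_0$ means $\Gamma(a)=d\Gamma(a)/d\mu\in L_\infty(X,\mu)$, i.e. $\left\|\Gamma(a)\right\|_{L_\infty(X,\mu)}<\infty$. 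This essential boundedness of the energy density is precisely what bounds the Clifford-type multiplication by $\partial a$ on $\mathcal{H}$, yielding boundedness of $[\mathbb{D},\pi(a)]$. I would verify the identity on the dense domain and then invoke density to conclude the commutator extends boundedly to all of $\mathbb{H}$.

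For (iii), the task is to show $(1+\mathbb{D})^{-1}$ is compact on $(\ker\mathbb{D})^\bot$ under the discreteness hypothesis on $L$. Here I would use the spectral representation of $\mathbb{D}$ from Lemma \ref{L:Diracrep}: on $(\ker\mathbb{D})^\bot$, the operator $\mathbb{D}$ has eigenvalues $\pm\lambda_j^{1/2}$ with eigenvectors $v_j,w_j$, and discreteness of $L$ means $\lambda_j\to+\infty$ with each eigenvalue of finite multiplicity. Consequently $(1+\mathbb{D})^{-1}$ restricted to $(\ker\mathbb{D})^\bot$ is diagonalized by $\{v_j,w_j\}$ with eigenvalues $(1\pm\lambda_j^{1/2})^{-1}\to 0$, so it is a norm limit of finite-rank operators and hence compact. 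Combining (i)--(iii) with the fact that $\mathbb{A}_0$ is a dense $\ast$-subalgebra of $\mathbb{A}$ (established above via Lemma \ref{L:coords} and Corollary \ref{C:multGamma}) verifies all clauses of Definition \ref{D:triple}, giving the spectral triple.

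I expect the main obstacle to be step (ii): making the formal cancellation of unbounded terms in $[\mathbb{D},\pi(a)]$ rigorous. One must be careful about domains, since $\partial$ and $\partial^\ast$ are unbounded and the right action on $\mathcal{H}$ is only defined by continuous extension; the cleanest route is to work first on simple tensors and on $\mathcal{C}$ where the Leibniz rule is literally an identity, obtain the explicit bounded form of the commutator there, and only afterwards pass to limits using the uniform bound from $\Gamma(a)\in L_\infty(X,\mu)$. A subtlety worth checking is that the adjoint block $\partial^\ast$ contributes a term compatible with the same bound, which should follow by duality from the bound on the $\partial$ block together with the essential boundedness of the energy density of $a$.
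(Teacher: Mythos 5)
Your overall strategy coincides with the paper's: the diagonal multiplication representation for (i), the Leibniz rule for $\partial$ together with a product rule for $\partial^\ast$ (the paper's Lemma \ref{L:productrulediv}) and the bound $\Gamma(a)\in L_\infty(X,\mu)$ for (ii), and the spectral representation of Lemma \ref{L:Diracrep} with $\lambda_j\to\infty$ and finite multiplicities for (iii). Parts (i) and (iii) are fine as you describe them, and your appeal to the direct-integral/fibrewise picture for the norm bound in (ii) is exactly how the paper controls $\left\|[\mathbb{D},a](f,\omega)\right\|_{\mathbb{H}}$ by $\left\|\Gamma(a)\right\|_{L_\infty(X,\mu)}^{1/2}\left\|(f,\omega)\right\|_{\mathbb{H}}$.

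There is, however, one concrete gap: you let $a$ act on the form component by the \emph{right} action, $\pi(a)(f,\omega)=(af,\omega a)$, whereas the cancellation you invoke in (ii) only happens for the \emph{left} action, which is what the paper uses ($\pi(a)(f,\omega)=(af,a\omega)$). The Leibniz rule reads $\partial(af)=a\,\partial f+(\partial a)f$ with $a\,\partial f$ the left action, so with your choice the second component of the commutator is $\partial(af)-(\partial f)a=\bigl(a\,\partial f-(\partial f)a\bigr)+(\partial a)f$, and the term $a\,\partial f-(\partial f)a$ does \emph{not} cancel unless the form is strongly local. Theorem \ref{T:triple} is stated for general regular forms without killing part, so jump parts are allowed and this term is genuinely nonzero; it is represented on the jump part by $(x,y)\mapsto(a(x)-a(y))(f(x)-f(y))$, whose $\mathcal{H}$-norm is controlled by $\left\|\Gamma(a)\right\|_{L_\infty(X,\mu)}^{1/2}\left\|f\right\|_{L_2(X,\mu)}$ only after an additional estimate using the symmetry of the jump measure — an estimate your argument never supplies, since you attribute the boundedness entirely to the Leibniz cancellation. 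The same mismatch appears in the first component, where the product rule for $\partial^\ast$ is formulated for the left action. The repair is either to switch to the left action (after which your computation reduces, as in the paper, to $[\mathbb{D},a](f,\omega)=(-\omega^\ast\partial a, f\,\partial a)$ and the fibrewise bound applies directly), or to keep the right action and add the jump-kernel estimate for $a\,\partial f-(\partial f)a$. In the strongly local case the two actions coincide and your argument is complete as written.
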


The proof of (ii) uses the fact that given an energy dominant measure $m$, the Hilbert space $\mathcal{H}$ can be written as the direct integral with respect to $m$ of a measurable field of Hilbert spaces $\left\lbrace \mathcal{H}_x\right\rbrace_{x\in X}$, cf. \cite{Dix, Tak}.

\begin{theorem}\label{T:measurablefield}
Let $(\mathcal{E},\mathcal{F})$ be a regular symmetric Dirichlet form on $L_2(X,\mu)$ and let $m$ be an energy dominant measure for $(\mathcal{E},\mathcal{F})$. Then there are a measurable field of Hilbert $\mathcal{C}$-modules on which the action of $a\in\mathcal{C}$ on $\omega_x\in\mathcal{H}_x$ is given by $a(x)\omega_x\in\mathcal{H}_x$ and such that the direct integral $\int_X^\oplus \mathcal{H}_x\:m(dx)$ is isometrically isomorphic to $\mathcal{H}$. In particular,
\[\left\langle \omega,\eta\right\rangle_{\mathcal{H}}=\int_X\left\langle \omega_x,\eta_x\right\rangle_{\mathcal{H}_x}\:m(dx)\]
for all $\omega,\eta\in\mathcal{H}$, where we write $\left\langle\cdot,\cdot\right\rangle_{\mathcal{H}_x}$ for the scalar products in the spaces $\mathcal{H}_x$, respectively. Given $f,g\in\mathcal{F}$, we have $\Gamma(f,g)(x)=\left\langle \partial_xf,\partial_xg\right\rangle_{\mathcal{H}_x}$ for $m$-a.e. $x\in X$, where $\partial_xf:=(\partial f)_x$.
\end{theorem}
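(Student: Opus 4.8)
The plan is to realize $\mathcal{H}$ as a direct integral by a fibrewise Radon--Nikodym construction, following the general theory of measurable fields of Hilbert spaces \cite{Dix, Tak} and the idea already used in \cite{Hino08, HRT}. First I would exploit separability: since $(\mathcal{F},\mathcal{E}_1)$ is separable and $(\mathcal{E},\mathcal{F})$ is regular, the Dirichlet algebra $\mathcal{C}$ is separable in the norm $\|\cdot\|_{\mathcal{C}}=\mathcal{E}_1(\cdot)^{1/2}+\sup_X|\cdot|$, so I can fix a countable family $\{a_i\}_i\subset\mathcal{C}$ dense in this norm; then the simple tensors $\{a_i\otimes a_j\}_{i,j}$ span a dense subspace of $\mathcal{H}$. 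Because $m$ is energy dominant, each mutual energy measure $\Gamma(a_i,a_j)$ is absolutely continuous with respect to $m$, and I would fix Radon--Nikodym densities $g_{ij}:=d\Gamma(a_i,a_j)/dm$. The key preliminary step is to discard a single $m$-null set $N$ outside of which the countable family of Gram matrices $\big(g_{ij}(x)\big)$ is simultaneously Hermitian and nonnegative definite: symmetry holds $m$-a.e.\ since $\Gamma(a_i,a_j)=\Gamma(a_j,a_i)$, and nonnegativity of $\sum_{i,j}c_i\overline{c_j}\,g_{ij}(x)$ for every finite rational coefficient vector follows from $\sum_{i,j}c_i\overline{c_j}\,\Gamma(a_i,a_j)=\Gamma\big(\sum_i c_i a_i\big)\geq 0$ as a measure; countability of the conditions gives a common $N$, and continuity in $c$ extends nonnegativity to all coefficients off $N$.

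For $x\notin N$ I would then define $\mathcal{H}_x$ as the Hilbert space obtained from the formal span $\operatorname{span}_{\mathbb C}\{e_i\}_i$ equipped with the nonnegative Hermitian form $\langle e_i,e_j\rangle_{\mathcal{H}_x}:=g_{ij}(x)$, by factoring out null vectors and completing, and set $\partial_x a_i:=[e_i]$; for $x\in N$ put $\mathcal{H}_x:=\{0\}$. The vectors $\{\partial_x a_i\}_i$ are total in $\mathcal{H}_x$ by construction, and since each map $x\mapsto\langle\partial_x a_i,\partial_x a_j\rangle_{\mathcal{H}_x}=g_{ij}(x)$ is measurable, declaring the fields $x\mapsto\partial_x a_i$ and their $\mathcal{C}$-combinations fundamental yields a measurable field of Hilbert spaces in the sense of \cite{Dix}. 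I would then define $\Phi$ on the dense subspace $\operatorname{span}\{a_i\otimes a_j\}$ by $\Phi(a_i\otimes a_j)(x):=a_j(x)\,\partial_x a_i$; this is a measurable field because $a_j$ is continuous, and by the very definition of the inner products one checks $\int_X\|\Phi(\omega)(x)\|_{\mathcal{H}_x}^2\,m(dx)=\|\omega\|_{\mathcal{H}}^2$, using $\langle a_i\otimes a_j,a_k\otimes a_l\rangle_{\mathcal{H}}=\int_X a_j\overline{a_l}\,d\Gamma(a_i,a_k)$. Hence $\Phi$ extends to an isometry $\mathcal{H}\to\int_X^\oplus\mathcal{H}_x\,m(dx)$.

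To see that $\Phi$ is onto I would use that its range is closed and contains a dense set: extending the right action to bounded Borel functions $\mathcal{B}_b(X)$, as in the remark following (\ref{E:actionsofC}), the image contains all simple fields $x\mapsto c(x)\,\partial_x a_i$ with $c\in\mathcal{B}_b(X)$, and since $\{\partial_x a_i\}_i$ is total in every fiber these simple fields are dense in the direct integral. The $\mathcal{C}$-module structure is read off fibrewise: the right action of $a\in\mathcal{C}$ corresponds to multiplication by $a(x)$, and the Leibniz rule (\ref{E:Leibniz}) shows the left action of (\ref{E:actionsofC}) reduces to the same scalar multiplication, so each $\mathcal{H}_x$ becomes a $\mathcal{C}$-module with $a\cdot\omega_x=a(x)\omega_x$. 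Finally, for $f,g\in\mathcal{C}$ the identity $\langle\partial_x f,\partial_x g\rangle_{\mathcal{H}_x}=\Gamma(f,g)(x)$ holds $m$-a.e.\ because, testing against indicators $\mathbf 1_A$ through the bounded Borel right action, both sides integrate to $\int_A d\Gamma(f,g)$ over every Borel $A$; the extension to all $f,g\in\mathcal{F}$ follows by $\mathcal{E}_1$-approximation together with (\ref{E:approxGamma}) and passage to an $m$-a.e.\ convergent subsequence.

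The main obstacle I anticipate is the bookkeeping of null sets combined with the verification of the measurable-field axioms: one must build the whole family $\{\mathcal{H}_x\}$ consistently off a single exceptional set so that symmetry, positivity, and measurability of all Gram entries hold simultaneously, and one must confirm both of Dixmier's conditions---that the fundamental sequence generates a genuine measurable field and that $\Phi$ is not merely isometric into but actually exhausts the direct integral. Separability of $\mathcal{H}$, inherited from that of $\mathcal{C}$, is what makes the direct-integral machinery applicable throughout.
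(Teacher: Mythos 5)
Your construction is essentially the proof the paper points to: the paper disposes of Theorem \ref{T:measurablefield} in one line, by ``fixing $m$-versions of the functions $\Gamma(f,g)$, possible thanks to the separability of $(\mathcal{F},\mathcal{E}_1)$'', and defers to \cite[Section 2]{HRT}, where precisely your Gram-matrix/Radon--Nikodym construction is carried out. The countable dense family, the simultaneous Hermitian nonnegativity of $\big(g_{ij}(x)\big)$ off a single null set, the fibrewise completion, the isometry on simple tensors, surjectivity via $\mathcal{B}_b(X)$-multiples of the fundamental fields, and the a.e.\ identification of $\Gamma(f,g)$ with the fibre inner products are all as in that reference, so the argument is sound.

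One side claim should be struck or restricted: you assert that the Leibniz rule shows the left action of (\ref{E:actionsofC}) also reduces to multiplication by $a(x)$ in each fibre. That holds only for strongly local forms, where the energy measures satisfy $d\Gamma(ab,c)=a\,d\Gamma(b,c)+b\,d\Gamma(a,c)$. Sections \ref{S:Dirac} and \ref{S:triples} explicitly allow a nonzero jump part, and there the left and right actions are genuinely different (compare the paper's remark computing $(c\,da)(x,y)=c(x)(a(x)-a(y))$ against $((da)c)(x,y)=(a(x)-a(y))c(y)$): expanding $\left\|c(a\otimes b)\right\|_\mathcal{H}^2$ produces cross terms of the form $\int ab^2\,d\Gamma(ca,c)$ that do not collapse to $\int b^2c^2\,d\Gamma(a)$ without locality, so the fibrewise identity $\partial_x(ab)=a(x)\partial_x b+b(x)\partial_x a$ fails in general. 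Fortunately the theorem only asserts that one module action is fibrewise scalar multiplication, and your identification of the right action already delivers exactly that; the erroneous remark is not used anywhere else in your argument.
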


Theorem \ref{T:measurablefield} follows by fixing $m$-versions of the functions $\Gamma(f,g)$, possible thanks to the separability of the Hilbert space $(\mathcal{F},\mathcal{E}_1)$. See \cite[Section 2]{HRT} for a proof. 

To prove Theorem \ref{T:triple} (ii) we also make use of a product rule for the operator $\partial^\ast$. Given $a\in\mathcal{C}$ and $u\in\mathcal{C}^\ast$, define their product $au\in\mathcal{C}^\ast$ by
\[(au)(\varphi):=u(a\varphi), \ \ \varphi\in\mathcal{C}.\]

For $\omega\in\mathcal{H}$ define a mapping from $\mathcal{H}$ into $\mathcal{C}^\ast$ (actually $C_0(X)^\ast$ would suffice) by 
\[(\omega^\ast \eta)(\varphi):=\left\langle \varphi\omega,\overline{\eta}\right\rangle_\mathcal{H}, \eta\in\mathcal{H}, \ \ \varphi\in\mathcal{C}.\]
If $m$ is an energy dominant measure,  $\left\lbrace \mathcal{H}_x\right\rbrace_{x\in X}$ is the corresponding measurable field of Hilbert spaces as in Theorem \ref{T:measurablefield}, and the function $x\mapsto \left\langle \omega_x,\overline{\eta}_x\right\rangle_{\mathcal{H}_x}$ is in $L_2(X,m)$, then $\omega^\ast \eta \in\mathcal{C}^\ast$ agrees with it and therefore may itself be seen as a function in $L_2(X,m)$. 

\begin{lemma}\label{L:productrulediv}
We have
\begin{equation}\label{E:productrulediv}
\partial^\ast(a\omega)=a\partial^\ast \omega-\omega^\ast\partial a
\end{equation}
for all $a\in\mathcal{C}$ and $\omega\in\mathcal{H}$, seen as an equality in $\mathcal{C}^\ast$. If the reference measure $\mu$ itself is energy dominant, $\omega\in dom\:\partial^\ast$ and $a\in\mathbb{A}_0$, then $a\omega\in dom\:\partial^\ast$, and (\ref{E:productrulediv}) holds in $L_2(X,\mu)$.
\end{lemma}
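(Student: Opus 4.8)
The plan is to prove the identity first as an equality of functionals in $\mathcal{C}^\ast$ by testing against an arbitrary $\varphi\in\mathcal{C}$, and only afterwards upgrade it to an $L_2(X,\mu)$ identity under the additional hypotheses. For the first part, note that for fixed $a$ and fixed $\varphi$ each of the maps $\omega\mapsto(\partial^\ast(a\omega))(\varphi)$, $\omega\mapsto(a\partial^\ast\omega)(\varphi)$ and $\omega\mapsto(\omega^\ast\partial a)(\varphi)$ is a continuous linear functional of $\omega\in\mathcal{H}$: indeed $|(\partial^\ast(a\omega))(\varphi)|=|\langle a\omega,\partial\varphi\rangle_\mathcal{H}|\le\sup_X|a|\,\|\omega\|_\mathcal{H}\,\mathcal{E}(\varphi)^{1/2}$ by boundedness of the left action, and likewise $|(\omega^\ast\partial a)(\varphi)|=|\langle\varphi\omega,\overline{\partial a}\rangle_\mathcal{H}|\le\sup_X|\varphi|\,\|\omega\|_\mathcal{H}\,\mathcal{E}(a)^{1/2}$. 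Since finite linear combinations of simple tensors are dense in $\mathcal{H}$, it suffices to verify (\ref{E:productrulediv}) for $\omega=c\otimes e$ with $c,e\in\mathcal{C}$, after which the general case follows by passing to the limit.

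For such $\omega$ I would simply unfold every term against $\varphi$, using the defining form $\langle c\otimes e,p\otimes q\rangle_\mathcal{H}=\int_X eq\,d\Gamma(c,p)$ (real picture), the identity $\partial\varphi=\varphi\otimes\mathbf{1}$, the Leibniz rule $\partial(a\varphi)=a\partial\varphi+(\partial a)\varphi$, and — crucially — the Leibniz rule for mutual energy measures $d\Gamma(fg,h)=f\,d\Gamma(g,h)+g\,d\Gamma(f,h)$. Expanding the left action $a(c\otimes e)=(ac)\otimes e-a\otimes(ce)$ and applying the energy-measure Leibniz rule, the left-hand side $\langle a(c\otimes e),\partial\varphi\rangle_\mathcal{H}$ collapses (the cross term cancels) to $\int_X ae\,d\Gamma(c,\varphi)$. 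On the right, $(a\partial^\ast\omega)(\varphi)=\langle c\otimes e,\partial(a\varphi)\rangle_\mathcal{H}$ produces the same integral plus a remainder $\int_X\varphi e\,d\Gamma(c,a)$, while $(\omega^\ast\partial a)(\varphi)=\langle\varphi(c\otimes e),\overline{\partial a}\rangle_\mathcal{H}$ reproduces exactly that remainder. Hence both sides equal $\int_X ae\,d\Gamma(c,\varphi)$, which proves (\ref{E:productrulediv}) in $\mathcal{C}^\ast$. The conjugations are the usual bookkeeping forced by the complexification and disappear in the real picture.

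For the second statement I would first check that both terms on the right are genuine $L_2(X,\mu)$ functions when $\mu$ is energy dominant. The term $a\partial^\ast\omega$ is the product of the bounded function $a\in\mathcal{C}$ with $\partial^\ast\omega\in L_2(X,\mu)$, hence lies in $L_2(X,\mu)$. For $\omega^\ast\partial a$ I would invoke the direct-integral representation of Theorem \ref{T:measurablefield}: fiberwise $\|\partial_x a\|_{\mathcal{H}_x}^2=\Gamma(a)(x)$, which is $\mu$-essentially bounded precisely because $a\in\mathbb{A}_0$. Fiberwise Cauchy--Schwarz then gives $|\langle\omega_x,\overline{\partial_x a}\rangle_{\mathcal{H}_x}|\le\|\omega_x\|_{\mathcal{H}_x}\,\Gamma(a)(x)^{1/2}$, so $\int_X|\langle\omega_x,\overline{\partial_x a}\rangle_{\mathcal{H}_x}|^2\,d\mu\le\|\Gamma(a)\|_{L_\infty(X,\mu)}\|\omega\|_\mathcal{H}^2<\infty$; by the remark preceding the lemma, $\omega^\ast\partial a$ is the $L_2(X,\mu)$ function $x\mapsto\langle\omega_x,\overline{\partial_x a}\rangle_{\mathcal{H}_x}$. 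Setting $u^\ast:=a\partial^\ast\omega-\omega^\ast\partial a\in L_2(X,\mu)$, the first part gives $\langle a\omega,\partial\varphi\rangle_\mathcal{H}=\langle u^\ast,\varphi\rangle_{L_2(X,\mu)}$ for all $\varphi\in\mathcal{C}$; both sides are $\mathcal{E}_1$-continuous in $\varphi$ and $\mathcal{C}$ is $\mathcal{E}_1$-dense in $\mathcal{F}$ by regularity, so the identity persists for all $\varphi\in\mathcal{F}$. By the defining property of $dom\:\partial^\ast$ this is exactly the assertion $a\omega\in dom\:\partial^\ast$ together with $\partial^\ast(a\omega)=u^\ast$ in $L_2(X,\mu)$.

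The simple-tensor computation is routine, being a repeated application of the energy-measure Leibniz rule; the only genuinely delicate point is the passage in the second statement from the weak $\mathcal{C}^\ast$-identity to honest membership in $dom\:\partial^\ast$. Its crux is the $L_2$-integrability of $\omega^\ast\partial a$, which is where the hypotheses ``$\mu$ energy dominant'' and ``$a\in\mathbb{A}_0$'' are both essential and where the measurable-field representation of Theorem \ref{T:measurablefield} does the real work; the subsequent density argument extending test functions from $\mathcal{C}$ to $\mathcal{F}$ and the identification with the adjoint $\partial^\ast$ are then formal.
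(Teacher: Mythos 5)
Your handling of the second assertion is correct and is essentially the paper's own argument: the fiberwise Cauchy--Schwarz bound $\int_X|\langle\omega_x,\partial_x\overline{a}\rangle_{\mathcal{H}_x}|^2\,d\mu\le\|\Gamma(a)\|_{L_\infty(X,\mu)}\|\omega\|_{\mathcal{H}}^2$ is exactly the estimate the paper gives, and the $\mathcal{E}_1$-density step extending the test functions from $\mathcal{C}$ to $\mathcal{F}$ is a correct filling-in of the paper's terse ``this implies the lemma.''

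The gap is in the first part. The lemma is stated, and is used in Theorem \ref{T:triple}, under the standing assumptions of Section \ref{S:triples}: a regular Dirichlet form with no killing part, \emph{not} assumed strongly local (the examples explicitly include relativistic Schr\"odinger operators and purely nonlocal forms). Your simple-tensor computation hinges on the measure-level Leibniz rule $d\Gamma(fg,h)=f\,d\Gamma(g,h)+g\,d\Gamma(f,h)$, which is valid only for strongly local forms. For a pure jump form with symmetric jump measure $J$ one has $\int h\,d\Gamma(u,v)=\tfrac12\iint h(x)\,\Delta u\,\Delta v\,J(dx\,dy)$ with $\Delta u=u(x)-u(y)$, and a direct expansion gives $\langle a(c\otimes e),\partial\varphi\rangle_{\mathcal{H}}=\tfrac12\iint e(x)a(y)\,\Delta c\,\Delta\varphi\,J$, which differs from your claimed value $\int ae\,d\Gamma(c,\varphi)=\tfrac12\iint e(x)a(x)\,\Delta c\,\Delta\varphi\,J$ by the genuinely nonzero three-difference term $-\tfrac12\iint e(x)\,\Delta a\,\Delta c\,\Delta\varphi\,J$; the other two terms acquire analogous corrections. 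The final identity survives, but not through your intermediate identities. A related point you gloss over: in $(\omega^\ast\eta)(\varphi)=\langle\varphi\omega,\overline{\eta}\rangle_{\mathcal{H}}$ the multiplication by $\varphi$ must be the \emph{right} module action (fiberwise multiplication by $\varphi(x)$, consistent with Theorem \ref{T:measurablefield}); left and right actions coincide only in the strongly local case, and if one uses the left action the product rule genuinely fails for jump forms, precisely by the three-difference term above. The proof the paper points to avoids all of this: it uses only the algebraic module Leibniz rule $\partial(ab)=a\partial b+(\partial a)b$ (which holds by the very definition of the two actions, with no locality input) together with the adjoint relation for the left action and the integration by parts identity (\ref{E:IbP}). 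As written, your argument establishes the first identity only for strongly local forms.
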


\begin{proof}
The validity of (\ref{E:productrulediv}) in $\mathcal{C}^\ast$ is a consequence of the Leibniz rule for $\partial$ together with the integration by parts identity (\ref{E:IbP}), see \cite{HRT}. If $\mu$ is energy dominant, $\omega\in dom\:\partial^\ast$ and $a\in\mathbb{A}_0$, then obviously $a\partial^\ast\omega\in L_2(X,\mu)$. But we also have $\omega^\ast\partial a\in L_2(X,\mu)$, because
\[\int_X|\left\langle \omega_x,\partial_x \overline{a}\right\rangle_{\mathcal{H}_x}|^2\mu(dx)\leq \int_X\left\|\omega_x\right\|_{\mathcal{H}_x}^2\left\|\partial_x a\right\|_{\mathcal{H}_x}^2\mu(dx)\leq \left\|\Gamma(a)\right\|_{L_\infty(X,\mu)}\left\|\omega\right\|_{\mathcal{H}}^2.\] 
This implies the lemma.
\end{proof}

We prove Theorem \ref{T:triple}.
\begin{proof}
Given $a\in\mathbb{A}_0$, let $\pi(a)$ denote the multiplication operator on $\mathbb{H}$, defined by
\[\pi(a)(f,\omega):=(af,a\omega),\ \ (f,\omega)\in\mathbb{H}.\]
Clearly $\pi(a)$ is bounded, and $\pi$ extends to a faithful representation of $\mathbb{A}$ on $\mathbb{H}$, what shows (i). To shorten notation we write again $a$ instead of $\pi(a)$. For (ii) note first that for any $a\in\mathbb{A}_0$ the commutator $[\mathbb{D},a]$ is well defined as linear operator from $\mathcal{C}\oplus \mathcal{H}\subset dom\:\mathbb{D}$ into $\mathcal{C}^\ast\oplus \mathcal{H}\supset\mathbb{H}$, and
by Lemma \ref{L:productrulediv} together with the Leibniz rule we have
\begin{align}
[\mathbb{D},a](f,\omega)&=\mathbb{D}(af, a\omega)-a\mathbb{D}(f,\omega)\notag\\
&=(\partial^\ast(a\omega),\partial(af))-(a\partial^\ast\omega,a\partial f)\notag\\
&=(-\omega^\ast\partial a,f\partial a)\notag
\end{align}
for any $(f,\omega)\in \mathcal{C}\oplus \mathcal{H}$. However, the norm bound
\begin{align}\label{E:normbound}
\left\|[\mathbb{D},a](f,\omega)\right\|_{\mathbb{H}}^2&=\int_X |\left\langle \omega_x,\partial_x \overline{a}\right\rangle_{\mathcal{H}_x}|^2 \mu(dx)+\int_X\left\| f(x)\partial_x a\right\|_{\mathcal{H}_x}^2\mu(dx)\notag\\
&\leq \left\|\Gamma(a)\right\|_{L_\infty(X,\mu)}\left\|(f,\omega)\right\|_{\mathbb{H}}^2.
\end{align}
shows that $[\mathbb{D},a](f,\omega)$ is a member of $\mathbb{H}$, and by the density of $\mathcal{C}\oplus \mathcal{H}$ in $\mathbb{H}$ the commutator $[\mathbb{D},a]$ extends to a bounded linear operator on $\mathbb{H}$. The operator $(1+\mathbb{D})^{-1}$ is compact because if $L$ has discrete spectrum with spectral representation (\ref{E:specL})
then $\mathbb{D}$ admits the spectral representation (\ref{E:Diracrep}) with $\lim_{j\to \infty} (\pm \lambda_j^{-1/2})=0$. 
\end{proof}

Now let $A$ be a point separating $C^\ast$-subalgebra of $C(X)$ and $(A,H,D)$ be a spectral triple for $A$. Let $A_0$ be a dense $\ast$-subalgebra of $A$ such that $[D,a]$ is bounded on $H$, cf. Definition \ref{D:triple} (ii). Then 
\[d_{D}(x,y):=\sup\left\lbrace a(x)-a(y)\ |\ a\in A_0 \ \text{ is such that }\left\|[D,a]\right\|\leq 1\right\rbrace\]
defines a metric in the wide sense on $X$, (a version of) the \emph{Connes distance}.

\begin{theorem}\label{T:Connesmetric}
Let $(\mathcal{E},\mathcal{F})$ be a strongly local Dirichlet form on $L_2(X,\mu)$, let $m$ be energy dominant for $(\mathcal{E},\mathcal{F})$ and assume there exists a point separating coordinate sequence for $(\mathcal{E},\mathcal{F})$ with respect to $m$. Let $\mathbb{D}$, $\mathbb{A}_0$ and $\mathbb{A}$ be given as in (\ref{E:Diracdef}), (\ref{E:AA0}) and (\ref{E:fatA}), respectively. Then 
\[d_{\mathbb{D}}(x,y):=\sup\left\lbrace a(x)-a(y)\ |\ a\in \mathbb{A}_0 \text{ is such that }\left\|[\mathbb{D},a]\right\|\leq 1\right\rbrace\]
is a metric in the wide sense on $X$ and $d_{\mathbb{D}}\leq d_{\Gamma,\mu}$. If $X$ is compact then $d_{\mathbb{D}}= d_{\Gamma,\mu}$.
\end{theorem}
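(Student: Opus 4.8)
The plan is to reduce $d_{\mathbb{D}}$ to the very supremum defining the intrinsic metric, the only change being the admissible class of test functions. The bridge is an exact computation of the commutator norm: I claim that for every $a\in\mathbb{A}_0$,
\[\|[\mathbb{D},a]\|=\|\Gamma(a)\|_{L_\infty(X,\mu)}^{1/2},\]
so that the constraint $\|[\mathbb{D},a]\|\leq 1$ is precisely the condition $\Gamma(a)\leq\mu$. Granting this, the admissible set in the definition of $d_{\mathbb{D}}$ becomes $\{a\in\mathbb{A}_0:\Gamma(a)\leq\mu\}$, whence $d_{\mathbb{D}}(x,y)=\sup\{a(x)-a(y):a\in\mathbb{A}_0,\ \Gamma(a)\leq\mu\}$, which is the supremum in (\ref{E:CCmetric}) restricted to the subclass $\mathbb{A}_0$.

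For the norm identity I would combine the explicit formula from the proof of Theorem \ref{T:triple}, namely $[\mathbb{D},a](f,\omega)=(-\omega^\ast\partial a,\,f\partial a)$ valid on $\mathcal{C}\oplus\mathcal{H}$, with the direct integral description of Theorem \ref{T:measurablefield} (applied with $m=\mu$). The upper bound $\|[\mathbb{D},a]\|\leq\|\Gamma(a)\|_{L_\infty(X,\mu)}^{1/2}$ is exactly the estimate (\ref{E:normbound}). For the matching lower bound I would test only on vectors of the form $(f,0)$, for which $\|[\mathbb{D},a](f,0)\|_{\mathbb{H}}^2=\int_X|f(x)|^2\,\Gamma(a)(x)\,\mu(dx)$. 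Since $\mathcal{C}$ is dense in $L_2(X,\mu)$, the supremum of the right-hand side over $f\in\mathcal{C}$ with $\|f\|_{L_2(X,\mu)}=1$ equals $\|\Gamma(a)\|_{L_\infty(X,\mu)}$, the norm of the multiplication operator by $\Gamma(a)$; concentrating $f$ on the sets $\{\Gamma(a)>\|\Gamma(a)\|_{L_\infty(X,\mu)}-\varepsilon\}$ makes this explicit, and taking square roots gives the reverse inequality.

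With the norm identity in hand the metric properties are formal. As a supremum of $a(x)-a(y)$ over a set of admissible $a$ that is stable under $a\mapsto -a$, the function $d_{\mathbb{D}}$ is symmetric, vanishes on the diagonal, is nonnegative, and obeys the triangle inequality, hence is a metric in the wide sense (possibly $+\infty$-valued); strict positivity off the diagonal follows because the given point separating coordinate sequence provides functions in $\mathbb{A}_0$ lying in the admissible class $\{\Gamma(a)\leq\mu\}$, which therefore separate any pair $x\neq y$. Moreover every $f\in\mathcal{C}=C_c(X)\cap\mathcal{F}$ lies in $\mathcal{F}_{\loc}\cap C(X)$, so $\mathbb{A}_0\subset\mathcal{A}$; consequently the supremum for $d_{\mathbb{D}}$ ranges over a subclass of that for $d_{\Gamma,\mu}$, yielding $d_{\mathbb{D}}\leq d_{\Gamma,\mu}$.

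For the compact case I would upgrade this inclusion to an equality $\mathbb{A}_0=\mathcal{A}$. When $X$ is compact one has $C_c(X)=C(X)$, and taking $K=X$ in the definition of $\mathcal{F}_{\loc}$ gives $\mathcal{F}_{\loc}=\mathcal{F}$; hence $\mathcal{A}=\{f\in\mathcal{F}\cap C(X):\Gamma(f)\in L_\infty(X,\mu)\}=\mathbb{A}_0$, the two admissible classes coincide, and $d_{\mathbb{D}}=d_{\Gamma,\mu}$. I expect the principal obstacle to be the lower bound in the commutator norm identity: one must ensure that the concentrating test functions can be chosen within the dense algebra $\mathcal{C}$ and that the fibrewise formula $\|f\partial a\|_{\mathcal{H}}^2=\int_X|f|^2\,\Gamma(a)\,d\mu$ from Theorem \ref{T:measurablefield} is invoked correctly. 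The set-theoretic identifications ($\mathbb{A}_0\subset\mathcal{A}$ in general, and equality when $X$ is compact) are then routine.
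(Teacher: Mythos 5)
Your proposal is correct and follows essentially the same route as the paper: both rest on the identity $\left\|[\mathbb{D},a]\right\|^2=\left\|\Gamma(a)\right\|_{L_\infty(X,\mu)}$ (upper bound from (\ref{E:normbound}), lower bound by concentrating test functions from $\mathcal{C}$ on sets where $\Gamma(a)$ is nearly maximal), followed by the inclusion $\mathbb{A}_0\subset\mathcal{A}$ and the identification $\mathcal{F}_{\loc}\cap C(X)=\mathcal{C}$ in the compact case. The only cosmetic difference is that the paper realizes the concentrating functions via a compact-inside-open sandwich and a regularity cut-off, arguing by contradiction, whereas you invoke density of $\mathcal{C}$ in $L_2(X,\mu)$ and continuity of the quadratic form $f\mapsto\int_X|f|^2\Gamma(a)\,d\mu$ --- an equivalent justification of the same step.
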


\begin{proof} It is obvious that $d_\mathbb{D}$ is a metric in the wide sense. We next first verify that 
\begin{equation}\label{E:claim}
\left\|[\mathbb{D},a]\right\|^2=\left\|\Gamma(a)\right\|_{L_\infty(X,\mu)}
\end{equation}
for any $a\in\mathbb{A}_0$. By (\ref{E:normbound}) we have $\left\|[\mathbb{D},a]\right\|^2\leq\left\|\Gamma(a)\right\|_{L_\infty(X,\mu)}$. Now assume that $\lambda:=\left\|[\mathbb{D},a]\right\|^2< \left\|\Gamma(a)\right\|_{L_\infty(X,\mu)}$. Then we could find some some Borel set $A\subset X$ and some $\delta>0$ such that $0<\mu(A)<+\infty$ and $\left\|\partial_xa\right\|_{\mathcal{H}_x}^2=\Gamma_x(A)>\lambda(1+\delta)$ for all $x\in A$. Since $\mu$ is Radon, there would be some compact set $K\subset A$ with $\mu(K)>0$ and some open set $U\supset K$ with
\begin{equation}\label{E:measurerelation}
\mu(U\setminus K)< \delta\:\mu(K).
\end{equation} 
Let $f\in\mathcal{C}$ be a function supported in $U$ such that $0\leq f\leq 1$ and $f(x)=1$ for $x\in K$. By the regularity of $(\mathcal{E},\mathcal{F})$ such $f$ exists, cf. \cite[Problem 1.4.1]{FOT94}. Then, according to (\ref{E:measurerelation}), 
\[\delta \int_K f(x)^2\mu(dx)>\int_{X\setminus K}f(x)^2\mu(dx)\]
and therefore  
\[\left\|f\partial a\right\|_\mathcal{H}^2=\int_X f(x)^2\left\|\partial_xa\right\|_{\mathcal{H}_x}^2\mu(dx)>\lambda (1+\delta)\int_K f(x)^2\mu(dx)>\lambda\left\|f\right\|_{L_2(X,\mu)}^2.\]
This would imply $\left\|[\mathbb{D},a](f,0)\right\|_\mathbb{H}>\lambda \left\|(f,0)\right\|_\mathbb{H}$, a contradiction. Therefore (\ref{E:claim}) holds. Since $\mathbb{A}_0\subset\mathcal{F}_{loc}\cap C(X)$ we have $d_\mathbb{D}\leq d_{\Gamma,\mu}$.  If $X$ is compact, then $\mathcal{F}_{loc}\cap C(X)=\mathcal{C}$, hence $\mathcal{A}$ as defined in (\ref{E:A}) coincides with $\mathbb{A}_0$ and consequently $d_\mathbb{D}=d_{\Gamma,\mu}$. 
\end{proof}

\begin{remark}
Either of the following conditions imply the equality $d_{\mathbb{D}}= d_{\Gamma,\mu}$ also for noncompact $X$:
\begin{enumerate}
\item[(i)] The distance $d_{\Gamma,\mu}$ induces the original topology and $(X,d_{\Gamma,\mu})$ is complete.
\item[(ii)] The distance $d_{\mathbb{D}}$ induces the original topology and $(X,d_{\mathbb{D}})$ is complete. 
\item[(iii)] For any relatively compact open set $U\subset X$ there exists a function $\varphi\in\mathcal{C}$ with $0\leq \varphi\leq 1$ and $\varphi(x)=1$ for $x\in U$ such that $\Gamma(\varphi)\leq m$.  
\end{enumerate}
Both (i) and (ii) imply the desired equality by \cite[Appendix 4.2, Proposition 1 (c)]{Stu94} together with (\ref{E:claim}). 
Condition (iii) allows a suitable cut-off argument. Note that it is always possible to construct a finite energy dominant measure for which (iii) is valid: If $X=\bigcup_n U_n$ is an exhaustion of $X$ by an increasing sequence of relatively compact open sets $U_n$ with $\overline{U_n}\subset U_{n+1}$ then there are functions $\varphi_n\in\mathcal{C}$ such that $0\leq \varphi_n\leq 1$, $\varphi_n(x)=1$ for $x\in U_n$ and $\varphi_n(x)=0$ for $x\in U_{n+1}^c$. It suffices to adjoin the countable collection $\left\lbrace \varphi_n\right\rbrace_n$ to the functions in the construction of the measure $m_0$ in Lemma \ref{L:coords}.
\end{remark}

\begin{examples}\mbox{}
\begin{enumerate}
\item[(i)] Consider the Sierpinski gasket $X=SG$, equipped with the resistance metric. From the standard energy form we can construct the  Kusuoka measure $\nu$, \cite{Ka12, Kig08, KZh, Ku89, T08}: There is a complete (up to constants) energy orthonormal system $\left\lbrace h_1, h_2\right\rbrace$ of harmonic functions on $SG$, and $\nu$ is defined as the sum of their energy measures, $\nu:=\Gamma(h_1)+\Gamma(h_2)$.
The Kusuoka measure is energy dominant. Note that the self-similar normalized Hausdorff measure is not energy dominant, \cite{BBST99}.
Let $(\mathcal{E},\mathcal{F})$ be the local regular Dirichlet form on $L_2(SG,\nu)$ induced by the standard energy form on $SG$. Its generator has discrete spectrum, cf. \cite[Theorems 2.4.1, 2.4.2, 3.4.6 and 3.4.7]{Kig01}. Moreover, all functions of finite energy are continuous, and we have $\mathbb{A}=C(SG)$ in (\ref{E:fatA}). Theorem \ref{T:triple} yields a spectral triple $(C(SG),\mathbb{H},\mathbb{D})$
for $C(SG)$.

In a similar manner we can obtain spectral triples associated with regular resistance forms on finitely ramified fractals, equipped with the Kusuoka measure, \cite{Kig03, T08}. Note that any nonatomic Borel measure (with respect to the resistance metric) satisfying some growth condition turns the given resistance form into a Dirichlet form having a generator with discrete spectrum, see \cite[Theorem 8.10, Proposition 8.11 and the remark following the proof of Lemma 8.12]{Kig03}.
\item[(ii)] For the standard self-similar Dirichlet form on the $2$-dimensional Sierpinski carpet, equipped with the natural self-similar normalized Hausdorff measure $\mu$ the energy measures are singular with respect to $\mu$, see \cite{Hino03, Hino05, Hino08}. It is always possible to construct energy dominant measures, \cite{Hino08, HRT}, and under some conditions one can establish an energy dominant measure as new reference measure by means of time change, see \cite[Section 6.2]{FOT94} and the references therein.
\item[(iii)] For the Dirichlet forms associated with Schr\"o\-din\-ger operators and relativistic Schr\"o\-din\-ger operators mentioned in Examples \ref{Ex:Dirac} (iii) the Lebesgue measure is energy dominant, and Theorem \ref{T:triple} yields associated spectral triples.
\item[(iv)] If a purely nonlocal regular Dirichlet form has a jump kernel with respect to the product $\mu\otimes\mu$ of the reference measure $\mu$, then $\mu$ is energy dominant. Conditions for general purely nonlocal Dirichlet forms to have discrete spectrum are provided in \cite[Section 4, in particular Corollary 4.2]{Wa00}. In some cases, for instance when using subordination, \cite{JSch99, K02}, discrete spectrum may be observed directly.
\end{enumerate}
\end{examples}

\begin{remark}
If $\mu'$ is a nonnegative Radon measure on $X$ such that $\mu$ and $\mu'$ are mutually absolutely continuous and $\mu\leq \mu'$, then $(\mathcal{E},\mathcal{C})$ is closable in $L_2(X,\mu')\subset L_2(X,\mu)$ and its closure $(\mathcal{E},\mathcal{F}')$ is a regular Dirichlet form with $\mathcal{F}'\subset \mathcal{F}$.
If $\mathcal{H}'$ defines the Hilbert space $\mathcal{H}$ defined with $\mu'$ in place of $\mu$, and similarly for the other objects, we observe
 $\mathcal{H}'=\mathcal{H}$,  $\mathbb{H}'\subset \mathbb{H}$ and $\mathbb{A}_0'=\mathbb{A}_0$. The operator $\mathbb{D}$ is an extension of (a restriction of) $\mathbb{D}'$. For the corresponding distances we have $d_\mathbb{D}\leq d_{\mathbb{D}'}$.
\end{remark}

\section[Metrics and gradient fields]{Metrics and gradient fields}\label{S:gradient}

The intrinsic metric $d_{\Gamma,m}$ of a strongly local Dirichlet form with respect to an energy dominant measure $m$ can also be expressed in terms of vector fields. As in Section \ref{S:Dirichlet} let $X$ be a locally compact separable metric space, $\mu$ a nonnegative Radon measure on $X$ such that $\mu(U)>0$ for all nonempty open $U\subset X$ and $(\mathcal{E},\mathcal{F})$ a strongly local Dirichlet form on $L_2(X,\mu)$.
Let $B_b(X)$ denote the space of bounded Borel functions on $X$. We consider the tensor product $\mathcal{F}_{loc}\cap C(X)\otimes \mathcal{B}_b(X)$. For any compact set $K\subset X$ we can define a symmetric bilinear form on $\mathcal{F}_{loc}\cap C(X)\otimes \mathcal{B}_b(X)$ by 
\begin{equation}\label{E:norm}
\left\langle \sum_i f_i\otimes g_i,\sum_i f_i\otimes g_i\right\rangle_{\mathcal{H}(K)}:=\sum_i\sum_j\int_K g_ig_jd\Gamma(f_i,f_j).
\end{equation}
This form is nonnegative definite, as may be seen using step functions in place of the $g_i$. Its square root defines a seminorm $\left\|\cdot\right\|_{\mathcal{H}(K)}$ on $\mathcal{F}_{loc}\cap C(X)\otimes \mathcal{B}_b(X)$, and by $\mathcal{H}(K)$ we denote the Hilbert space obtained by factoring out elements of zero seminorm and completing. Similarly as before we define a right action of $\mathcal{B}_b(X)$ on $\mathcal{F}_{loc}\cap C(X)\otimes \mathcal{B}_b(X)$ by 
\begin{equation}\label{E:right}
(f\otimes g)h:=f\otimes (gh).
\end{equation}
For any $K\subset X$ compact, $\left\|(\sum_i f_i\otimes g_i)h\right\|_{\mathcal{H}(K)}\leq \sup_{x\in K}|h(x)|\left\|\sum_i f_i\otimes g_i\right\|_{\mathcal{H}(K)}$, hence (\ref{E:right}) extends to an action of $\mathcal{B}_b(X)$ that is bounded on $\mathcal{H}(K)$. For any finite linear combination $\sum_i f_i\otimes g_i$ from $\mathcal{F}_{loc}\cap C(X)\otimes \mathcal{B}_b(X)$ the integrand 
\[\sum_i\sum_j g_ig_jd\Gamma(f_i,f_j)\]
on the right hand side of (\ref{E:norm}) defines a nonnegative measure on $X$. Consequently we have $\left\|\cdot\right\|_{\mathcal{H}(K)}\leq \left\|\cdot\right\|_{\mathcal{H}(K')}$ for any two compact sets $K, K'\subset X$ with $K\subset K'$. This implies that the restriction $v\mathbf{1}_K$ to $K$ in the sense of (\ref{E:right}) of any $v\in\mathcal{H}(K')$ is a well defined element of $\mathcal{H}(K)$. Together with this restriction the spaces $\mathcal{H}(K)$, $K\subset X$ compact, form an inverse system of Hilbert spaces, and we denote its inverse limit by $\mathcal{H}_{loc}$. If $(K_n)_n$ is an exhaustion of $X$ by compact sets $K_n$ then the family of seminorms $\left\|\cdot\right\|_{\mathcal{H}(K_n)}$, $n\in\mathbb{N}$, induces the topology of $\mathcal{H}_{loc}$. The space $\mathcal{H}_{loc}$ is locally convex. 
A left action of $\mathcal{F}_{loc}\cap C(X)$ on $\mathcal{F}_{loc}\cap C(X)\otimes \mathcal{B}_b(X)$ can be defined by
\begin{equation}\label{E:left}
h(f\otimes g):=(fh)\otimes g-h\otimes (fg).
\end{equation}
Corollary \ref{C:multGamma} and the nonnegativity of the measure on the right hand side of (\ref{E:norm}) imply
$\left\|h(\sum_i f_i\otimes g_i)\right\|_{\mathcal{H}(K)}\leq \sup_{x\in K}|h(x)|\left\|\sum_i f_i\otimes g_i\right\|_{\mathcal{H}(K)}$,
hence also (\ref{E:left}) extends to a bounded action on each $\mathcal{H}(K)$. The definition $\partial f:=f\otimes \mathbf{1}$ now provides a linear operator $\partial:\mathcal{F}_{loc}\cap C(X)\mapsto\mathcal{H}_{loc}$ such that for any $K\subset X$ the operator $\partial$ acts as a bounded derivation, more precisely, $\left\|\partial f\right\|_{\mathcal{H}(K)}^2=\Gamma(f)(K)$
and $\partial (fg)=f\partial g+g\partial f$.

Now let $m$ be an energy dominant measure for $(\mathcal{E},\mathcal{F})$. By Theorem \ref{T:measurablefield} there exists a measurable field of Hilbert modules $\left\lbrace\mathcal{H}_x\right\rbrace_{x\in X}$ such that for any $K\subset X$ compact the space $\mathcal{H}(K)$ is isometrically isomorphic to the direct integral $\int_K^\oplus \mathcal{H}_x m(dx)$, in particular
\[\left\|v\right\|_{\mathcal{H}(K)}=\int_K \left\|v_x\right\|_{\mathcal{H}_x}^2 m(dx)\]
for any measurable section $v=(v_x)_{x\in X}$ of $\left\lbrace\mathcal{H}_x\right\rbrace_{x\in X}$. Let $L_\infty(X,m,\left\lbrace\mathcal{H}_x\right\rbrace)$ denote the space of $m$-equivalence classes of measurable sections $v=(v_x)_{x\in X}$ such that \[\left\|v\right\|_{L_\infty(X,m,\left\lbrace\mathcal{H}_x\right\rbrace)}:=\esssup_{x\in X} \left\|v_x\right\|_{\mathcal{H}_x}\] 
is finite. The space $L_\infty(X,m,\left\lbrace\mathcal{H}_x\right\rbrace)$ is a Banach space, as can be seen using a version of the classical Riesz-Fischer type argument. Since the measure $m$ is Radon, we have $L_\infty(X,m,\left\lbrace\mathcal{H}_x\right\rbrace)\subset \mathcal{H}_{loc}$. We refer to $L_\infty(X,m,\left\lbrace\mathcal{H}_x\right\rbrace)$ as the \emph{space of bounded vector fields}. It allows a natural description of the intrinsic metric in terms of functions with gradient fields that are $L_\infty$-bounded by one.

\begin{theorem}
Let $(\mathcal{E},\mathcal{F})$ be a strongly local Dirichlet form on $L_2(X,\mu)$ and let $m$ be an energy dominant measure for $(\mathcal{E},\mathcal{F})$. We have 
\[d_{\Gamma,m}(x,y)=\sup\left\lbrace f(x)-f(y)\ |\  f\in\mathcal{F}_{loc}\cap C(X)\text{ is such that } \left\|\partial f\right\|_{L_\infty(X,m,\left\lbrace\mathcal{H}_x\right\rbrace)}\leq 1\right\rbrace.\]
for all $x,y\in X$.
\end{theorem}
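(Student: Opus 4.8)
The plan is to show that the two families of admissible functions -- the one appearing in the definition (\ref{E:CCmetric}) of $d_{\Gamma,m}$ and the one on the right-hand side of the asserted identity -- literally coincide, after which the equality of the two suprema is immediate. By (\ref{E:CCmetric}) together with the definition (\ref{E:A}) of $\mathcal{A}$, the supremum defining $d_{\Gamma,m}(x,y)$ ranges over those $f\in\mathcal{F}_{loc}\cap C(X)$ whose energy density satisfies $d\Gamma(f)/dm\leq 1$ $m$-a.e.\ (the condition $\Gamma(f)\leq m$ already forces $\Gamma(f)\in L_\infty(X,m)$, so $\mathcal{A}$-membership is subsumed), whereas the supremum on the right ranges over those $f\in\mathcal{F}_{loc}\cap C(X)$ with $\esssup_{x\in X}\left\|\partial_x f\right\|_{\mathcal{H}_x}\leq 1$. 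It therefore suffices to establish the pointwise identity
\[
\frac{d\Gamma(f)}{dm}(x)=\left\|\partial_x f\right\|_{\mathcal{H}_x}^2\qquad m\text{-a.e.}
\]
for every $f\in\mathcal{F}_{loc}\cap C(X)$, since then $\left\|\partial f\right\|_{L_\infty(X,m,\left\lbrace\mathcal{H}_x\right\rbrace)}\leq 1$ is equivalent to $\Gamma(f)\leq m$.

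First I would combine the two norm representations assembled in this section. On one hand, the local derivation satisfies $\left\|\partial f\right\|_{\mathcal{H}(K)}^2=\Gamma(f)(K)$ for every compact $K\subset X$. On the other hand, by Theorem \ref{T:measurablefield} the space $\mathcal{H}(K)$ is isometrically isomorphic to $\int_K^\oplus\mathcal{H}_x\,m(dx)$, so $\partial f$ is represented by the measurable section $(\partial_x f)_{x\in X}$ with
\[
\left\|\partial f\right\|_{\mathcal{H}(K)}^2=\int_K\left\|\partial_x f\right\|_{\mathcal{H}_x}^2\,m(dx).
\]
Since $m$ is energy dominant, $\Gamma(f)$ is absolutely continuous with respect to $m$, and comparing the two expressions gives
\[
\int_K\left\|\partial_x f\right\|_{\mathcal{H}_x}^2\,m(dx)=\Gamma(f)(K)=\int_K\frac{d\Gamma(f)}{dm}\,dm
\]
for every compact $K\subset X$. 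As $X$ is locally compact and separable and $m$ is Radon, letting $K$ vary and invoking the essential uniqueness of the Radon-Nikodym derivative yields the desired pointwise identity $m$-a.e.

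Having identified the two admissible sets, the claimed equality
\[
d_{\Gamma,m}(x,y)=\sup\left\lbrace f(x)-f(y)\ |\ f\in\mathcal{F}_{loc}\cap C(X),\ \left\|\partial f\right\|_{L_\infty(X,m,\left\lbrace\mathcal{H}_x\right\rbrace)}\leq 1\right\rbrace
\]
then holds for all $x,y\in X$, since the two suprema are taken over the same set of functions.

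The step I expect to require the most care is the passage from $\mathcal{F}$ to $\mathcal{F}_{loc}\cap C(X)$. Theorem \ref{T:measurablefield} produces the fibers $\left\lbrace\mathcal{H}_x\right\rbrace$ and the direct integral representation of $\mathcal{H}$ for $f\in\mathcal{F}$, and one must check that the section $(\partial_x f)_{x\in X}$ representing $\partial f\in\mathcal{H}_{loc}$ is well defined $m$-a.e.\ independently of the compact exhaustion and is compatible across the inverse system $\left\lbrace\mathcal{H}(K)\right\rbrace_K$. This is exactly what the strong locality of $(\mathcal{E},\mathcal{F})$ provides: on each compact $K$ one replaces $f$ by some $u\in\mathcal{F}$ agreeing with $f$ on $K$, so that $\Gamma(f)$ and $\partial_x f$ agree $m$-a.e.\ on $K$ with $\Gamma(u)$ and $\partial_x u$, and the local sections glue consistently under the restriction maps defining $\mathcal{H}_{loc}$. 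Once this consistency is in place, the remainder is the routine identification of index sets described above.
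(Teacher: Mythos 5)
Your proposal is correct and follows exactly the route the paper intends: the section's identities $\left\|\partial f\right\|_{\mathcal{H}(K)}^2=\Gamma(f)(K)$ and $\left\|\partial f\right\|_{\mathcal{H}(K)}^2=\int_K\left\|\partial_xf\right\|_{\mathcal{H}_x}^2\,m(dx)$ (Theorem \ref{T:measurablefield}) give $d\Gamma(f)/dm=\left\|\partial_xf\right\|_{\mathcal{H}_x}^2$ $m$-a.e., so the two admissible classes of functions coincide and the suprema agree. The paper leaves this verification implicit, and your handling of the localization from $\mathcal{F}$ to $\mathcal{F}_{loc}\cap C(X)$ supplies the only point that needed checking.
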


\section[Appendix]{Appendix}\label{S:App}

The following statements are versions of results on composition and multiplication from \cite[Chapter I]{BH91}. Let $X$ be a locally compact separable metric space, $\mu$ a nonnegative Radon measure on $X$ such that $\mu(U)>0$ for all nonempty open $U\subset X$ and let $(\mathcal{E},\mathcal{F})$ be a regular symmetric Dirichlet form on $L_2(X,\mu)$.

Given $n\in\mathbb{N}\setminus\left\lbrace 0\right\rbrace$ let $\mathcal{T}_n^0$ denote the set of all normal contractions, that is, functions $F:\mathbb{R}^n\to\mathbb{R}$ such that $F(0)=0$ and $|F(x)-F(y)|\leq \sum_{i=1}^n|x_i-y_i|,\ \ x,y\in\mathbb{R}^n$.

\begin{lemma}\label{L:composeGamma}
Let $f_1,...,f_n\in\mathcal{C}$, $F\in\mathcal{T}_n^0$ and $g:=F(f_1,...,f_n)$. Then we have
\[(\mathcal{E}(gh,g)-\frac12\mathcal{E}(g^2h))^{1/2}\leq\sum_{i=1}^n (\mathcal{E}(f_ih,f_i)-\frac12\mathcal{E}(f_i^2,h))^{1/2}\]
for all nonnegative $h\in\mathcal{C}$.
\end{lemma}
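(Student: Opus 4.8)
The plan is to read the asserted inequality as a triangle inequality for the seminorm $p_h(u):=\big(\int h\,d\Gamma(u)\big)^{1/2}$ under composition with a normal contraction. Since $\int h\,d\Gamma(u)=\mathcal{E}(hu,u)-\tfrac12\mathcal{E}(h,u^2)$, the claim is exactly $p_h(g)\le\sum_{i=1}^n p_h(f_i)$ for $g=F(f_1,\dots,f_n)$. First I would dispose of the non-smoothness of $F$ by mollification: set $F_\varepsilon:=(F*\rho_\varepsilon)-(F*\rho_\varepsilon)(0)$ with $\rho_\varepsilon$ a standard mollifier, so that $F_\varepsilon\in C^\infty\cap\mathcal{T}_n^0$ (convolution does not increase the $\ell^1$-Lipschitz constant, and the recentering restores $F_\varepsilon(0)=0$) and $F_\varepsilon\to F$ locally uniformly. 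Writing $g_\varepsilon:=F_\varepsilon(f_1,\dots,f_n)\in\mathcal{C}$, it then suffices to prove the inequality for each smooth $F_\varepsilon$ and pass to the limit.

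For smooth $F$ I would use the Beurling--Deny decomposition $\Gamma=\Gamma^{(c)}+\Gamma^{(j)}$ into strongly local and pure jump parts (the killing part being assumed zero) and treat the two summands separately. For the jump part, the symmetry of the jump measure $J$ gives the representation $\int h\,d\Gamma^{(j)}(u)=\iint h(x)\,(u(x)-u(y))^2\,J(dx,dy)$; the normal contraction bound $|F(f(x))-F(f(y))|\le\sum_i|f_i(x)-f_i(y)|$ together with Minkowski's inequality in $L_2$ of the nonnegative measure $h(x)\,J(dx,dy)$ then yields $\big(\int h\,d\Gamma^{(j)}(g)\big)^{1/2}\le\sum_i\big(\int h\,d\Gamma^{(j)}(f_i)\big)^{1/2}$, with no differentiability required. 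For the local part I would apply Theorem \ref{T:measurablefield} to the strongly local part $\mathcal{E}^{(c)}$ and a fixed energy dominant measure $m$, obtaining a field $\{\mathcal{H}_x\}$ with $d\Gamma^{(c)}(f_i,f_j)/dm=\langle\partial_xf_i,\partial_xf_j\rangle_{\mathcal{H}_x}$; the classical chain rule $\partial_xg=\sum_i(\partial_iF)(f(x))\,\partial_xf_i$ combined with $|\partial_iF|\le1$ (the differential form of the normal contraction property) gives the pointwise bound $\|\partial_xg\|_{\mathcal{H}_x}\le\sum_i\|\partial_xf_i\|_{\mathcal{H}_x}$, and integration against $h\,dm$ via Minkowski produces the analogous local estimate. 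Finally I would recombine the two: abbreviating $c_i:=\big(\int h\,d\Gamma^{(c)}(f_i)\big)^{1/2}$ and $j_i:=\big(\int h\,d\Gamma^{(j)}(f_i)\big)^{1/2}$, the two partial estimates give $\int h\,d\Gamma(g)\le(\sum_i c_i)^2+(\sum_i j_i)^2=\big\|\sum_i(c_i,j_i)\big\|_{\R^2}^2$, and the triangle inequality in $\R^2$ bounds this by $\big(\sum_i(c_i^2+j_i^2)^{1/2}\big)^2=\big(\sum_i\big(\int h\,d\Gamma(f_i)\big)^{1/2}\big)^2$, which is the claim for $F_\varepsilon$.

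The main obstacle is the limit $\varepsilon\to0$, which I expect to be the only genuinely delicate point. Here I would first observe that $p_h$ is a continuous seminorm on $(\mathcal{F},\mathcal{E}_1)$, since $p_h(u)^2\le\|h\|_\infty\,\Gamma(u)(X)\le\|h\|_\infty\,\mathcal{E}(u)$ by (\ref{E:approxGamma}) with $g=0$; being convex and strongly continuous, $p_h$ is weakly lower semicontinuous. To secure weak convergence I would extract a uniform energy bound for the $g_\varepsilon$: applying the already proven smooth inequality with $h$ replaced by an increasing sequence of cutoffs $\psi_R\nearrow1$ and using monotone convergence gives $\mathcal{E}(g_\varepsilon)^{1/2}=\Gamma(g_\varepsilon)(X)^{1/2}\le\sum_i\mathcal{E}(f_i)^{1/2}$ uniformly in $\varepsilon$; since $g_\varepsilon\to g$ uniformly on the common compact support $\bigcup_i\supp f_i$ (hence in $L_2(X,\mu)$), a subsequence converges weakly in $(\mathcal{F},\mathcal{E}_1)$ to $g$, so in particular $g\in\mathcal{F}$. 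Weak lower semicontinuity of $p_h$ then yields $p_h(g)\le\liminf_\varepsilon p_h(g_\varepsilon)\le\sum_i p_h(f_i)$, the right-hand side being independent of $\varepsilon$, which completes the argument. The remaining ingredients---the chain rule for the strongly local part and the membership $g\in\mathcal{C}$---are standard facts from \cite{FOT94, BH91}.
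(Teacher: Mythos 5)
Your argument is essentially correct, but it takes a genuinely different and considerably heavier route than the paper, which simply invokes the approximating--form argument of \cite[Propositions I.2.3.3 and I.3.3.1]{BH91}. That argument writes, for $t>0$, $\mathcal{E}^{(t)}(u,v):=t^{-1}\langle u-T_tu,v\rangle_{L_2(X,\mu)}$ and observes the identity
\[
\mathcal{E}^{(t)}(hf,f)-\tfrac12\mathcal{E}^{(t)}(h,f^2)=\tfrac{1}{2t}\iint h(x)\,(f(x)-f(y))^2\,\sigma_t(dx\,dy)+\tfrac{1}{2t}\int h f^2\,(1-T_t\mathbf{1})\,d\mu
\]
with $\sigma_t(dx\,dy)=T_t(x,dy)\mu(dx)$ symmetric. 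The normal contraction bounds $|g(x)-g(y)|\le\sum_i|f_i(x)-f_i(y)|$ and $|g|\le\sum_i|f_i|$ together with Minkowski's inequality in the $L_2$ space of the combined nonnegative measure give the claimed subadditivity for $\mathcal{E}^{(t)}$ at every $t$, and letting $t\downarrow 0$ finishes the proof. This is a short, self-contained computation that needs no smoothness of $F$, no Beurling--Deny decomposition, no chain rule, no direct integral, and no limiting argument in a mollification parameter; it also handles a nonvanishing killing part automatically. Your route buys a more ``geometric'' picture (the jump and local parts are estimated by explicit pointwise contraction and gradient bounds, recombined by the triangle inequality in $\mathbb{R}^2$), but at the cost of importing several structural results whose proofs are themselves of the same depth as the lemma.

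Two caveats on your version. First, the Appendix does \emph{not} assume the killing part vanishes (that hypothesis is made only in Sections \ref{S:Dirac} and \ref{S:triples}), so your decomposition $\Gamma=\Gamma^{(c)}+\Gamma^{(j)}$ is incomplete as stated; the killing part contributes $\tfrac12\int hg^2\,dk$ to $\int h\,d\Gamma(g)$, which is however dominated via $|g|\le\sum_i|f_i|$ and Minkowski in $L_2(h\,dk)$, so the fix is one line (and then the recombination is a triangle inequality in $\mathbb{R}^3$ rather than $\mathbb{R}^2$). Relatedly, your uniform energy bound $\mathcal{E}(g_\varepsilon)^{1/2}=\Gamma(g_\varepsilon)(X)^{1/2}$ uses the same vanishing-killing assumption; it is cleaner to quote directly that normal contractions operate, $\mathcal{E}(F_\varepsilon(f_1,\dots,f_n))^{1/2}\le\sum_i\mathcal{E}(f_i)^{1/2}$, from \cite{FOT94}. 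Second, Theorem \ref{T:measurablefield} is stated for the full form, not for its strongly local part; what you actually need there is only that the matrix of densities $\bigl(d\Gamma^{(c)}(f_i,f_j)/dm\bigr)_{i,j}$ is nonnegative definite $m$-a.e., which yields the pointwise Cauchy--Schwarz estimate without invoking the direct integral. With these adjustments the proof goes through.
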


The lemma can by proved by arguments similar to those used for \cite[Propositions I.2.3.3 and I.3.3.1]{BH91}. The following corollary is an immediate consequence, cf. \cite[Corollary I.3.3.2]{BH91}.

\begin{corollary}\label{C:multGamma}
For any $f,g\in\mathcal{C}$ and any Borel set $A\subset X$ we have 
\[\Gamma(fg)(A)^{1/2}\leq \sup_{x\in A}|f(x)|\Gamma(g)(A)^{1/2}+\sup_{x\in A}|g(x)|\Gamma(f)(A)^{1/2}.\]
If $m$ is energy dominant, then in particular
\[\Gamma(fg)(x)\leq 2\left(\Gamma(f)(x)\left\|g\right\|^2_{L_\infty(X,m)}+\Gamma(g)(x)\left\|f\right\|_{L_\infty(X,m)}^2\right)\]
for $m$-a.e. $x\in X$.
\end{corollary}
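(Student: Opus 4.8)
The plan is to deduce both displayed inequalities from Lemma \ref{L:composeGamma} by writing the (suitably truncated) product as the image of $f$ and $g$ under a normal contraction, and then passing from test functions to Borel sets and finally to densities. Throughout I write $\Gamma(u)(h):=\int h\,d\Gamma(u)=\mathcal{E}(hu,u)-\tfrac12\mathcal{E}(h,u^2)$, so that the conclusion of Lemma \ref{L:composeGamma} reads $\Gamma(F(f_1,\dots,f_n))(h)^{1/2}\leq\sum_{i=1}^n\Gamma(f_i)(h)^{1/2}$ for nonnegative $h\in\mathcal{C}$. Fix $f,g\in\mathcal{C}$ and a Borel set $A\subset X$, abbreviate $a:=\sup_{x\in A}|f(x)|$ and $b:=\sup_{x\in A}|g(x)|$, and assume $a,b>0$ (the degenerate cases being simpler). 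With $P(r):=(r\wedge ab)\vee(-ab)$ the truncation to $[-ab,ab]$, I would use
\[
\Phi(s,t):=\tfrac{1}{ab}\,P(s)\,P(t),\qquad(s,t)\in\mathbb{R}^2,
\]
and check that $\Phi(0,0)=0$ and $|\Phi(s,t)-\Phi(s',t')|\leq|s-s'|+|t-t'|$, so that $\Phi\in\mathcal{T}_2^0$.

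Next I would apply Lemma \ref{L:composeGamma} with $n=2$, $f_1:=bf$, $f_2:=ag$ and $F:=\Phi$. On $A$ one has $|bf|\leq ab$ and $|ag|\leq ab$, hence $\Phi(bf,ag)=\tfrac{1}{ab}(bf)(ag)=fg$ there, while the bilinearity of $\Gamma$ gives $\Gamma(bf)=b^2\Gamma(f)$ and $\Gamma(ag)=a^2\Gamma(g)$. The lemma then yields, for every nonnegative $h\in\mathcal{C}$,
\[
\Gamma(\Phi(bf,ag))(h)^{1/2}\leq b\,\Gamma(f)(h)^{1/2}+a\,\Gamma(g)(h)^{1/2}.
\]
Letting $h\uparrow\mathbf{1}_A$ and using monotone convergence for the Radon energy measures converts this into the corresponding set inequality with $\Gamma(\Phi(bf,ag))(A)$ on the left-hand side.

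The hard part will be to replace $\Gamma(\Phi(bf,ag))(A)$ by $\Gamma(fg)(A)$: the functions $\Phi(bf,ag)$ and $fg$ coincide on $A$ but not elsewhere, so this step is precisely where one must use that the restriction of an energy measure to $A$ is insensitive to altering the function off $A$. Invoking the locality of $\Gamma$ to identify $\Gamma(\Phi(bf,ag))(A)=\Gamma(fg)(A)$ (truncation likewise leaving $\Gamma(f)$ and $\Gamma(g)$ unchanged on $A$) then produces the first inequality $\Gamma(fg)(A)^{1/2}\leq a\,\Gamma(g)(A)^{1/2}+b\,\Gamma(f)(A)^{1/2}$. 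I expect this to be the genuine obstacle, since it is exactly the point at which the argument relies on there being no contribution to $\Gamma$ from values of $f,g$ outside $A$.

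Finally, I would obtain the pointwise inequality by combining the first one with an absolute-continuity argument. Since $\mathcal{C}$ is an algebra, $fg\in\mathcal{C}$, and because $m(U)>0$ for every nonempty open $U$ the essential suprema with respect to $m$ coincide with the suprema of these continuous functions. Applying the first inequality to an arbitrary Borel set $B$ and bounding $\sup_{B}|f|\leq\|f\|_{L_\infty(X,m)}$ and $\sup_{B}|g|\leq\|g\|_{L_\infty(X,m)}$ gives
\[
\Gamma(fg)(B)^{1/2}\leq\|f\|_{L_\infty(X,m)}\,\Gamma(g)(B)^{1/2}+\|g\|_{L_\infty(X,m)}\,\Gamma(f)(B)^{1/2}.
\]
Squaring and using $(s+t)^2\leq2(s^2+t^2)$ yields $\Gamma(fg)(B)\leq 2\|f\|_{L_\infty(X,m)}^2\Gamma(g)(B)+2\|g\|_{L_\infty(X,m)}^2\Gamma(f)(B)$ for all Borel $B$. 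As $m$ is energy dominant all three measures are absolutely continuous with respect to $m$, and since the estimate holds for every $B$, a comparison of Radon--Nikodym densities gives the asserted bound $\Gamma(fg)(x)\leq2\big(\Gamma(f)(x)\|g\|_{L_\infty(X,m)}^2+\Gamma(g)(x)\|f\|_{L_\infty(X,m)}^2\big)$ for $m$-a.e.\ $x$.
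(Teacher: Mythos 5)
Your truncation $\Phi(s,t)=\tfrac{1}{ab}P(s)P(t)\in\mathcal{T}_2^0$ applied to $f_1=bf$, $f_2=ag$ is exactly the mechanism behind the paper's proof, which consists of the single remark that the corollary follows from Lemma \ref{L:composeGamma} as in \cite[Corollary I.3.3.2]{BH91}. Two small technical points in your outline are fine but should be phrased correctly: for a general Borel set $A$ you cannot literally take $h\uparrow\mathbf{1}_A$ within $\mathcal{C}$; instead sandwich a compact $K\subset A$ inside an open $U$ with small $\Gamma$-measure gap, pick $h\in\mathcal{C}$ with $\mathbf{1}_K\le h\le \mathbf{1}_U$ (possible by regularity of the form), and use inner/outer regularity of the three Radon measures. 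The Radon--Nikodym comparison for the second display is correct as you state it.

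The step you flag as the obstacle is, however, a genuine gap in the generality of the Appendix: no locality is assumed there, and for nonlocal forms the restriction of an energy measure to $A$ \emph{does} depend on the values of the function off $A$, so the identification $\Gamma(\Phi(bf,ag))|_A=\Gamma(fg)|_A$ is unavailable. In fact, with local suprema the first display fails for nonlocal forms: on $X=\{1,2\}$ with counting measure and $\mathcal{E}(u,v)=(u(1)-u(2))(v(1)-v(2))$ one computes $\Gamma(u)(\{1\})=\tfrac12(u(1)-u(2))^2$, and taking $f=g$ with $f(1)=0$, $f(2)=1$ and $A=\{1\}$ gives left side $2^{-1/2}$ and right side $0$ --- note this is one of your ``degenerate cases'' $a=b=0$, which for $\sup_A$ is not simpler but is precisely where things break. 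The repair, and what \cite[Corollary I.3.3.2]{BH91} actually does, is to truncate at the global bounds $a=\sup_X|f|$, $b=\sup_X|g|$ (finite since $f,g\in C_c(X)$): then $P(bf)=bf$ and $P(ag)=ag$ everywhere, so $\Phi(bf,ag)=fg$ identically, no off-$A$ comparison is needed, and Lemma \ref{L:composeGamma} yields the first display for every Borel $A$ with $\sup_X$ in place of $\sup_A$; this suffices for the second display and for the corollary's only nonlocal application (Section \ref{S:triples}). Your $\sup_A$ argument does become correct when $(\mathcal{E},\mathcal{F})$ is strongly local --- the setting of Section \ref{S:gradient}, where the local form of the bound is used --- because then the standard fact that $\Gamma(w)$ does not charge $\{w=0\}$ gives $\Gamma(u-v)(A)=0$ whenever $u=v$ on $A$, and Cauchy--Schwarz for mutual energy measures applied to $\Gamma(u)-\Gamma(v)=\Gamma(u+v,u-v)$ yields $\Gamma(u)|_A=\Gamma(v)|_A$; but this locality input must be quoted explicitly, and it restricts the $\sup_A$ version of the statement to the strongly local case.
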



\begin{thebibliography}{99}

\bibitem{AM} M. Aizenman,  S. Molchanov,  
\emph{Localization at large disorder and at extreme energies: an
              elementary derivation}. Comm. Math. Phys. {\bf 157} (1993), 245--278.


\bibitem{ADT1} 
E. Akkermans,  G. Dunne, A. Teplyaev,
\emph{Physical Consequences of Complex Dimensions of Fractals}, Europhys. Lett. 88, 40007 (2009).

\bibitem{ADT2}
E. Akkermans, G. Dunne, A. Teplyaev, \emph{Thermodynamics of photons on fractals}, Phys. Rev. Lett. 105(23):230407, 2010.


\bibitem{BB89}
M.T. Barlow, R.F. Bass, \emph{The construction of Brownian motion on the Sierpinski carpet}, 
Ann. Inst. H. Poincar\'e {\bf 25} (1989), 225-257.

\bibitem{BB99}
M.T. Barlow, R.F. Bass, \emph{Brownian motion and harmonic analysis on Sierpinski carpets}, Canad. J. Math. 51 (1999), 673--744.

\bibitem{BBKT} M. T. Barlow, R. F. Bass, T. Kumagai, and A. Teplyaev, \emph{Uniqueness of Brownian motion on Sierpinski carpets.}\ 
J. Eur. Math. Soc. 
{\bf12} (2010), 655--701. 

\bibitem{BK} M. T. Barlow and J. Kigami,
\emph{Localized eigenfunctions of the
Laplacian on p.c.f. self-similar sets.}\
J. London Math. Soc.,  \textbf{56} (1997),  320--332.


\bibitem{BM95}
M. Biroli, U. Mosco, \emph{A Saint-Venant type principle for Dirichlet forms on discontinuous media},
Ann. Mat. Pura Appl. 169 (4) (1995), 125--181.

\bibitem{BMR}
J. Bellissard, M. Marcolli, K. Reihani, \emph{Dynamical systems on spectral metric spaces},
preprint %
arXiv: 1008.4617   

\bibitem{BBST99}
O. Ben-Bassat, R.S. Strichartz, A. Teplyaev, \emph{What is not in the domain of Sierpinski gasket type fractals},
J. Funct. Anal. 166 (1999), 197-217.
\bibitem{BH91}
N. Bouleau, F. Hirsch, \emph{Dirichlet Forms and Analysis on Wiener Space},
deGruyter Studies in Math. 14, deGruyter, Berlin, 1991.

\bibitem{BdMLSt}
A. Boutet de Monvel, D. Lenz, P. Stollmann, \emph{Sch'nol's theorem for strongly local forms},
Israel J. Math. 173 (2009), 189-211.

 \bibitem{CL} R. Carmona and J. Lacroix,
\emph{Spectral Theory of Random Schr\"odinger Operators.}\ Birkh\"auser, 1990.


\bibitem{CMS90}
R. Carmona, W.C. Masters, B. Simon, \emph{Relativistic Schr\"odinger operators: Asymptotic behaviour of the eigenfunctions},
J. Funct. Anal. 91 (1990), 117-142.

\bibitem{CIL}
E. Christensen, C. Ivan, M.L. Lapidus, \emph{Dirac operators and spectral triples for some fractal sets built on curves},
Adv. Math. 217 (1) (2008), 42-78.

\bibitem{CIS}
E. Christensen, C. Ivan, E. Schrohe, \emph{Spectral triples and the geometry of fractals}, J. Noncomm. Geometry, 2 (6) (2012), 249-274.

\bibitem{C06}
F. Cipriani, \emph{Dirichlet forms as Banach algebras and applications}, Pacific J. Math. 223 (2) (2006), 229-249.

\bibitem{CGIS12}
F. Cipriani, D. Guido, T. Isola, J.-L. Sauvageot, \emph{Spectral triples for the Sierpinski gasket},
preprint %
arXiv:1112.6401

\bibitem{CGIS13}
F. Cipriani, D. Guido, T. Isola, J.-L. Sauvageot, 
\emph{Integrals and Potentials of differential $1$-forms on the Sierpinski gasket}, Adv. in Math. 239 (2013), 128-163.

\bibitem{CS03}
F. Cipriani, J.-L. Sauvageot, \emph{Derivations as square roots of Dirichlet forms},
J. Funct. Anal. 201 (2003), 78-120.


\bibitem{CS09}
F. Cipriani, J.-L. Sauvageot, \emph{Fredholm modules on p.c.f. self-similar fractals and their conformal geometry},
Comm. Math. Phys. 286 (2009), 541-558.

\bibitem{C94}
A. Connes, \emph{Noncommutative Geometry}, Academic Press, 1994.

\bibitem{DRS} J. DeGrado, L.G. Rogers and R.S. Strichartz,  
\emph{Gradients of Laplacian eigenfunctions on the Sierpinski gasket.}  Proc. Amer. Math. Soc.  {\bf137}  (2009),  531–540.  


\bibitem{Dix}
J. Dixmier, \emph{Von Neumann Algebras},
North-Holland Math. Lib. 27, North-Holland, Amsterdam, 1981.

\bibitem{Dudley}
R.M. Dudley, \emph{Real Analysis and Probability},
Cambridge studies in adv. math. {\bf 74}, Cambridgr Univ. Press, Cambridge, 2002.

\bibitem{Eb99}
A. Eberle, \emph{Uniqueness and non-uniqueness of semigroups generated by singular diffusion operators}, Springer Lect. notes math. {\bf 1718}, Springer, New York, 1999.

\bibitem{FOT94}
M. Fukushima, Y. Oshima and M. Takeda, \textit{Dirichlet forms and symmetric Markov processes},
deGruyter, Berlin, New York, 1994.

\bibitem{FLW}
R. Frank, D. Lenz, D. Wingert, \emph{Intrinsic metrics for non-local symmetric Dirichlet forms and 
applications to spectral theory},
to appear in J. Funct. Anal. (2012)


\bibitem{GT}J.S.\,Geronimo and A. Teplyaev, 
\emph{A difference equation arising from the trigonometric moment
problem having random reflection coefficients -- an
operator-theoretic approach}, J. Funct. Anal.
{\bf123} (1994), 12--45.


\bibitem{GVF}
J. Gracia-Bondia, J. V\'arilly, H. Figueroa, \emph{Elements of Noncommutative Geometry}, Birkh\"auser, 2001.

\bibitem{GI03}
D. Guido, T. Isola, \emph{Dimensions and singular traces for spectral triples, with applications to fractals},
J. Funct. Anal, 203 (2003), 362-400. 


\bibitem{Hino03}
M. Hino, \emph{On singularity of energy measures on self-similar sets}.
Probab. Theory Relat. Fields 132 (2005), 265-290.

\bibitem{Hino05}
M. Hino, K. Nakahara, \emph{On singularity of energy measures on self-similar sets II}.
Bull. London Math. Soc. {\bf38} (2006), 1019-1032.

\bibitem{Hino08}
M. Hino, \emph{Martingale dimensions for fractals}.
 Ann. Probab. {\bf36} (2008),   971--991.

\bibitem{Hino10}
M. Hino, \emph{Energy measures and indices of Dirichlet forms, with applications to derivatives on some fractals}. Proc. Lond. Math. Soc. (3) {\bf100} (2010),  269--302. 


\bibitem{HRT}
M. Hinz, M. R\"ockner, A. Teplyaev, \emph{Vector analysis for Dirichlet forms and quasilinear PDE and SPDE on metric measure spaces},
Stochastic Process. Appl. 
{\bf123}  (2013),   4373--4406.

\bibitem{HTb}
M. Hinz, A. Teplyaev, \emph{Dirac and magnetic Schr\"odinger operators on fractals},
J. Funct. Anal. {\bf265} (2013),  2830--2854. 

\bibitem{HTa}
M. Hinz, A. Teplyaev, \emph{Local Dirichlet forms, Hodge theory, and the 
Navier-Stokes equations on topologically one-dimensional  fractals},
to appear in 
Trans. Amer. Math. Soc.  \ arxiv:1206.6644

\bibitem{Hi03}
F. Hirsch, \emph{Intrinsic metrics and Lipschitz functions},
J. Evol. Eq. 3 (2003), 11-25.

\bibitem{IRT}
M. Ionescu, L. Rogers, A. Teplyaev, \emph{Derivations, Dirichlet forms and spectral analysis},
(2010) to appear in the Journal of Functional Analysis.

\bibitem{JSch99}
N. Jacob, R. Schilling, \emph{Some Dirichlet spaces obtained by subordinate reflected diffusions}, Rev. Mat. Iberoam. 15 (1999), 59--91.

\bibitem{Ka12}
N. Kajino, \emph{Heat kernel asymptotics for the measurable Riemannian structure on the Sierpinski gasket},
Pot. Anal. {\bf 36} (2012), 67-115.

\bibitem{Kig93}
\emph{Harmonic metric and Dirichlet form on the Sierpi\'nski gasket.}\
 Asymptotic
problems in probability theory: stochastic models and
diffusions on fractals (Sanda/Kyoto, 1990), 201--218,
Pitman Res. Notes Math. Ser., \textbf{283},
Longman Sci. Tech., Harlow, 1993.

\bibitem{Kig95}
J. Kigami, \emph{Harmonic calculus on limits of networks and its applications to dendrites},
J. Funct. Anal. 128 (1995), 48--86.

\bibitem{Kig01}
J. Kigami, \emph{Analysis on Fractals}, Cambridge Univ. Press, Cambridge, 2001.

\bibitem{Kig03}
J. Kigami, \emph{Harmonic analysis for resistance forms}, J. Funct. Anal. 204 (2003), 525--544.

\bibitem{Kig08}
J. Kigami, \emph{Measurable Riemannian geometry on the Sierpinski gasket: the Kusuoka measure and the Gaussian heat kernel estimate}, Math. Ann. 340 (2008), 781-804.

\bibitem{Kig12}
J. Kigami, \emph{Resistance forms, quasisymmetric maps and heat kernel estimates}, to appear in Memoirs of the AMS.

\bibitem{KZh}
P. Koskela, X. Zhou, \emph{Geometry and analysis of Dirichlet forms}. 
 Adv. Math. {\bf231} (2012), 
  2755--2801. 

\bibitem{K02}
T. Kumagai, \emph{Some remarks for jump processes on fractals},
In: Grabner, Woess (Eds.), Trends in mathematics: Fractals in Graz 2001, Birkh\"{a}user, Basel.

\bibitem{Ku89}
S. Kusuoka, \emph{Dirichlet forms on fractals and products of random matrices},
Publ. Res. Inst. Math. Sci. 25 (1989), 659-680.

\bibitem{La07}
F. Latr\'emoli\`ere, \emph{Bounded-Lipschitz distances on the state space of a $C^\ast$-algebra}. Taiwanese J. Math. {\bf 11} (2007), 447-469.

\bibitem{Lat}
F. Latr\'emoli\`ere, \emph{Quantum locally compact metric spaces}.  J. Funct. Anal.  {\bf 264} (2013),  362-402.

\bibitem{LJ78}
Y. LeJan, \emph{Mesures associ\'ees \`a une forme de Dirichlet. Applications.},
Bull. Soc. Math. France 106 (1978), 61-112.


\bibitem{MR92}
Z.-M. Ma, M. R\"ockner, \emph{Introduction to the Theory of Non-Symmetric Dirichlet Forms}, 
Universitext, Springer, Berlin, 1992.


\bibitem{MT} L. Malozemov and A. Teplyaev, \emph{Pure point spectrum of
the Laplacians on fractal graphs.}\ J. Funct. Anal. \textbf{129}
(1995),  390--405.



\bibitem{OR05}
N. Ozawa, M.A. Rieffel, \emph{Hyperbolic group $C^\ast$-algebras and free-product $C^\ast$-algebras as compact quantum metric spaces},
Canad. J. Math. 57 (5) (2005), 1056-1079.

\bibitem{PaB}
I. Palmer, J. Bellissard, \emph{Uniquely ergodic minimal tiling spaces with positive entropy},
preprint %
arXiv: 0906.2997 

\bibitem{Pa98}
B. Pavlovi\'c, \emph{Defining metric spaces via operators from unital $C^\ast$-algebras},
Pacific J. Math. 186 (2) (1998), 285-313.

\bibitem{PB}
J. Pearson, J. Bellissard, \emph{Noncommutative Riemannian geometry and diffusion on ultrametric cantor sets},
J. Noncommut. Geo 3 (2009), 447-480.

\bibitem{PT} 
A. Pelander and A. Teplyaev, \emph{Infinite dimensional
i.f.s. and smooth functions on the Sierpinski gasket,}  Indiana Univ. Math. J. {\bf 56} (2007), 1377–-1404. 


\bibitem{RSI}
M. Reed, B. Simon, \emph{Methods of Modern Mathematical Physics I: Functional Analysis}, Acad. Press, San Diego 1980.

\bibitem{RSIV}
M. Reed, B. Simon, \emph{Methods of Modern Mathematical Physics IV: Analysis of Operators}, Acad. Press, San Diego, 1978.

\bibitem{R98}
M.A. Rieffel, \emph{Metrics on states from actions of compact groups},
Documenta Math. {\bf 3} (1998), 215-229.

\bibitem{R99}
M.A. Rieffel, \emph{Metrics on state spaces},
Doc. Math. 4 (1999), 559-600. 

\bibitem{Sab} C. Sabot,
\emph{Pure point spectrum for the Laplacian on unbounded nested fractals.}\
J. Funct. Anal.  {\bf173}   (2000),   497--524.


\bibitem{Sau89}
J.-L. Sauvageot, \emph{Tangent bimodule and locality for dissipative operators on $C^\ast$-algebras}, in: 'Quantum Probability and Applications IV', Springer Lect. Notes. Math. {\bf 1396}, pp. 322-338, Springer, New York, 1989.

\bibitem{Sau90}
J.-L. Sauvageot, \emph{Quantum Dirichlet forms, differential calculus and semigroups}, in: 'Quantum Probability and Applications V', Springer Lect. Notes Math. {\bf 1442}, pp. 334-346, Springer, New York, 1990.


 \bibitem{SW} B. Simon and T. Wolff,
\emph{Singular continuous spectrum under rank one perturbations
and localization for random Hamiltonians.}\
Comm. Pure Appl. Math. \textbf{39} (1986), 75--90.

 \bibitem{S-OPUC} B. Simon,
\emph{Orthogonal polynomials on the unit circle. Part 1. Classical theory. Part 2. Spectral theory}. American Mathematical Society Colloquium Publications {\bf54} 2005. 


\bibitem{Sto10}
P. Stollmann, \emph{A dual characterization of length spaces with application to Dirichlet forms},
Studia Math. 198 (3) (2010), 221-233. 



\bibitem{Str} R. S. Strichartz,
\emph{Taylor approximations on Sierpinski type fractals.}
J. Funct. Anal.
\textbf{174} (2000), 76--127.

\bibitem{St03} R. S. Strichartz, \emph{Fractafolds based on the Sierpinski gasket 
and their spectra.}  Trans. Amer. Math. Soc. {\bf355} (2003), 4019--4043.

\bibitem{ST}  R.S. Strichartz and  A. Teplyaev, \emph{Spectral analysis on infinite Sierpinski fractafolds}. 
Journal d'Analyse Mathematique, {\bf112} (2012) 255-297.




\bibitem{Stu94}
K.-Th. Sturm, \emph{Analysis on local Dirichlet spaces - I. Recurrence, conservativeness and $L^p$-Liouville properies},
J. reine angew. Math. 456 (1994), 173-196. 

\bibitem{Stu95}
K.-Th. Sturm, \emph{On the geometry defind by Dirichlet forms},
In: Progress in Probability, Vol. 36, Birkh\"auser, Basel 1995, p. 231-242.

\bibitem{Tak}
M. Takesaki, \emph{Theory of Operator Algebras I},
Encycl. Math. Sci. 124, Springer, New York, 2002.



\bibitem{T3} \mbox{A. Teplyaev\hskip-.1ex,}
\emph{The pure point spectrum of random orthogonal
polynomials on the circle.}\
Soviet Math. Dokl. {\bf44} (1992), 407--411.

\bibitem{T98} \mbox{A. Teplyaev,} \emph{Spectral Analysis on Infinite
Sierpi\'nski Gaskets,}\, J. Funct. Anal., \textbf{159} (1998),
537-567.

\bibitem{T00} \mbox{A. Teplyaev,} \emph{Gradients on fractals.}\ J. Funct.
Anal. \textbf{174} (2000) 128--154.



\bibitem{T08}
A. Teplyaev, \emph{Harmonic coordinates on fractals with finitely ramified cell structure}, Canad. J. Math. 60 (2008), 457-480.

\bibitem{Wa00}
F.-Y. Wang, \emph{Functional inequalities for empty essential spectrum}, J. Funct. Anal. {\bf 170} (2000), 219-245.

\bibitem{W99}
N. Weaver, \emph{Lipschitz Algebras},
World Scientific, London, 1999.

\bibitem{W00}
N. Weaver, \emph{Lipschitz algebras and derivations II. Exterior differentiation}, J. Funct. Anal. {\bf 178} (2000), 64-112.

\bibitem{Wei80}
J. Weidmann, \emph{Linear operators in Hilbert spaces},
Graduate Texts in Mathematics {\bf 68}, Springer, New York, 1980.

\end{thebibliography}
\end{document}